\documentclass[11pt,reqno]{amsart}
\usepackage[utf8]{inputenc}
\usepackage{pgfplots}
\usepackage{fancyhdr}
\usepackage{tikz}
\usepackage{systeme}
\usepackage{bbm}
\usepackage{graphicx}
\usepackage[rightcaption]{sidecap}
\graphicspath{ {images/} }
\usepackage[utf8]{inputenc}
\usepackage{amsmath,dsfont,amssymb,mathrsfs,amsthm,hyperref,color}
\usepackage{graphicx}
\usepackage[english]{babel}
\usepackage{bbm}
 \usepackage{esint}
\usepackage{pst-node}
\usepackage{tikz-cd} 
\usepackage{enumitem}

\usepackage{tikz}
\usetikzlibrary{arrows.meta,calc}

\newtheorem{theorem}{Theorem}[section]
\newtheorem{corollary}[theorem]{Corollary}
\newtheorem{lemma}[theorem]{Lemma}
\newtheorem{proposition}[theorem]{Proposition}
\theoremstyle{definition}
\newtheorem{definition}[theorem]{Definition}

\theoremstyle{remark}    
\newtheorem{remark}[theorem]{Remark}

\title[Path-Space OT with Interactions: Kinetic Equations and Eikonal Structure] {Path-Space Optimal Transport with Interactions: Kinetic Equations and Eikonal Structure}

     \author{Rene Cabrera}
    \thanks{}
     \address{Department of Mathematics\\
The University of Texas at Austin\\
2515 Speedway Stop C1200\\
Austin, Texas 78712-1202}
\email{rene.cabrera@math.utexas.edu}
\begin{document}
 \maketitle 
\begin{abstract}
We study a dynamical optimal transport problem on path-space with kinetic cost and nonlocal interaction. For a Gaussian interaction kernel, cyclical monotonicity and the regularity of the effective endpoint cost yield a conservative vector field associated with the initial momentum of the optimal trajectories. Separately, the Gaussian interaction potential $W_{\pi_0}$ satisfies a uniform gradient estimate and, under a quantitative condition on the interaction strength, is a classical and hence viscosity subsolution of an eikonal equation. We also derive the Euler--Lagrange dynamics directly from path-space optimality by means of endpoint-preserving perturbations of the optimal path measure. The associated phase-space marginals satisfy a measure-valued Liouville-type, or nonlocal kinetic, equation driven by the smooth Gaussian self-consistent force. Conversely, a superposition principle lifts suitable measure-valued solutions of the phase-space continuity equation to measures on phase-space trajectories. Thus path-space optimal transport provides a variational connection between endpoint geometry, eikonal behavior of the Gaussian interaction potential, and kinetic mean-field dynamics.
\end{abstract}
 \tableofcontents
\section{Introduction and Main results}
\label{sec}

The purpose of this paper is to connect an interacting optimal transport
problem on path space with three closely related structures generated by the
same optimal path measure: an endpoint geometric structure, an eikonal
subsolution associated with the Gaussian interaction potential, and a measure-valued kinetic evolution in phase-space. We specialize the interaction to
the smooth Gaussian kernel. Furthermore, one of the main themes of this note is also to show how the theory of optimal transport as was developed by Brenier \cite{Brenier1991}, Gangbo-McCann \cite{gangbo1996geometry}, McCann \cite{1077246085}, Villani \cite{villani2003topics}-\cite{villani2008optimal}, and Evans \cite{evans1997monge}, just to name a few, provides a connection of optimal transport under the path-space with congestion to Kinetic theory---Vlasov type equation. 

The central focus of the paper is that the
variational structure of the path-space problem contains both an endpoint
geometry and a phase-space dynamics. On the endpoint side, an Evans-type
cyclical-monotonicity argument \cite{evans1997monge} produces a conservative vector field and an
eikonal-type structure. On the dynamical side, a first variation of the
path-space functional yields the Euler--Lagrange equation satisfied by almost
every optimal trajectory, and the corresponding position-velocity
push-forward gives a measure-valued nonlocal kinetic equation. Thus these two
PDE structures, although different in nature, have a common origin in the
same optimal probability measure on paths.

We collect the main results of this note. Let
$\pi_0\in\Pi_{\mathrm{path}}(\mu,\nu)$ be an optimal path measure for the
problem \eqref{eqn: OT paths}, and let
$\mu_t=(e_t)_{\#}\pi_0$. For the Gaussian interaction
$\mathcal K_G(z)=\lambda e^{-\beta|z|^2}$, define the induced force by
\[
K_{\pi_0}(t,x)
:=
-\nabla_xV(t,x)
-
2\lambda\beta
\int_{\mathbb R^d}
(x-y)e^{-\beta|x-y|^2}\,d\mu_t(y).
\]

\begin{theorem}
\label{thm:main-kinetic}
Let $\pi_0$ be an optimal path measure for \eqref{eqn: OT paths} with
Gaussian interaction, and suppose that the hypotheses on the path cost and
the external potential stated below are satisfied. Then
$\pi_0$-almost every optimal trajectory satisfies
\[
\ddot\gamma(t)=K_{\pi_0}(t,\gamma(t))
\]
in the distributional sense. In particular, the optimal trajectories have
sufficient regularity to define the phase-space measures
$\phi_t=(e_t,\dot e_t)_{\#}\pi_0$. These measures satisfy
\begin{align}\label{PDE Vlasov}
\partial_t\phi_t
+
\operatorname{div}_x(v\phi_t)
+
\operatorname{div}_v
\bigl(K_{\pi_0}(t,x)\phi_t\bigr)
=0
\end{align}
in the sense of distributions.

If, in addition, the initial phase-space measure is absolutely continuous with respect to Lebesgue,
$\phi_0=f_0\,\mathcal L^{2d}$, then absolute continuity is propagated by the
characteristic flow. Hence
$\phi_t=f_t\,\mathcal L^{2d}$, and the density satisfies
\begin{align}\label{PDE Liouville}
\partial_t f
+
v\cdot\nabla_x f
+
K_{\pi_0}(t,x)\cdot\nabla_v f
=0
\end{align}
in the distributional sense. Moreover, we have conservation of momentum, i.e., initial condition
$f_0\in L^p(\mathbb R^{2d})$, $1\leq p\leq\infty$, then
\[
\|f_t\|_{L^p(\mathbb R^{2d})}
=
\|f_0\|_{L^p(\mathbb R^{2d})}
\]
for every $t\in[0,1]$.
\end{theorem}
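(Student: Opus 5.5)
The plan is to work directly on path space, in three stages: derive the Euler--Lagrange equation from optimality of $\pi_0$, push the resulting dynamics forward to phase space to obtain \eqref{PDE Vlasov}, and then transport densities along the characteristic flow.

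\textbf{Stage 1: the Euler--Lagrange equation.} Take $\eta\in C_c^\infty((0,1);\mathbb R^d)$ and a bounded Borel set $A$ of paths, and consider the endpoint-preserving perturbation $\pi_\varepsilon=(\mathrm{id}+\varepsilon\eta\mathbf 1_A)_{\#}\pi_0$. Since $\eta$ vanishes at $t=0$ and $t=1$, $\pi_\varepsilon$ still belongs to $\Pi_{\mathrm{path}}(\mu,\nu)$, so optimality gives $\frac{d}{d\varepsilon}\big|_{\varepsilon=0}\mathcal C(\pi_\varepsilon)=0$. The first variation has three contributions.
\begin{itemize}
\item The kinetic part gives $\int_A\int_0^1\dot\gamma\cdot\dot\eta\,dt\,d\pi_0$.
\item The potential part gives $\int_A\int_0^1\nabla V\cdot\eta\,dt\,d\pi_0$.
\item The interaction part is quadratic in $\pi$. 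Using the symmetry of $\mathcal K_G$, its variation is $\int_A\int_0^1\int\nabla\mathcal K_G(\gamma(t)-y)\cdot\eta(t)\,d\mu_t(y)\,dt\,d\pi_0$, and this is exactly $-\int_A\int_0^1 (K_{\pi_0}+\nabla V)\cdot\eta$ once the constants are matched.
\end{itemize}
Differentiation under the integral is justified because $\nabla\mathcal K_G$ and $\nabla^2\mathcal K_G$ are bounded, together with the stated hypotheses on $V$, which I take to be $C^1$ with locally bounded gradient, and finiteness of the kinetic energy under $\pi_0$. Since $A$ is arbitrary, for $\pi_0$-a.e.\ $\gamma$ and each fixed $\eta$ we get $\int_0^1\dot\gamma\cdot\dot\eta\,dt=-\int_0^1 K_{\pi_0}(t,\gamma)\cdot\eta\,dt$. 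Running $\eta$ over a countable dense subset of $C_c^\infty$ makes the exceptional null set independent of $\eta$, which yields $\ddot\gamma=K_{\pi_0}(t,\gamma)$ distributionally.

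\textbf{Regularity.} $K_{\pi_0}$ is continuous in $x$ and, under the $V$ hypotheses, locally bounded. Hence $\ddot\gamma\in L^\infty_{loc}$, so $\gamma\in C^{1}$ and $\dot e_t$ is defined pointwise for every $t$. This makes $\phi_t$ well defined.

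\textbf{Stage 2: the kinetic equation.} For $\psi\in C_c^\infty(\mathbb R^{2d})$ I compute
\[
\frac{d}{dt}\int\psi\,d\phi_t=\int\bigl(\nabla_x\psi\cdot\dot\gamma+\nabla_v\psi\cdot\ddot\gamma\bigr)\,d\pi_0 .
\]
Substituting $\ddot\gamma=K_{\pi_0}(t,\gamma)$ gives the weak form of \eqref{PDE Vlasov}. Moving the time derivative inside the integral uses dominated convergence, which is available because $\psi$ has compact support and $K_{\pi_0}$ is locally bounded.

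\textbf{Stage 3: densities and $L^p$ norms.} The field $b=(v,K_{\pi_0}(t,x))$ is divergence free on $\mathbb R^{2d}$, since $K_{\pi_0}$ does not depend on $v$. The Gaussian part of $K_{\pi_0}$ is smooth and globally Lipschitz in $x$, uniformly in $t$. If $\nabla V$ is also Lipschitz, then $b$ generates a flow $X_t$ of measure-preserving bi-Lipschitz maps (Liouville's theorem). Phase-space uniqueness of characteristics for a Lipschitz field gives $\phi_t=(X_t)_{\#}\phi_0$, and therefore $f_t=f_0\circ X_t^{-1}$. Because $X_t$ preserves Lebesgue measure, $\|f_t\|_{L^p}=\|f_0\|_{L^p}$ for every $p\in[1,\infty]$. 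Equation \eqref{PDE Liouville} then follows from \eqref{PDE Vlasov} using $\operatorname{div}_v K_{\pi_0}=0$.

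\textbf{Main obstacle.} I expect Stage 1 to be the hardest step: rigorously differentiating the nonlocal quadratic interaction term along the perturbation, and making the a.e.\ set uniform in $\eta$. One subtlety is that $\mu_t$ itself moves with $\varepsilon$; this is exactly what produces the factor $2$ in the force, via symmetry of $\mathcal K_G$. A second subtlety is that Stage 3 needs the representation $\phi_t=(X_t)_{\#}\phi_0$, which holds because each optimal $\gamma$ solves the characteristic ODE and solutions of that ODE are unique.
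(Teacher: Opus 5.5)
Your three stages follow the paper's proof essentially step for step. The common ingredients are: the endpoint-preserving perturbation $\gamma\mapsto\gamma+\varepsilon a(\gamma)h$ (you take $a=\mathbf 1_A$); oddness of $\nabla\mathcal K_G$ to absorb the factor $\tfrac12$; a countable dense family of test functions; the chain rule along $(\gamma(t),\dot\gamma(t))$; and the divergence-free Lipschitz phase-space flow giving $\phi_t=(X_t)_{\#}\phi_0$ and $f_t=f_0\circ X_t^{-1}$.

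The one thing you must fix is the signs in Stage 1. Because the path cost contains $-V$, the potential contribution is $-\int_A\int_0^1\nabla_xV(t,\gamma(t))\cdot\eta\,dt\,d\pi_0$, not $+$. The interaction contribution you computed, $\int_A\int_0^1\nabla_xW_{\pi_0}(t,\gamma(t))\cdot\eta\,dt\,d\pi_0$, is correct, but it equals $+\int_A\int_0^1(K_{\pi_0}+\nabla_xV)\cdot\eta\,dt\,d\pi_0$, not its negative. With the signs as you wrote them, the three contributions add up to $\int_0^1\dot\gamma\cdot\dot\eta\,dt-\int_0^1K_{\pi_0}\cdot\eta\,dt$, which would yield $\ddot\gamma=-K_{\pi_0}$. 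With the corrected signs they add up to $\int_0^1\dot\gamma\cdot\dot\eta\,dt+\int_0^1K_{\pi_0}\cdot\eta\,dt$, which is exactly the identity you go on to state. The phrase ``once the constants are matched'' is hiding precisely the step that determines the direction of the force, so write it out.

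Your other deviations are harmless and in places slightly cleaner than the paper.
\begin{itemize}
\item In Stage 2 you test with $\psi\in C_c^\infty(\mathbb R^{2d})$, so compact support dominates the integrand. You therefore do not need the paper's global integrability hypothesis on $|\dot\gamma|+|K_{\pi_0}(t,\gamma)|$.
\item You obtain $\phi_t=(X_t)_{\#}\phi_0$ from ODE uniqueness along each optimal trajectory, i.e.\ $(\gamma(t),\dot\gamma(t))=X_t(\gamma(0),\dot\gamma(0))$ for $\pi_0$-a.e.\ $\gamma$. This is more elementary than the paper's appeal to uniqueness of measure solutions of the continuity equation.
\item If ``bounded'' means bounded in sup norm, localizing to such sets $A$ is what lets a merely locally bounded $\nabla_xV$ suffice in Stage 1. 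The paper instead assumes $\nabla_xV\in L^\infty$.
\end{itemize}
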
 See Theorem \ref{thm:gaussian-density-propagation}.

Conversely, the kinetic description can be lifted back to path space. In other words, a kinetic solution admits a superposition by characteristic curves, and projecting these characteristics recovers a path-space measure whose phase-space marginals are precisely $\phi_t$.

\begin{theorem}\label{thm:phase-space-superposition-intro} Let $(\phi_t)_{t\in[0,1]}$ be a family of probability meausres in $\mathbb{R}^{2d}$ satisfy \[ \partial_t\phi_t+ \operatorname{div}_{(x,v)} \bigl((v,K(t,x))\phi_t\bigr)=0. \] Under suitable integrability assumptions, there exists a probability measure $\eta_{\textbf{ph}}$ concentrated on characteristics, for position $X(t,x)$ and velocity $V(t,x)$ of the dynamics, \[ \dot X=V,\qquad \dot V=K(t,X), \] whose time marginals are $\phi_t$. Moreover, projecting $\eta_{\textbf{ph}}\in\mathcal{P}(AC([0,1];\mathbb{R}^{2d}))$ onto path-space yields a measure $\pi$ satisfying 
\begin{align}\label{eq:phase-marginal-from-path-measure}
(e_t,\dot e_t)_{\#}\pi=\phi_t, \qquad t\in[0,1].
\end{align}
\end{theorem}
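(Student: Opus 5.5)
The plan is to reduce the statement to the Ambrosio superposition principle for continuity equations in $\mathbb R^{2d}$, applied to the phase-space vector field $b(t,x,v):=(v,K(t,x))$. The interpretation of ``suitable integrability'' is the standard one:
\[
\int_0^1\!\!\int_{\mathbb R^{2d}} |b(t,x,v)|\,d\phi_t(x,v)\,dt=\int_0^1\!\!\int_{\mathbb R^{2d}} \bigl(|v|+|K(t,x)|\bigr)\,d\phi_t\,dt<\infty,
\]
with $t\mapsto\phi_t$ narrowly continuous. For the Gaussian force $K_{\pi_0}$ the interaction part is bounded by $2\lambda\beta\sup_z|z|e^{-\beta|z|^2}=\lambda\sqrt{2\beta}e^{-1/2}$. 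The condition therefore reduces to a first moment bound in $v$ together with integrability of $\nabla_x V$ along $\phi_t$, which I would list explicitly as hypotheses.

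The first step verifies the hypotheses of the superposition principle. Then $\phi_t$ is a distributional solution of $\partial_t\phi_t+\operatorname{div}_{(x,v)}(b\phi_t)=0$, so the theorem gives a probability measure $\eta_{\textbf{ph}}$ on $C([0,1];\mathbb R^{2d})$ with two properties. It is concentrated on absolutely continuous curves $z(t)=(X(t),W(t))$ solving $\dot z=b(t,z)$ for a.e.\ $t$, that is $\dot X=W$ and $\dot W=K(t,X)$. Its time marginals satisfy $(e_t)_\#\eta_{\textbf{ph}}=\phi_t$ for all $t$. Here the letter $W$ denotes the velocity, to avoid a clash with the potential $V$.

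The second step builds the path measure. Define $P:C([0,1];\mathbb R^{2d})\to C([0,1];\mathbb R^d)$ by $P(X,W)=X$ and set $\pi:=P_\#\eta_{\textbf{ph}}$. For $\eta_{\textbf{ph}}$-a.e.\ curve, $\dot X=W$ holds a.e.\ with $W$ continuous. Hence $X\in C^1$ with $\dot X(t)=W(t)$ for every $t$, so $\dot e_t$ is well defined on the support of $\pi$. This gives
\[
(e_t,\dot e_t)\circ P=e_t \quad \eta_{\textbf{ph}}\text{-a.s.},
\]
and therefore
\[
(e_t,\dot e_t)_\#\pi=(e_t)_\#\eta_{\textbf{ph}}=\phi_t,
\]
which is \eqref{eq:phase-marginal-from-path-measure}. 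One small measurability point remains: $\dot e_t$ should be defined as a Borel map on $C^1$ curves, for example as a limit of difference quotients, and the set of $C^1$ curves should be shown Borel. Both are routine.

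I expect the main obstacle to be the first step, namely ensuring the integrability bound and the a.e.\ meaning of the ODE when $K$ is only measurable in $x$. Ambrosio's theorem needs only the $L^1$ bound above, with no regularity of $b$, so the main work is to state the integrability hypotheses precisely. If $K=K_{\pi_0}$ with $\nabla V$ locally Lipschitz, I would also note that $b$ is locally Lipschitz. In that case the characteristics are unique, and $\eta_{\textbf{ph}}$ is the push-forward of $\phi_0$ by the flow, provided the flow is global, for instance under sublinear growth of $\nabla V$. This gives a more explicit representation, but it is not needed for the existence statement.
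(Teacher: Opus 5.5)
Your proposal is correct and follows essentially the same route as the paper's proof. You apply the superposition principle on $\mathbb R^{2d}$ to $B(t,x,v)=(v,K(t,x))$ under the bound $\int_0^1\int(|v|+|K|)\,d\phi_t\,dt<\infty$ and narrow continuity, push forward by $P(X,V)=X$, and use $X\in C^1$ with $\dot X=V$ everywhere to get $(e_t,\dot e_t)\circ P=\operatorname{ev}_t$, hence $(e_t,\dot e_t)_\#\pi=\phi_t$. Two small differences: you get continuity of the velocity directly from absolute continuity of the integral curves, which makes the paper's Tonelli step unnecessary; and your Borel-measurability remark on $\dot e_t$ covers a point the paper leaves implicit.
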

(See Theorem \ref{thm:phase-space-superposition} for the full statement ).

The third main result concerns the eikonal structure associated with the endpoint map generated by the solution of the interacting path-space optimal transport problem. In particular, the cyclical geometric argument, stemming from a solution to the optimal transport problem, introduced by Evans in the derivation of the eikonal equation is adapted to our setting in the second main theorem \cite{evans1997monge}. In contrast with Theorem~\ref{thm:main-kinetic}, which is obtained from variations of the optimal trajectories and describes their phase-space dynamics, the following result is obtained from the endpoint geometry of the same optimal path measure. 
\begin{theorem} \label{thm:main-eikonal} Let $\pi_0$ be a solution of the interacting path-space optimal transport problem \eqref{eqn: OT paths}, where the interaction is induced by the Gaussian kernel \[ \mathcal K_G(z)=\lambda e^{-\beta|z|^2}, \qquad \lambda,\beta>0. \] Assume the hypotheses on $\mu$, $\nu$, the path cost $c$, the external potential $V$, and the Gaussian interaction imposed below. Let \[ \pi_0=(x\mapsto\gamma_x)_{\#}\mu \] be the corresponding deterministic representation of the optimal path measure, and let \[ T^*(x):=\gamma_x(1) \] denote the induced optimal endpoint map. Then the scalar potential associated with the endpoint transport field generated by $T^*$ is a viscosity subsolution of the eikonal equation \begin{equation}\label{eqn: eikonal} |\nabla u(x)|=1 \end{equation} on the domain specified below, in the viscosity sense. 
\end{theorem}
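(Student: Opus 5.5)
Following Evans' argument \cite{evans1997monge}, the proof has two stages: first build a scalar potential $u$ from the endpoint geometry, then show that $u$ is $1$-Lipschitz with respect to the Euclidean distance. A $1$-Lipschitz function satisfies $|\nabla u|\le 1$ a.e., and, where it is $C^1$ (or after a semiconcavity argument), it satisfies the eikonal inequality in the viscosity sense.

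\textbf{Step 1: the effective endpoint cost.} Define
\[
c_{\mathrm{eff}}(x,y):=\inf\{\text{action of }\gamma:\ \gamma(0)=x,\ \gamma(1)=y\},
\]
where the action consists of the kinetic term, the external potential $V$, and the Gaussian interaction frozen along $\mu_t=(e_t)_{\#}\pi_0$. Because $\mathcal K_G$ is smooth with bounded derivatives, the frozen interaction potential $W_{\pi_0}(t,x)=\int \mathcal K_G(x-y)\,d\mu_t(y)$ satisfies
\[
\|\nabla W_{\pi_0}\|_\infty\le \lambda\sqrt{2\beta/e}.
\]
This is the uniform gradient estimate mentioned in the abstract. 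I will use it to show that $c_{\mathrm{eff}}$ is locally Lipschitz and semiconcave in $x$ on the relevant domain.

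\textbf{Step 2: cyclical monotonicity and the potential.} Optimality of $\pi_0$ against endpoint-preserving rearrangements implies that $\operatorname{supp}(\mathrm{id},T^*)_{\#}\mu$ is $c_{\mathrm{eff}}$-cyclically monotone. Evans' construction then gives a candidate potential
\[
u(x)=\inf_{N}\ \inf_{x_1,\dots,x_N\in\operatorname{spt}\mu}\ \sum_{k=0}^{N}\Bigl(c_{\mathrm{eff}}\bigl(x_{k+1},T^*(x_k)\bigr)-c_{\mathrm{eff}}\bigl(x_k,T^*(x_k)\bigr)\Bigr),\qquad x_{N+1}=x.
\]
Cyclical monotonicity makes $u$ finite, and $u$ is $c$-concave. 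Differentiating $c_{\mathrm{eff}}(\cdot,T^*(x))$ in its first variable gives $\nabla u(x)=-\nabla_xc_{\mathrm{eff}}(x,T^*(x))=\dot\gamma_x(0)$ at points of differentiability. This is the conservative initial-momentum field.

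\textbf{Step 3: the Lipschitz bound.} The bound $|u(x)-u(x')|\le|x-x'|$ follows from the Lipschitz constant of $c_{\mathrm{eff}}$ in its first variable. That constant is governed by the initial speed $|\dot\gamma_x(0)|$, so it suffices to show $|\dot\gamma_x(0)|\le 1$. Here the hypotheses on $\mu,\nu$ and the quantitative condition on $\lambda$ enter. The ingredients are:
\begin{itemize}
\item the Euler--Lagrange equation $\ddot\gamma=K_{\pi_0}$ (Theorem~\ref{thm:main-kinetic});
\item the bound $|K_{\pi_0}|\le \|\nabla V\|_\infty+\lambda\sqrt{2\beta/e}$;
\item the endpoint constraint $|T^*(x)-x|\le \operatorname{diam}$.
\end{itemize}
Integrating twice gives
\[
|\dot\gamma_x(0)|\le |T^*(x)-x|+\tfrac12\sup|K_{\pi_0}|\le 1,
\]
provided the domain is chosen with diameter at most $1/2$ and $\|\nabla V\|_\infty+\lambda\sqrt{2\beta/e}\le 1$. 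This is how I would read ``a quantitative condition on the interaction strength.''

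\textbf{Step 4: the viscosity subsolution property.} Let $\varphi\in C^1$ touch $u$ from above at $x_0$. Then $\varphi(x)-\varphi(x_0)\ge u(x)-u(x_0)\ge-|x-x_0|$ near $x_0$. Applying the same comparison along $x=x_0-h e$ and using Step 3 gives $|\nabla\varphi(x_0)|\le 1$. In fact, $1$-Lipschitz functions are always viscosity subsolutions of $|\nabla u|-1\le 0$, so this step is routine once Step 3 holds.

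\textbf{Main obstacle.} I expect Step 3 to be the hardest: converting the uniform force bound into the sharp constant $1$ on the Lipschitz constant of $u$. I would first try the direct ODE estimate above. If that is too lossy, I would instead normalize the domain so that the bound holds, and state the result for the rescaled potential.
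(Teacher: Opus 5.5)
Your argument is essentially sound for the object you chose, but the paper's proof handles a different object, by a different mechanism.

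\textbf{What the paper proves.} The eikonal subsolution is Corollary~\ref{viscosity solution}, and it concerns the Gaussian interaction potential $W_{\pi_0}(t,\cdot)=\mathcal K_G*\mu_t$. Lemma~\ref{lem:gaussian-force-regularity} gives $\|\nabla_xW_{\pi_0}(t,\cdot)\|_{L^\infty}\le\lambda\sqrt{2\beta/e}$ uniformly in the measure. So under $\lambda<\sqrt{e/(2\beta)}$ this smooth function is a classical, hence viscosity, subsolution of \eqref{eqn: eikonal} on all of $\mathbb R^d$. Neither $V$, nor $T^*$, nor cyclical monotonicity, nor any domain restriction enters; your Step~1 already contains the only estimate the paper uses. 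The Evans part of the paper only produces the conservative potential $\varphi$ with $\nabla\varphi=D_x\widetilde c_{\pi_0}(x,T^*(x))=-\dot\gamma_x(0)$ (Theorem~\ref{thm:conservative initial momentum}). The paper never bounds $|\dot\gamma_x(0)|$, and it remarks that $\varphi\neq W_{\pi_0}$ in general.

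\textbf{What your route buys.} You read the statement literally and construct $\varphi$ by Rockafellar's chain formula. This is more robust than the paper's zero-circulation limit, which needs a continuous field along closed curves with $(r(s),T^*(r(s)))\in\operatorname{spt}(\eta_0)$. You then make $\varphi$ $1$-Lipschitz via $|\dot\gamma(0)|\le|\gamma(1)-\gamma(0)|+\tfrac12\|\ddot\gamma\|_\infty$. This gives a genuine link between $T^*$ and the eikonal inequality, which the paper does not provide. However, it needs hypotheses the paper never imposes: your diameter bound and $\|\nabla V\|_\infty+\lambda\sqrt{2\beta/e}\le1$.

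Some displacement bound of this kind is unavoidable for $\varphi$. Take $V=0$ and let $\nu$ be the translate of $\mu$ by a vector $a$. Then $\int(T^*(x)-x)\,d\mu=a$, so $|T^*(x)-x|\ge|a|$ on a set of positive $\mu$-measure. Since $|\ddot\gamma_x|\le\lambda\sqrt{2\beta/e}<1$, one gets $|\nabla\varphi(x)|=|\dot\gamma_x(0)|>|a|-\tfrac12\ge1$ there once $|a|\ge\tfrac32$, so $\varphi$ is not a subsolution there. The paper's condition $\lambda<\sqrt{e/(2\beta)}$ alone therefore could not yield your conclusion for $\varphi$.

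\textbf{Repairs in Step~3.} First, the Lipschitz constant of $c_{\mathrm{eff}}(\cdot,T^*(x'))$ on the segment $[x,x']$ is controlled by the initial speeds of the minimizers $\gamma_{z,T^*(x')}$, $z\in[x,x']$, not by $\dot\gamma_x(0)$. These are not $\pi_0$-trajectories, so Theorem~\ref{thm:main-kinetic} does not apply to them directly. They do solve the same frozen Euler--Lagrange equation $\ddot\gamma=K_{\pi_0}(t,\gamma)$ by the classical single-path argument, so your estimate goes through. For that, the diameter bound must be imposed on a convex set containing $\operatorname{spt}\mu\cup\operatorname{spt}\nu$, not only on $|T^*(x)-x|$.

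Second, your chain formula yields $u(x)\le u(x')+c_{\mathrm{eff}}(x,T^*(x'))-c_{\mathrm{eff}}(x',T^*(x'))$ for $x'\in\operatorname{spt}\mu$. Hence $\nabla u=+D_xc_{\mathrm{eff}}(x,T^*(x))=-\dot\gamma_x(0)$, which is the paper's sign; the slip is harmless for $|\nabla u|$.
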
 

The previous assertions have introduced three different measures that will be instrumental in this manuscript. Firstly, the end-point coupling $\eta^{*}$ is an optimal measure of the end-points of a cost, $c(x,y)$. That is, $\eta^{*}$ optimizes  \eqref{end-point cost} from the initial point $x$ to final point $y$. In particular, the deterministic mapping, i.e., $y=T^{*}(x)$ is such that $\eta^{*}=(\text{id}, T^{*})_{\#}$, and $T^{*}$ solves \eqref{eq:OT}, and hence \eqref{optimal solution} \cite{cabrera2021optimal}. In summary, $\eta^{*}$ is the optimal endpoint coupling. 

The novel phase-space trajectory measure, $\eta_{\textbf{ph}}$ is introduced by the super position principle. The measure $\eta_{\textbf{ph}}$ lies in $\mathcal{P}(AC([0, 1];\mathbb{R}^{2d}))$. Elements in this space are entire phase-space trajectory determined by the dynmics of a particles position and velocity for some time $t$. That is, the elements are determined by trajectories, $Z(t)=(X(t), V(t))$. Thus $\eta_{\mathrm{ph}}$ does not describe where particles are merely at one time. It gives a probability distribution over complete position-velocity histories. 
For example, one trajectory in its support looks like $ t\longmapsto Z(t) = (X(t),V(t)) $
with
$\dot X(t)=V(t), \qquad \dot V(t)=K(t,X(t))$.

At each time $t$, one evaluates these trajectories using
$ \operatorname{ev}_t(X,V) = (X(t),V(t))$. The superposition principle says
$ (\operatorname{ev}_t)_\#\eta_{\mathrm{ph}} = \phi_t. $ Therefore, $\eta_{\mathrm{ph}}$ is probability measure on entire position--velocity trajectories.

On the other hand, $\pi$ is the position-path measure. Starting with $\eta_{\mathrm{ph}}$, forget the explicit velocity component. Define $P(X,V)=X$. Then
$ \pi:=P_\#\eta_{\mathrm{ph}}$. Thus the map is 
$ (X(\cdot),V(\cdot)) \overset{P}{\longmapsto} X(\cdot). $

So whereas $\eta_{\mathrm{ph}}$ is a measure on the space of absolutely continuous paths
$ AC([0,1];\mathbb R^{2d}), $
the resulting $\pi$ is a measure on position paths. After establishing regularity, see section \ref{subsec:gaussian-effective-action},  $X\in C^1([0,1];\mathbb R^d)$; so on can regard
$\pi\in \mathcal P(C^1([0,1];\mathbb R^d))$.
Hence, $\pi$ is the probability measure on position trajectories. For all intents and purposes, $\pi$ reconstructs the converse theorem (Theorem \ref{thm:phase-space-superposition}) which is not automatically the optimal $\pi_{0}$. It is a path measure realizing the given \((\phi_t)\). 

Finally, fix time \(t\).
We no longer want the entire trajectory. The question is such that:
What is the joint distribution of position and velocity at this particular time? That is $ \phi_t\in\mathcal P(\mathbb R^{2d}). $ Its variables are $(x,v)$. There are two equivalent ways to obtain this in Theorem \ref{thm:phase-space-superposition}, the converse construction.
From the phase-space trajectory measure, we have
$\phi_t = (\operatorname{ev}_t)_\#\eta_{\mathrm{ph}}$. From the position-path measure,we have $\phi_t = (e_t,\dot e_t)_\#\pi $. Why are these equal? Because
we have $P(X,V)=X$ and along these characteristics we have $ \dot X=V. $
Therefore,
$ (e_t,\dot e_t)(P(X,V)) = (X(t),\dot X(t)) = (X(t),V(t)) = \operatorname{ev}_t(X,V). $

Thus $(e_t,\dot e_t)\circ P = \operatorname{ev}_t $ and consequently $(e_t,\dot e_t)_\#\pi = (\operatorname{ev}_t)_\#\eta_{\mathrm{ph}} = \phi_t $.
In conclusion $\phi_{t}$ is the position–velocity distribution at time $t$.

\subsection{Motivation}
Theorem~\ref{thm:main-eikonal} is the endpoint PDE consequence of the path-space variational problem. Its proof is based on an adaptation of the geometric argument of Evans \cite[Section 8.1]{evans1997monge}. The essential step is to pass from optimality of $\pi_0$ on path space to an effective endpoint optimality principle. Cyclical monotonicity may then be applied to the resulting endpoint problem along closed curves. This produces the zero-circulation identity for the normalized endpoint displacement field \[ \xi(x) = \frac{T^*(x)-x}{|T^*(x)-x|}. \] The Evans argument is therefore an ingredient in the proof of Theorem~\ref{thm:main-eikonal}, rather than the main theorem itself. The two main theorems describe different consequences of the same optimizer $\pi_0$. Theorem~\ref{thm:main-kinetic} uses the dynamical information contained in the optimal trajectories to obtain the measure-valued kinetic equation \eqref{PDE Vlasov}. While Theorem \ref{thm:phase-space-superposition-intro} says that you can go from the kinetic description back to a path-space description. But, what the theorem does not say that the reconstructed $\pi$ is an optimal path measure for \eqref{eqn: OT paths}. It merely says that a kinetic solution can be represented by a measure on paths following the characteristic dynamics. Therefore it is a converse to the representation/dynamical passage in the kinetic theorem, Theorem \ref{thm:main-kinetic}, not a converse to optimality itself. Theorem~\ref{thm:main-eikonal} uses the endpoint information contained in those trajectories to obtain a viscosity subsolution of the eikonal equation \eqref{eqn: eikonal}. The central thesis of the paper is precisely that these two PDE structures have the same variational origin: the solution of the interacting optimal transport problem on path space.

Theorems \ref{thm:main-kinetic} and \ref{thm:main-eikonal} summarize the
principal contribution of the paper. The first describes the dynamics of the
optimal trajectories in phase space, while the second describes the geometry
of their endpoints. Both originate from the same path-space optimizer
$\pi_0$. The kinetic result is obtained from variations of the entire optimal
path measure, whereas the endpoint result is obtained from the effective
endpoint cost and cyclical monotonicity. This common variational origin is
the main structural connection developed in the paper.

The endpoint argument developed in this paper is motivated by the geometric approach of Evans \cite[Section 8.1]{evans1997monge}, in which cyclical monotonicity is used to obtain a zero-circulation identity along closed curves and, consequently, a potential associated with the optimal transport map. We adapt this idea to the effective endpoint problem induced by the interacting path-space optimal transport problem. In our setting, the Evans argument is an ingredient in proving that the potential associated with the optimal endpoint map satisfies the viscosity subsolution property for the eikonal equation.

It is important to distinguish the different measures generated by the
optimal path measure. The probability measure $\pi_0$ is defined on path
space and is a solution of the interacting path-space optimal transport
problem \eqref{eqn: OT paths}. Its spatial marginal at time $t$ is
\[
\mu_t:=(e_t)_{\#}\pi_0,
\]
which is obtained by projecting the path measure onto physical space through
the evaluation map $e_t(\gamma)=\gamma(t)$. In particular,
$\rho_0=\mu$ and $\rho_1=\nu$. The corresponding endpoint coupling is
\[
\eta^*:=(e_0,e_1)_{\#}\pi_0.
\]
Under the deterministic representation of the optimal path measure,
$\pi_0=(x\mapsto\gamma_x)_{\#}\mu$, this endpoint coupling takes the form
\[
\eta^*=(\operatorname{id},T^*)_{\#}\mu,
\qquad
T^*(x)=\gamma_x(1).
\]
It is this endpoint structure, together with the effective endpoint cost \eqref{end-point cost}
induced by the path-space problem \eqref{effective cost}, that is used in the cyclical-monotonicity
argument leading to the eikonal result.

A different projection of the same optimal path measure gives the kinetic
description. Once the optimal trajectories have sufficient regularity, we
define the phase-space marginals by $(e_{t},\dot{e}_{t})_{\#}\pi=\phi_{t}$. Namely, the optimal path plan $\pi_0$ satisfies \eqref{eq:phase-marginal-from-path-measure}.

Thus $\pi_0$ itself is not asserted to solve a Liouville equation. Rather,
the family of phase-space measures ${\phi_t}_{t\in[0,1]}$ generated by
$\pi_0$ satisfies the measure-valued Liouville-type equation \eqref{PDE Liouville} derived below.
In this way, the endpoint and kinetic results arise from two different
projections of the same optimal probability measure on paths.

\subsection{Related work}
A related variational connection with kinetic equations was recently investigated by Gladbach and Kepka~\cite{GladbachKepka2025}, who study variational interacting particle systems and show that critical points of the corresponding action give rise to weak solutions of a Vlasov equation. Their work also addresses relaxation, particle approximations, and dynamic interacting-particle optimal transport. The approach developed here has a different starting point and emphasis: our kinetic description stems directly from the path-space optimal transport problem with interaction and congestion. More precisely, optimality of the path measure \(\pi_0\) yields the nonlocal Euler--Lagrange dynamics \[ \ddot{\gamma}(t) = K_{\pi_0}\bigl(t,\gamma(t)\bigr) \] for \(\pi_0\)-almost every path, and the associated phase-space measures are then generated directly from the optimal path plan by \[ \phi_t = (e_t,\dot e_t)_{\#}\pi_0. \] The resulting kinetic equation is therefore obtained as the phase-space evolution induced by the optimal trajectories of the interacting path-space transport problem.
\subsection{Background on optimal transport}
Next, we briefly recall the optimal transport framework needed for this construction. Given probability measures $\mu$ on $\mathcal X$ and $\nu$ on $\mathcal Y$, the Kantorovich optimal transport problem is 
\begin{equation} \label{eq:OT}
W_c(\mu,\nu) = \inf_{\pi\in\Pi(\mu,\nu)} \int_{\mathcal X\times\mathcal Y}c(x,y)\,d\pi(x,y), 
\end{equation}
where $\Pi(\mu,\nu)$ denotes the collection of probability measures having marginals $\mu$ and $\nu$.
For $c(x,y)=\|x-y\|^p$, the $p$-th root of the
optimal value gives the $p$-Wasserstein distance.

Although the dual problem is not the principal tool used in the present
paper, it is fundamental to the geometric theory of optimal transport.
Kantorovich duality takes the form
\begin{equation}
\label{eq:duality}
W_c(\mu,\nu)
=
\sup_{\substack{\varphi\in C(\mathcal X),\,\psi\in C(\mathcal Y)\\
\varphi(x)+\psi(y)\leq c(x,y)}}
\left\{
\int_{\mathcal X}\varphi\,d\mu
+
\int_{\mathcal Y}\psi\,d\nu
\right\}.
\end{equation}
The dual formulation, together with cyclical monotonicity, provides a
fundamental characterization of optimal transport plans; see
\cite{ambrosio2013user,villani2003topics,villani2008optimal}. These ideas
underlie the relation between optimal transport and gradient structures that
motivates the endpoint part of this paper.

The classical Monge problem asks instead for a deterministic transport map.
Given $\mu,\nu\in\mathcal P(\mathbb R^d)$, one considers
\begin{equation}\label{eqn: Monge}
\mathcal M(T)
=
\int_{\mathbb R^d}c(x,T(x))\,d\mu(x).
\end{equation}
Every admissible transport map $T$ induces the Kantorovich plan
$(\operatorname{Id}\times T)_{\#}\mu$, which is concentrated on the graph of
$T$. Conversely, a transport plan concentrated on the graph of a measurable
map is induced by such a map. For the quadratic cost, Brenier
\cite{Brenier1991} proved, under the standard absolute-continuity hypotheses,
that the optimal map is the gradient of a convex function. Related
characterizations were developed by McCann \cite{1077246085}, Knott and
Smith \cite{Knott1984,Smith1992}, and others. We refer to
\cite{villani2003topics,villani2008optimal,ambrosio2013user} for the broader
theory.

The path-space problem studied here originates in the optimal transport
framework with paths and congestion developed by the author in
\cite{cabrera2021optimal,Cabrera2022}. The corresponding minimization problem
is
\begin{align}\label{eqn: OT paths}
\inf_{\pi\in\Pi_{\mathrm{path}}(\mu,\nu)}
\int_{\Omega}c(\gamma)\,d\pi(\gamma)
+
\frac{1}{2}
\iint_{\Omega\times\Omega}
\mathcal U(\gamma,\sigma)\,
d\pi(\gamma)d\pi(\sigma).
\end{align}
Here, let $I:=[0, 1]$,
$\Omega=\{\gamma:I\to\mathbb R^d:\gamma\text{ is absolutely continuous}\}$
is the path space and $\Pi_{\mathrm{path}}(\mu,\nu)$ denotes the set of
probability measures on $\Omega$ satisfying
$(e_0)_{\#}\pi=\mu$ and $(e_1)_{\#}\pi=\nu$, where
$e_t(\gamma)=\gamma(t)$ is the evaluation map. Let $X\subset\mathbb{R}^{d}$ be a simpy connected domain.

The path cost $c:\Omega\to X$ is given by the Lagrangian
\begin{align}\label{kinetic energy}
c(\gamma)
=
\begin{cases}
\displaystyle
\int_0^1
\left(
\frac12|\dot\gamma(t)|^2-V(t, \gamma(t))
\right)\,dt,
& \dot\gamma\in L^2(dt),\\[2mm]
+\infty,
& \text{otherwise},
\end{cases}
\end{align}
under the assumptions on $V$. The interaction kernels considered
in the underlying path-space theory are even and positive definite,
\begin{align}\label{kernel}
\mathcal K:\mathbb R^d\to\mathbb R,
\end{align}
and induce the path interaction
\begin{align}\label{interaction functional}
\mathcal U(\gamma,\sigma)
=
\int_0^1
\mathcal K(\gamma(t)-\sigma(t))\,dt.
\end{align}
Throughout the present paper we specialize this interaction to
$\mathcal K=\mathcal K_G$, where
$\mathcal K_G(z)=\lambda e^{-\beta|z|^2}$. 
 
The existence and structural theory for \eqref{eqn: OT paths} was developed
in \cite{cabrera2021optimal,Cabrera2022}. In particular, under the hypotheses
used there, the problem admits an optimal path measure, and under the
appropriate compactness and absolute-continuity assumptions this optimizer
has a deterministic representation
$\pi_0=(\Gamma)_{\#}\mu$, where
$\Gamma(x)=\gamma_{x,T^*(x)}$ (this is equivalent to $x\mapsto\gamma_{x}$) and $T^*_{\#}\mu=\nu$. These results provide
the starting point for the analysis carried out here.

The path-space formulation raises several natural questions that motivate
the present paper. First, if $\pi_0$ is an optimal path measure, what
evolution equation is satisfied by the position--velocity distribution
\begin{align*}
\phi_t=(e_t,\dot e_t)_{\#}\pi_0?
\end{align*}
Second, when does this measure admit a phase-space density, and what
regularity of that density is propagated by the resulting dynamics? Third,
starting instead from a family of distributions
$\{\phi_t\}_{t\in[0,1]}\subset\mathcal P(\mathbb R^{2d})$ satisfying the
appropriate continuity equation, when can it be represented by a probability
measure concentrated on characteristic trajectories? Finally, what
additional geometric information about the endpoint map can be extracted
from the same path-space optimizer?

The results of this paper answer these questions in the Gaussian setting.
The first variation of the path-space problem gives the characteristic
dynamics and hence the measure-valued kinetic equation \eqref{PDE Vlasov}.
When an initial phase-space density exists, the Gaussian characteristic flow
propagates that density and yields \eqref{PDE Liouville}. The superposition
principle provides the converse representation of suitable measure-valued
kinetic solutions by characteristic trajectories. Independently, the
effective endpoint reduction and cyclical-monotonicity argument yield the
conservative endpoint field and the associated eikonal structure. These
results together realize the central focus of this note stated at the beginning: the
endpoint geometry and the phase-space dynamics are two manifestations of the
same interacting optimal transport problem on path space.

\section{Preliminary} \label{sec. preliminary}

\subsection{Cyclical monotonicity}
We begin this section with an important definition that characterizes an optimal solution for \eqref{eq:OT}. Then incorporate this theory of \emph{cyclical monotone} sets to paths having initial and final end-points. For the classical definition see Rockafellar \cite{Rockafellar1966} - \cite{rockafellar2015convex} or for a more contemporary one see Villani's, Santambrogio's, or Ambrosio's and Gigli's account on the theory in \cite{villani2003topics}, \cite{santambrogio2015optimal}, \cite{AmbGig2013}.

Consider an admissible plan $\pi$ and let $\gamma_{i}$ be a path from initial point $\gamma_{i}(0)=x_{i}$ to final point $\gamma_{i}(1)=y_{i}$, for $i=1,\dots, N$, and be in the support of $\pi$. Now, let $\{(x_{i}, y_{i})\}_{i=1}^{N}$ be the set of pairs of endpoints of $\gamma_{i}$. The \emph{shift}, as we will call it for the moment, of the final points of paths give rise to new shifted paths $\widetilde{\gamma}$ defined by
  the shifted paths  $\{\widetilde{\gamma}_{i}\}_{i=1}^{N}$ with endpoints set $\{(x_{i}, y_{i+1})\}_{i=1}^{N}$ are such that definition \eqref{cyclical paths} holds and 
 \begin{align*}
     \widetilde{\gamma}_{i}(0)=x_{i}\quad \forall i\quad \text{and}\quad \widetilde{\gamma}_{i}(1)=y_{i+1\;\text{mod}\;N}\quad \forall i
 \end{align*}
with the convention $y_{N+1}=y_{1}$.
\begin{definition}\label{definition of cyclical sets}
We say the set $\{(x_{i},y_{i})\}\subset\mathcal{D}$ is \emph{$c$-cyclically monotone} if for all $i=1,\ldots, n$,
 \begin{align*}
     \sum_{i=1}^{n}c(x_{i}, y_{i})\leq \sum_{i=1}^{n}c(x_{i}, y_{\tau(i)}),
 \end{align*}
 for any permutation $\tau$ on $n$ letters. Note that one can think of $\tau(i)=i+1$, with the loop convention, $y_{n+1}=y_{1}$.
\end{definition}
\noindent The cyclical monotonicity definition of paths $\gamma_{i}$ is the following.

\begin{definition}\label{cyclical paths}
    Let $\gamma:[0, 1]\to\mathcal{D}$ be a continuously differentiable path such that $\gamma_{i}(0)=x_{i}$ and $\gamma_{i}(1)=y_{i}$ for $i=1,2,\ldots, n$. The set $\{x_{i}, y_{i}\}$ is $c$-cyclical monotone with respect to the paths $\gamma_{i}$ for $i=1,\ldots, n$ if
    \[
    \sum_{i=1}^{n}c(\gamma_{i})\leq\sum_{i=1}^{n}c(\widetilde{\gamma}_{i}),
    \]
    where $\gamma_{i}(0)=x_{i}$ and $\gamma_{i}(1)=y_{i}$, and $\widetilde{\gamma}_{i}(0)=x_{i}$, $\widetilde{\gamma}_{i}(1)=y_{i+1}$. The latter $\widetilde{\gamma}$ measures the shift of the path $\widetilde{\gamma}_{i}$ for each $i$ from $x_{i}$ to $y_{i+1}$. 
\end{definition}
Note that from the above definition, according to the paths $\widetilde{\gamma_{i}}$, the cyclical monotonicity of $c$ asserts that for some sufficiently small $h_{i}$ the \emph{shift} of the final points of paths give rise to new shifted paths $\widetilde{\gamma}$ defined by
  \begin{align}\label{cyclical paths}
      \widetilde{\gamma}_{i}(t)=\gamma_{i}(t)+th_{i}(t),\quad 0\leq t \leq 1,\; h_{i}(t)\neq 0\;\forall i,\quad \text{and}\quad \gamma_{i}(1)+h_{i}(1):=\gamma_{i+1}(1),
  \end{align}
while the initial points of the path remain the same, and; with the convention $\gamma_{N+1}(1)=\gamma_{1}(1)$.  

Rockafellar defined the concept of cyclical monotonicity in his 1966 paper, \emph{Characterization of the subdifferentials of convex functions} \cite{Rockafellar1966}.  Moreover, in the seminal monograph \cite{rockafellar2015convex}, he proved its foundational theorem, establishing it as a cornerstone of convex analysis and, decades later, optimal transport theory. Primarily, he used cyclical monotonicity to solve infinite dimensional linear programs. Furthermore he proved the following theorem in \cite{Rockafellar1966} (often called Rockafellar's Theorem):

\begin{theorem}[Rockafellar, 1966]
A set or multi-valued operator $S$ is cyclically monotone if and only if there exists a convex function $f$ such that $S$ is contained in the subdifferential of $f$ ($S \subseteq \partial f$).
\end{theorem}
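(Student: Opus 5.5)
The plan is to prove the two implications separately, working in $\mathbb R^d\times\mathbb R^d$. A set $S$ is cyclically monotone in the classical sense when, for every finite chain $(x_0,y_0),\dots,(x_n,y_n)\in S$ with the convention $x_{n+1}=x_0$,
\[
\sum_{i=0}^{n}\langle y_i,\,x_{i+1}-x_i\rangle\le 0 .
\]
This is exactly Definition~\ref{definition of cyclical sets} with the cost $c(x,y)=-\langle x,y\rangle$, applied to the cyclic shift $\tau(i)=i+1$. Since every permutation decomposes into cycles, cyclic shifts of subfamilies suffice.

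For the easy direction, assume $S\subseteq\partial f$ with $f$ convex. For each $(x_i,y_i)\in S$ the subgradient inequality gives $f(x_{i+1})\ge f(x_i)+\langle y_i,x_{i+1}-x_i\rangle$. Summing over $i=0,\dots,n$ around the closed cycle, the $f$-terms telescope to zero, and what remains is $\sum_i\langle y_i,x_{i+1}-x_i\rangle\le 0$. This is cyclical monotonicity.

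For the converse I use Rockafellar's explicit construction. Assume $S\ne\emptyset$ and fix a base point $(x_0,y_0)\in S$. Define
\[
f(x):=\sup\Bigl\{\langle y_n,x-x_n\rangle+\sum_{i=0}^{n-1}\langle y_i,x_{i+1}-x_i\rangle\Bigr\},
\]
where the supremum runs over all $n\ge 0$ and all finite chains $(x_1,y_1),\dots,(x_n,y_n)\in S$. For a fixed chain the expression is affine in $x$, so $f$ is a supremum of affine functions. Hence $f$ is convex and lower semicontinuous, with values in $(-\infty,+\infty]$.

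Two things remain to check.

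First, $f$ is proper. Evaluating at $x=x_0$, each chain sum closes into a cycle, so cyclical monotonicity gives $f(x_0)\le 0$. Taking the trivial chain $n=0$ gives $f(x_0)\ge 0$. Hence $f(x_0)=0$.

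Second, $S\subseteq\partial f$. Let $(x,y)\in S$ and $z\in\mathbb R^d$. For any chain ending at $(x_n,y_n)$, append $(x_{n+1},y_{n+1})=(x,y)$. The definition of $f$ then gives
\[
f(z)\ge\langle y,z-x\rangle+\Bigl[\langle y_n,x-x_n\rangle+\sum_{i=0}^{n-1}\langle y_i,x_{i+1}-x_i\rangle\Bigr].
\]
Taking the supremum over chains yields $f(z)\ge f(x)+\langle y,z-x\rangle$.

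The main obstacle is that this inequality is only meaningful when $f(x)<\infty$. I plan to get this from the same inequality with $z=x_0$: it gives $0=f(x_0)\ge f(x)+\langle y,x_0-x\rangle$, hence $f(x)<\infty$. With $f(x)$ finite, the subgradient inequality above says exactly that $y\in\partial f(x)$.

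Finally, to match the paper's convention of cost minimization, I will remark that $c$-cyclical monotonicity for $c(x,y)=\tfrac12|x-y|^2$ reduces to the inner-product form above. The reduction is that the terms $\tfrac12|x_i|^2$ and $\tfrac12|y_i|^2$ cancel under the permutation, leaving only the cross terms $-\langle x_i,y_i\rangle$. This is the bridge used later, via Brenier's theorem, for the endpoint map $T^*$.
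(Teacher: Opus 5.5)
Your argument is correct and is exactly Rockafellar's original chain-sum construction; the paper gives no proof of this statement and only cites \cite{Rockafellar1966}, so there is no alternative argument to compare with. The one caveat is your closing remark, which is not part of the proof: the paper never treats $T^*$ through the quadratic cost or Brenier's theorem (its endpoint costs are $|x-y|$ and $\widetilde c_{\pi_0}$), so the relevant tool there is the $c$-concave characterization in Theorem~\ref{thm:fundamental_OT}, not the inner-product form of Rockafellar's theorem.
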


This is a major result because it provides a necessary and sufficient condition of when a multi-valued map is cyclical monotone.  In particular, it tells us that the geometric property of cyclical monotonicity is perfectly equivalent to the analytic property of being a subgradient of a convex function. Such a function $f$ is called a \emph{potential} for the cyclically monotone operator $S$.

As a consequence of this foundational work, researchers applied Rockafellar's result to establish a complete theory to characterize optimal transport plans. To wit,  one of the main results in optimal transport theory that fully characterizes optimal plans of \eqref{eq:OT} is the following.
\begin{theorem}[Fundamental theorem of optimal transport \textbf{Theorem 1.13} in \cite{ambrosio2013user}]
\label{thm:fundamental_OT}
Assume that $c : \mathcal{X} \times \mathcal{Y} \rightarrow \mathbb{R}$ is continuous and bounded from below and let $\mu \in \mathcal{P}(\mathcal{X})$, $\nu \in \mathcal{P}(\mathcal{Y})$ be such that
\begin{align*}
c(x,y) \leq a(x) + b(y), 
\end{align*}
for some $a \in L^1(\mu)$, $b \in L^1(\nu)$. Also, let $\pi \in \Pi(\mu, \nu)$. Then the following three are equivalent:
\begin{enumerate}[label=(\roman*)]
\item the plan $\pi$ is optimal;
\item the set $\operatorname{supp}(\pi)$ is $c$-cyclically monotone;
\item there exists a $c$-concave function $\varphi$ such that $\max\{\varphi, 0\} \in L^1(\mu)$ and $\operatorname{supp}(\pi) \subset \partial^{c+}\varphi$.
\end{enumerate}
\end{theorem}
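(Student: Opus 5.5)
The statement has three implications, and the plan is to prove them in the cycle (i)$\Rightarrow$(ii)$\Rightarrow$(iii)$\Rightarrow$(i). Continuity of $c$ is used in the first step, a Rockafellar-type construction in the second, and a duality-type integration in the third. I expect the bookkeeping of integrability in the last step, where $-\infty$ values must be excluded, to be the main obstacle.

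\emph{(i)$\Rightarrow$(ii).} Argue by contradiction. Suppose there are points $(x_i,y_i)\in\operatorname{supp}(\pi)$, $i=1,\dots,n$, and a permutation $\tau$ with $\sum_i c(x_i,y_i)>\sum_i c(x_i,y_{\tau(i)})$.
\begin{enumerate}[label=(\alph*)]
\item By continuity of $c$ there are open neighbourhoods $U_i\ni x_i$ and $V_i\ni y_i$ on which the strict inequality persists uniformly, for all choices of points in the neighbourhoods.
\item Since the points lie in the support, $m_i:=\pi(U_i\times V_i)>0$. Set $\pi_i:=\pi|_{U_i\times V_i}/m_i$, with first and second marginals $\mu_i$ and $\nu_i$.
\item For small $\varepsilon>0$ define
\[
\tilde\pi:=\pi-\frac{\varepsilon}{n}\sum_i\pi_i+\frac{\varepsilon}{n}\sum_i\mu_i\otimes\nu_{\tau(i)} .
\]
This is a nonnegative measure once $\varepsilon/n\le\min_i m_i$, and it has the same marginals as $\pi$.
\item Its cost differs from that of $\pi$ by the average of
\[
\sum_i\bigl(c(x'_i,y'_{\tau(i)})-c(x'_i,y'_i)\bigr)<0
\]
over the product measure $\bigotimes_i\pi_i$. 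Hence $\tilde\pi$ has strictly smaller cost, contradicting optimality.
\end{enumerate}
Finiteness of the cost of $\pi$ follows from $c\le a\oplus b$ together with $c$ being bounded below.

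\emph{(ii)$\Rightarrow$(iii).} Fix $(x_0,y_0)\in\operatorname{supp}(\pi)$ and define
\[
\varphi(x):=\inf\Bigl\{c(x,y_N)-c(x_N,y_N)+\sum_{k=0}^{N-1}\bigl(c(x_{k+1},y_k)-c(x_k,y_k)\bigr)\Bigr\},
\]
where the infimum runs over all $N$ and all chains $(x_k,y_k)\in\operatorname{supp}(\pi)$.
\begin{enumerate}[label=(\alph*)]
\item $\varphi$ is $c$-concave, being an infimum of functions of the form $c(\cdot,y)-\text{const}$.
\item Cyclical monotonicity, applied by closing the chain back at $x_0$, gives $\varphi(x_0)\ge0$. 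The trivial chain gives $\varphi(x_0)\le0$, so $\varphi(x_0)=0$.
\item For $(\bar x,\bar y)\in\operatorname{supp}(\pi)$, appending $(\bar x,\bar y)$ to a chain gives $\varphi(x)\le c(x,\bar y)-c(\bar x,\bar y)+\varphi(\bar x)$ for all $x$. This is exactly $(\bar x,\bar y)\in\partial^{c+}\varphi$. It also shows $\varphi(\bar x)>-\infty$, since otherwise $\varphi\equiv-\infty$, contradicting $\varphi(x_0)=0$.
\item The bound $\varphi(x)\le c(x,y_0)-c(x_0,y_0)\le a(x)+b(y_0)-c(x_0,y_0)$ gives $\max\{\varphi,0\}\in L^1(\mu)$.
\end{enumerate}

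\emph{(iii)$\Rightarrow$(i).} Let $\varphi^{c}(y):=\inf_x\{c(x,y)-\varphi(x)\}$. Then $\varphi(x)+\varphi^{c}(y)\le c(x,y)$ everywhere, with equality on $\operatorname{supp}(\pi)\subset\partial^{c+}\varphi$.
\begin{enumerate}[label=(\alph*)]
\item Choose $x_1$ with $\varphi(x_1)$ finite. Then $\varphi^{c}(y)\le a(x_1)+b(y)-\varphi(x_1)$, so $\max\{\varphi^{c},0\}\in L^1(\nu)$. Consequently $\int\varphi\,d\mu$ and $\int\varphi^{c}\,d\nu$ are well defined in $[-\infty,\infty)$.
\item Integrating the equality along $\pi$ gives $\int\varphi\,d\mu+\int\varphi^{c}\,d\nu=\int c\,d\pi$. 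The right-hand side is finite because $c$ is bounded below and $c\le a\oplus b$. Hence both integrals on the left are finite.
\item For any $\tilde\pi\in\Pi(\mu,\nu)$, integrating the inequality gives
\[
\int c\,d\tilde\pi\ge\int\varphi\,d\mu+\int\varphi^{c}\,d\nu=\int c\,d\pi .
\]
This is legitimate because both potentials are now integrable and the marginals of $\tilde\pi$ are $\mu$ and $\nu$. Therefore $\pi$ is optimal.
\end{enumerate}
The delicate point is precisely the order in step (iii): one must first establish finiteness of the two integrals using the reference plan $\pi$, and only then compare with an arbitrary $\tilde\pi$.
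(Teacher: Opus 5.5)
Your cycle (i)$\Rightarrow$(ii)$\Rightarrow$(iii)$\Rightarrow$(i) is the same as in the Ambrosio--Gigli proof, which the paper cites instead of giving its own. Your steps (ii)$\Rightarrow$(iii) and (iii)$\Rightarrow$(i) are fine, including first getting finiteness of $\int\varphi\,d\mu$ and $\int\varphi^{c}\,d\nu$ from $\pi$ before comparing with other plans.

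There is a slip in step (d) of (i)$\Rightarrow$(ii). The identity $\int c\,d(\mu_i\otimes\nu_{\tau(i)})=\int c(x'_i,y'_{\tau(i)})\,d\bigl(\bigotimes_j\pi_j\bigr)$ holds only when $\tau(i)\neq i$. For a fixed point $\tau(i)=i$ you add $\mu_i\otimes\nu_i$, which is not $\pi_i$. Because the term $c(u_i,v_i)-c(u_i,v_i)$ cancels pointwise, step (a) places no restriction on $U_i\times V_i$. So $\int c\,d(\mu_i\otimes\nu_i)-\int c\,d\pi_i$ can be positive (e.g.\ $c(x,y)=|x-y|$ with $\pi_i$ concentrated on the diagonal) and need not be outweighed by the gain from the other indices.

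Either of two repairs works:
\begin{enumerate}
\item Discard the fixed points of $\tau$ beforehand. They contribute the same amount to both sides, so the strict inequality survives.
\item As Ambrosio and Gigli do, add $\frac{\varepsilon}{n}\sum_i(x'_i,y'_{\tau(i)})_\#\bigotimes_j\pi_j$ instead of $\frac{\varepsilon}{n}\sum_i\mu_i\otimes\nu_{\tau(i)}$. This has the same marginals, and its $i$-th summand equals $\pi_i$ when $\tau(i)=i$.
\end{enumerate}

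A smaller point in step (c): $\varepsilon/n\le\min_i m_i$ gives $\tilde\pi\ge0$ only when the rectangles $U_i\times V_i$ are pairwise disjoint. Points in the cycle may repeat, so take $\varepsilon\le\min_i m_i$ instead; then each $\frac{\varepsilon}{n}\pi_i\le\frac1n\pi$.
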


\begin{proof}
    For its proof and definitions therein, we refer the reader to \emph{A user's guide to optimal transport} \cite{ambrosio2013user}.
\end{proof}
In the next section, we will see how Evans used these results, coupled with conservative vector fields, to obtain a vector field which satisfies the eikonal equation.

\section{Geometry of optimal transport}\label{geometry of ot}
The material that will compose this section is drawn from Evans \cite{evans1997monge} and, in particular, from his insightful heuristic computation on the geometry of optimal transport in section 8.1 of his notes \cite{evans1997monge}, which inspired this note. In particular, we reprise the same calculation that Evans proved in order to make this note as self contained as possible.
We look at the distance cost function $c:\mathbb{R}^{d}\times\mathbb{R}^{d}\to \mathbb{R}$ defined by 
\[
c(x,y):=|x-y|.
\]
In addition, we will be concerned with the optimality of \eqref{eqn: Monge}. Brenier developed the techniques in \cite{Brenier1991} that will be discussed in this section.  In order to understand the geometric properties of solutions to $\mathcal{M}(T)$ through the cyclical monotonicity property, we will assume that $T^{*}$ is a minimizer of \eqref{eqn: Monge}. That is, there is  a one-to-one mapping $T^{*}:\mathbb{R}^{d}\to\mathbb{R}^{d}$ that pushes-forward $\mu$ onto $\nu$, among all maps $T$ with the same property that minimizes $\mathcal{M}$, and $T^{*}$ satisfies
\begin{align}\label{optimal solution}
    \mathcal{M}(T^{*})=\inf_{T_{\#}\mu=\nu}\mathcal{M}(T).
\end{align}

Fix a positive integer $N$ and consider distinct points $\{x_{k}\}_{k=1}^{N}$ contained in the support of $\mu$, $\text{spt}(\mu)$. Furthermore, suppose there exist distinct radii $\{r_k\}_{k=1}^{N}$ such that the corresponding small balls $B_k = B(x_k, r_k)$ are distinct, with $r_k > 0$ for all $k$. By adjusting if necessary, suppose that such balls have mass $\varepsilon>0$. In particular,
\begin{align}\label{measure balls}
    \mu(B_{1})=\mu(B_{2})=\cdots=\mu(B_{N})=\varepsilon.
\end{align}
Now since $T^{*}$ is a one-to-one map, then take $y_{k}=T^{*}(x_{k})$ such that $C_{k}=T^{*}(B_{k})$. By assumption, $T^{*}$ pushes-forward $\mu$ onto $\nu$, therefore
\begin{align}\label{measures on C_k}
    \nu(C_{1})=\nu(C_{2})=\cdot=\nu(C_{N})=\varepsilon.
\end{align}

Next, cyclically permuting the images of the balls, $\{B_{k}\}_{k=1}^{N}$,  gives rise to another mapping $T$ with the desired properties among all maps aiming to minimize $\mathcal{M}$. Namely, we build $T$ with the following properties
\begin{align}\label{new T}
\begin{cases}
T(x_{k})=y_{k+1}, & T(B_{k})=C_{k+1}\; (k=1,\ldots, N), \\[4pt]
T=T^{*} & \text{on}\; \text{spt}(\mu)\setminus \bigcup_{k=1}^{N}B_{k}.
\end{cases}
\end{align}

\begin{figure}[h] 
\centering 
\begin{tikzpicture}[>=stealth]\label{pic 1}
\node (x1) at (0,2.5) {$x_1$};
\draw[blue, thick] (0,2.5) circle (0.5);
\node[blue] at (-0.8,2.5) {$B_1$};

\node (x2) at (0,1) {$x_2$};
\draw[blue, thick] (0,1) circle (0.5);
\node[blue] at (-0.8,1) {$B_2$};

\node (x3) at (0,-0.5) {$x_3$};
\draw[blue, thick] (0,-0.5) circle (0.5);
\node[blue] at (-0.8,-0.5) {$B_3$};

\node (xN) at (0,-2.5) {$x_N$};
\draw[blue, thick] (0,-2.5) circle (0.5);
\node[blue] at (-0.8,-2.5) {$B_N$};

\node (y1) at (5,2.5) {$y_1$};
\draw[red, thick] (5,2.5) circle (0.5);
\node[red] at (5.8,2.5) {$C_1$};

\node (y2) at (5,1) {$y_2$};
\draw[red, thick] (5,1) circle (0.5);
\node[red] at (5.8,1) {$C_2$};

\node (y3) at (5,-0.5) {$y_3$};
\draw[red, thick] (5,-0.5) circle (0.5);
\node[red] at (5.8,-0.5) {$C_3$};

\node (yN) at (5,-2.5) {$y_N$};
\draw[red, thick] (5,-2.5) circle (0.5);
\node[red] at (5.8,-2.5) {$C_N$};

\node at (0,-1.5) {$\vdots$};
\node at (5,-1.5) {$\vdots$};

\draw[->] (x1) .. controls (2.5,2.2) and (3.5,1.8) .. (y2);
\draw[->] (x2) .. controls (2.5,1.2) and (3.5,0.8) .. (y3);
\draw[->] (xN) .. controls (2.5,-2) and (3.5,1.5) .. (y1);

\node at (2.5,-3.5) {$y_{N+1}:=y_1,\; C_{N+1}:=C_1$};

\end{tikzpicture} 
\caption{Cyclic permutation of images: $y_{N+1}:=y_1,\; C_{N+1}:=C_1$.} 
\end{figure}

Now, since $T^{*}$ is a minimizer,
\[
\mathcal{M}(T^{*})\leq \mathcal{M}(T).
\]
Then from \eqref{eqn: Monge}, we deduce
\begin{align*}
    \sum_{k=1}^{N}\int_{B_{k}}|x-T^{*}(x)|d\mu(x)\leq\sum_{k=1}^{N}\int_{B_{k}}|x-T(x)|d\mu(x).
\end{align*}
In particular, from the above construction, we have $T^{*}(x_{k})=y_{k}$ and $T^{*}(B_{k})=C_{k}$ along with the cyclically permutation of the image balls under $T$, \eqref{new T}. The application of cyclical monotonicity,  Definition \ref{definition of cyclical sets}, applied to $c(x,y)=|x-y|$ along with $\mathcal{M}(T^{*})\leq\mathcal{M}(T)$, induces the following inequality
\[
\sum_{k=1}^{N}|x_{k}-y_{k}|\leq\sum_{k=1}^{N}|x_{k}-y_{k+\ell}|.
\]

The geometric interpretation from the above cyclical monotonicity inequality is as follows. Consider any closed curve $\mathcal{C}$ in $\text{spt}(\mu)$, parametrized with constant speed, $\{\mathbf{r}(t)\;:\; 0\leq t \leq 1\}$, $\mathbf{r}(0)=\mathbf{r}(1)$. Let $x_{k}=\mathbf{r}\left(\frac{k}{N}\right)$ be $N$ equally spaced points along $\mathcal{C}$ just like in Evans' \cite{evans1997monge}. Now fix $\varepsilon>0$ and an a positive integer $\ell$, and take $N\to \infty$, $\frac{\ell}{N}\to \varepsilon$ in the above cyclical inequality to deduce,
\begin{align*}
    \int_{0}^{1}\left|\mathbf{r}(t)-T^{*}(\mathbf{r}(t))\right|dt&\leq\int_{0}^{1}\left|\mathbf{r}(t)-T^{*}(\mathbf{r}(t+\varepsilon))\right|dt\\
    &=\int_{0}^{1}\left|\mathbf{r}(t-\varepsilon)-T^{*}(\mathbf{r}(t))\right|dt\\
    &=:i(\varepsilon).
\end{align*}
So since $T^{*}$ is a minimizer of \eqref{eqn: Monge}, then $i(\varepsilon)$ has a minimum at $\varepsilon=0$. Therefore,
\[
0=\frac{d}{d\varepsilon}\Big|_{\varepsilon=0}i(\varepsilon)=-\int_{0}^{1}\frac{\left(\mathbf{r}(t)-T^{*}(\mathbf{r}(t))\right)\mathbf{r}^{\prime}(t)}{\left|\mathbf{r}(t)-T^{*}(\mathbf{r}(t))\right|}dt.
\]
Here is where Evans \cite{evans1997monge} made the crucial connection to calculus---gradient of a vector field. Indeed, the above is equivalent to 
\begin{align}\label{line integral}
\oint_{\mathcal{C}}\xi\cdot d\mathbf{r}=0,
\end{align}
for the vector field 
\begin{align}\label{vector field}
    \xi(x):=\frac{T^{*}(x)-x}{\left|T^{*}(x)-x\right|}.
\end{align}
In accordance with \eqref{line integral} holding for all closed curves $\mathcal{C}$ in domain $\mathcal{D}\subset\mathbb{R}^{d}$, the standard zero-circulation criterion implies that $\xi$ is a conservative vector field, and hence there exists a scalar function $f^{*}:\mathbb{R}^{d}\to\mathbb{R}$ such that 
\[
\xi(x)=-\nabla f^{*}(x).
\]
As a consequence this solves an \emph{eikonal equation},
\[
\left|\nabla f^{*}(x)\right|=1
\]
on $\text{spt}(\mu)$. In other words, the direction prescribed by $T^{*}$ at each point $x$ is given by the gradient of a potential scalar function:
\begin{equation}
T^{*}(x) = x - \mathsf{d}^{*}(x) \nabla f^{*}(x).
\label{eq:optimal_direction}
\end{equation}
It is crucial to observe, however, that this derivation yields absolutely no information regarding the distance $\mathsf{d}^{*}(x) := |T^{*}(x) - x|$ along which the transport occurs. 
\begin{remark}
As Evans remarks, the insightful computations done in  \cite{evans1997monge} were first obtained by Monge, who arrived at them through purely geometric reasoning involving developable surfaces. For further details, we refer to the original works of Monge \cite{monge1781memoire} and Dupin \cite{dupin1822applications}. The investigations undertaken by Monge and his disciples into this problem ultimately led to many of their seminal contributions to differential geometry; see, for instance, the historical account by Struik \cite{Struik1988}.
\end{remark}

\subsection{The geometry of paths in optimal tranport}
In the same vein as the closing discussion of the previous section, we now apply the preceding calculations to the geometry of path-dependent transport in optimal transport theory. This line of inquiry formed the core of the author’s doctoral research in 2022, during which he examined an optimal transport problem involving path dependence and congestion effects. The problem itself was originally proposed to him by his advisor, Guillen, in 2018 as a dissertation topic. Then, Cabrera began incorporating path-dependent structures and congestion into optimal transport frameworks. It is worth noting, however, that such ideas, of incorporating a path-dependent transport problem to optimal transport, was not entirely novel; indeed, Villani (\cite{villani2008optimal}) and others had previously explored optimal transport in conjunction with Lagrangian dynamics. In particular, Villani advanced this perspective by formulating a time-dependent variant of the optimal transport problem, thereby enriching the classical theory with dynamic considerations.

Brenier established the structure and uniqueness of optimal transport maps for the quadratic cost, while Benamou and Brenier in \cite{benamou2000computational}  introduced the dynamics fluid-mechanical formulation, showing that optimal transport corresponds to the least-action path between probability measures. Namely, reinterpreted optimal transport dynamically as minimizing kinetic energy along a flow of probability densities. In principle, their theory a priori already included linear paths.

In \cite{villani2003topics} - \cite{villani2008optimal}, Villani explains a construction in optimal transport where an action, $\mathcal{A}$, which measures the cost of displacing along a continuous path $\gamma$, defined on a time interval, is used to consider a cost, $c$, by minimizing the action among paths that go from the initial point of the path, $x$, to the final point of the path, $y$,
\[
c(x, y) = \inf\left\{ \mathcal{A}(\gamma) : \gamma(0) = x, \; \gamma(1) = y, \; \gamma \in C\left([0,1]; \mathbb{R}^d\right) \right\}.
\]
A typical and classical example of such an action is the kinetic energy, 
\[
\mathcal{A}(\gamma) = \int_0^1 \frac{|\dot{\gamma}(t)|^2}{2} \, dt.
\]
More generally, an action is given by Lagrangians as Villani details in \cite{villani2008optimal}. Following this line of thinking heavily, we begin our study of our cost action functional. We cover some results pertaining to a Lagrangian $L(\dot{\gamma}, \gamma, t)$ and its action functional, which is given by the time integral of $L(\dot{\gamma}, \gamma, t)$ along the path, which defines a cost function 
\[
c(\gamma) := \int_0^1 L(\dot{\gamma}, \gamma, t) \, dt,
\]
just like in Villani's general example of $\mathcal{A}(\gamma)$. The action functional will be interpreted as the cost of that path. 

What made Cabrera's frame work on optimal transport and path dependence sufficiently novel was to introduce interaction and congestion effects to the problem.  We will see that in the next section the similarities and differences from what Villani and others did. 
\begin{remark}
    Notice that the action functional $\mathcal{A}$ coincides with the energy functional \eqref{KE}, namely, the Lagrangian, $L(\dot{\gamma},\gamma, t)$ equals the energy in \eqref{kinetic energy}. Furthermore, in \cite{cabrera2021optimal}, the author similarly applied a version of an ``effective cost" function to solve the MKP on paths. 
\end{remark}
\subsection{Optimal transport, paths, and congestion} The optimal transport problem that incorporates paths and congestion was dealt with in \cite{cabrera2021optimal}-\cite{Cabrera2022}. Moreover, the problem  studied in \cite{cabrera2021optimal}, namely the minimization of \eqref{eqn: OT paths}, where the cost $c(\gamma)$ is the kinetic energy \eqref{KE}, while the distribution of paths given by $\pi$, $\int\mathcal{U}d\pi$ in \eqref{interaction functional}, uses the Gaussian interaction potential $\mathcal{K}_{G}$ and is thus: 
\[ \begin{aligned} \min_{\pi\in\Pi_{\text{path}}(\mu,\nu)} \quad & \int_{\Omega} c(\gamma)\, d\pi(\gamma) +\tfrac{1}{2}\iint_{\Omega\times\Omega}\mathcal{U}(\gamma,\sigma) d\pi(\gamma)d\pi(\sigma)\\ \text{subject to}\quad & (e_0)_\# \pi = \mu,\\ & (e_1)_\# \pi = \nu. \end{aligned} \] 

Furthermore, looking at the ``optimal transport with paths problem" and omitting the interaction term from above for a moment, $\iint\mathcal{U}d\pi d\pi$ where $\mathcal{U}$ is defined in \eqref{interaction functional}, we have the following. If a path $\gamma$ with end-points $\gamma(0)=x, \gamma(1)=y$ in the path space $\Omega$ appears in an optimal plan, with kinetic cost $c$ defined by
\begin{align}\label{KE}
c(\gamma):=\int_{0}^{1}\tfrac{1}{2}|\dot{\gamma}|^{2}\;dt,
\end{align}
then one would expect that \[ c(\gamma)=\widetilde{c}\bigl(\gamma(0),\gamma(1)\bigr). \] Here $\widetilde{c}$ denotes what we shall call the \emph{end-point cost}, \[ \widetilde{c}(x,y) := \inf\Bigl\{\, c(\gamma)\;:\;\gamma(0)=x,\ \gamma(1)=y \Bigr\}. \] Such paths will be said to be \emph{$c$-minimal}. This is obtained by adapting the line of reasoning developed by Villani and others for the action functional $\mathcal{A}$ appearing in the previous section. In view of this dynamics, we have a solution to the classical Kantorovich problem that incorporates the geometry of paths.

Reintroducing the interaction term due to congestion, $\frac12 \iint_{\Omega\times\Omega} \mathcal U(\gamma,\sigma)\, d\pi(\gamma)d\pi(\sigma)$, we now identify the effective cost associated with a minimizer of the interacting path-space problem. Suppose that $\pi_0\in\Pi_{\mathrm{path}}(\mu,\nu)$ is a minimizer of \eqref{eqn: OT paths}. For any $\pi\in\Pi_{\mathrm{path}}(\mu,\nu),$ consider the admissible variation \[ \pi_\varepsilon := (1-\varepsilon)\pi_0+\varepsilon\pi, \qquad 0\leq\varepsilon\leq1. \] Since $\Pi_{\mathrm{path}}(\mu,\nu)$ is convex, $\pi_\varepsilon$ has the same prescribed endpoint marginals and is therefore an admissible competitor. By the minimality of $\pi_0$, we have that
\begin{align} \label{first variation effective cost} 0 \leq \left. \frac{d}{d\varepsilon} \mathcal J(\pi_\varepsilon) \right|_{\varepsilon=0^+}. \end{align} 
Set $\delta\pi:=\pi-\pi_0.$ The first variation of the single-path term is \[ \left. \frac{d}{d\varepsilon} \right|_{\varepsilon=0} \int_\Omega c(\gamma)\,d\pi_\varepsilon(\gamma) = \int_\Omega c(\gamma)\,d\delta\pi(\gamma). \] For the interaction term, symmetry of $\mathcal U$ gives \begin{align*} & \left. \frac{d}{d\varepsilon} \right|_{\varepsilon=0} \frac12 \iint_{\Omega\times\Omega} \mathcal U(\gamma,\sigma)\, d\pi_\varepsilon(\gamma)d\pi_\varepsilon(\sigma) \\ &\qquad= \frac12 \iint_{\Omega\times\Omega} \mathcal U(\gamma,\sigma)\, d\delta\pi(\gamma)d\pi_0(\sigma) + \frac12 \iint_{\Omega\times\Omega} \mathcal U(\gamma,\sigma)\, d\pi_0(\gamma)d\delta\pi(\sigma) \\ &\qquad= \iint_{\Omega\times\Omega} \mathcal U(\gamma,\sigma)\, d\delta\pi(\gamma)d\pi_0(\sigma). \end{align*} 

Thus the factor $\frac12$ appearing in the quadratic interaction energy disappears in the first variation. This motivates the following effective cost: \begin{align} \label{effective cost} c_{\pi_0}(\gamma) := c(\gamma) + \int_\Omega \mathcal U(\gamma,\sigma)\,d\pi_0(\sigma). \end{align} Combining the two first variations in \eqref{first variation effective cost} yields \[ 0 \leq \int_\Omega c_{\pi_0}(\gamma)\, d(\pi-\pi_0)(\gamma), \] and therefore
\begin{align} \label{linearized path optimality} \int_\Omega c_{\pi_0}(\gamma)\,d\pi_0(\gamma) \leq \int_\Omega c_{\pi_0}(\gamma)\,d\pi(\gamma), \qquad \text{for every } \pi\in\Pi_{\mathrm{path}}(\mu,\nu). \end{align} Hence the nonlinear optimality of $\pi_0$ for the original interacting functional implies that $\pi_0$ minimizes the linearized path-space functional \[ \pi \longmapsto \int_\Omega c_{\pi_0}(\gamma)\,d\pi(\gamma) \] over $\Pi_{\mathrm{path}}(\mu,\nu)$. We now define the corresponding effective endpoint cost by \begin{align} \label{end-point cost} \widetilde c_{\pi_0}(x,y) := \inf\left\{ c_{\pi_0}(\gamma): \gamma(0)=x,\; \gamma(1)=y \right\}. \end{align} 
The effective cost \eqref{effective cost} incorporates both contributions appearing in the interacting path-space problem \eqref{eqn: OT paths}: the single-path action $c$, which contains the prescribed background potential $V$ through \eqref{kinetic energy}, and the interaction generated by the distribution of paths through $\mathcal U$. More importantly, \eqref{linearized path optimality} shows that $c_{\pi_0}$ is not merely a convenient rewriting of the energy; it is precisely the cost generated by the first variation of the nonlinear path-space functional at $\pi_0$. 

This effective path cost provides the variational link between the two parts of the present paper. Minimization with prescribed endpoints leads to the effective endpoint cost $\widetilde c_{\pi_0}$ in \eqref{end-point cost}, which will be used in the endpoint cyclical-monotonicity argument and in the construction of the conservative initial-momentum field. On the other hand, endpoint-preserving variations of the original interacting path-space functional yield the Euler--Lagrange dynamics and the corresponding kinetic equation. Thus the endpoint geometry and the phase-space dynamics are two consequences of the same underlying path-space variational problem.

The next result quantifies the motivations behind defining the effective and end-point cost\eqref{effective cost}-\eqref{end-point cost}; and, at the same time illustrates that a Kantorovich problem with congestion has a solution in the path space  and projects onto a Euclidean solution. The proof was done in \cite{cabrera2021optimal}, but we prove it here to demonstrate the reduction from path-space optimality to endpoint optimality.

\begin{theorem} \label{thm:endpoint reduction} Let $\pi_0\in\Pi_{\mathrm{path}}(\mu,\nu)$ be a minimizer of the interacting path-space functional
 \[ \mathcal J(\pi) = \int_\Omega c(\gamma)\,d\pi(\gamma) + \frac12 \iint_{\Omega\times\Omega} \mathcal U(\gamma,\sigma)\, d\pi(\gamma)d\pi(\sigma), \]
 where $\mathcal U$ is symmetric. Define the linearized effective path cost \eqref{effective cost} and the associated effective endpoint cost \eqref{end-point cost}. Assume that the endpoint minimization problem defining $\widetilde c_{\pi_0}$ admits measurable minimizing selections, or, more generally, that measurable $\varepsilon$-minimizing selections exist for every $\varepsilon>0$. Then the following statements hold. \begin{enumerate} \item The optimal path plan $\pi_0$ minimizes the linearized functional \[ \pi \longmapsto \int_\Omega c_{\pi_0}(\gamma)\,d\pi(\gamma) \] over $\Pi_{\mathrm{path}}(\mu,\nu)$. \item For $\pi_0$-almost every $\gamma\in\Omega$, \begin{equation} \label{eq:pathwise endpoint equality} c_{\pi_0}(\gamma) = \widetilde c_{\pi_0}\bigl(\gamma(0),\gamma(1)\bigr). \end{equation} \item The endpoint coupling \begin{equation} \label{eq:endpoint coupling eta star} \eta^* := (e_0,e_1)_\#\pi_0 \end{equation} minimizes \[ \eta \longmapsto \int_{\mathbb R^d\times\mathbb R^d} \widetilde c_{\pi_0}(x,y)\,d\eta(x,y) \] over $\Pi(\mu,\nu)$. In particular, if \[ \pi_0=(x\mapsto\gamma_x)_\#\mu, \qquad \gamma_x(0)=x, \qquad \gamma_x(1)=T^*(x), \] 
 then 
 \[ \eta^* = (\operatorname{id},T^*)_\#\mu \] 
 and \begin{equation} \label{eq:deterministic endpoint equality} c_{\pi_0}(\gamma_x) = \widetilde c_{\pi_0}\bigl(x,T^*(x)\bigr), \qquad \text{for $\mu$-a.e. }x. \end{equation} \end{enumerate} \end{theorem}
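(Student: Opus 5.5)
Part (1) is essentially the first-variation computation already carried out before the statement, and I would simply make it rigorous. For $\pi\in\Pi_{\mathrm{path}}(\mu,\nu)$, set $\pi_\varepsilon=(1-\varepsilon)\pi_0+\varepsilon\pi$. Then $\mathcal J(\pi_\varepsilon)$ is a quadratic polynomial in $\varepsilon$:
\[
\mathcal J(\pi_\varepsilon)=\mathcal J(\pi_0)+\varepsilon\Bigl(\int c\,d\delta\pi+\iint\mathcal U\,d\delta\pi\,d\pi_0\Bigr)+\frac{\varepsilon^2}{2}\iint\mathcal U\,d\delta\pi\,d\delta\pi .
\]
The linear coefficient uses the symmetry of $\mathcal U$. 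I would assume both $\mathcal J(\pi_0)$ and $\mathcal J(\pi)$ are finite, since otherwise the claim is trivial. For the Gaussian kernel, $\mathcal U$ is bounded by $\lambda$, so every term is finite. Minimality of $\pi_0$ gives $\mathcal J(\pi_\varepsilon)\ge\mathcal J(\pi_0)$. Dividing by $\varepsilon$ and letting $\varepsilon\downarrow 0$ yields $\int c_{\pi_0}\,d\pi_0\le\int c_{\pi_0}\,d\pi$, which is \eqref{linearized path optimality}.

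For (2), the trivial inequality $c_{\pi_0}(\gamma)\ge\widetilde c_{\pi_0}(\gamma(0),\gamma(1))$ holds for every $\gamma$, straight from \eqref{end-point cost}. For the reverse, I would build a competitor by rerouting $\pi_0$ through near-minimizers. Fix $\varepsilon>0$ and take a measurable $\varepsilon$-minimizing selection $S_\varepsilon:\mathbb R^d\times\mathbb R^d\to\Omega$. This means $S_\varepsilon(x,y)(0)=x$, $S_\varepsilon(x,y)(1)=y$, and $c_{\pi_0}(S_\varepsilon(x,y))\le\widetilde c_{\pi_0}(x,y)+\varepsilon$. Define $\pi:=(S_\varepsilon\circ(e_0,e_1))_\#\pi_0$. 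Its endpoint marginals agree with those of $\pi_0$, so $\pi\in\Pi_{\mathrm{path}}(\mu,\nu)$. Applying (1) gives
\[
\int c_{\pi_0}\,d\pi_0\le\int c_{\pi_0}\,d\pi\le\int\widetilde c_{\pi_0}(\gamma(0),\gamma(1))\,d\pi_0(\gamma)+\varepsilon .
\]
Letting $\varepsilon\to0$ shows that the nonnegative function $c_{\pi_0}(\gamma)-\widetilde c_{\pi_0}(\gamma(0),\gamma(1))$ has zero $\pi_0$-integral. It therefore vanishes $\pi_0$-a.e., which is \eqref{eq:pathwise endpoint equality}.

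The main obstacle is integrability in this argument: subtracting the two integrals requires $\int c_{\pi_0}\,d\pi_0<\infty$. I would obtain this from $\mathcal J(\pi_0)<\infty$, which holds by comparison with any admissible plan of finite energy, together with $0\le\mathcal U\le\lambda$. I would also need $c$ to be bounded below, which follows from the hypotheses on $V$, for instance $V$ bounded above. This also makes $\widetilde c_{\pi_0}$ bounded below. If exact minimizing selections exist, the same argument works with $\varepsilon=0$.

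For (3), set $\eta^*=(e_0,e_1)_\#\pi_0\in\Pi(\mu,\nu)$. Take any $\eta\in\Pi(\mu,\nu)$ and lift it to $\pi:=(S_\varepsilon)_\#\eta\in\Pi_{\mathrm{path}}(\mu,\nu)$. Using (2), then (1), then the $\varepsilon$-minimality of $S_\varepsilon$,
\[
\int\widetilde c_{\pi_0}\,d\eta^*=\int c_{\pi_0}\,d\pi_0\le\int c_{\pi_0}\,d\pi\le\int\widetilde c_{\pi_0}\,d\eta+\varepsilon .
\]
Sending $\varepsilon\to0$ proves optimality of $\eta^*$. For the deterministic case $\pi_0=(x\mapsto\gamma_x)_\#\mu$, we have $(e_0,e_1)\circ(x\mapsto\gamma_x)=(\operatorname{id},T^*)$, so $\eta^*=(\operatorname{id},T^*)_\#\mu$. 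Finally, \eqref{eq:deterministic endpoint equality} follows by pulling back the $\pi_0$-null exceptional set in (2) to a $\mu$-null set of $x$.
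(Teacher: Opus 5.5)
Your proposal is correct and follows the paper's proof essentially step for step: the convex-combination first variation for (1), rerouting $\pi_0$ through measurable $\varepsilon$-minimizing selections for (2), and lifting an arbitrary $\eta\in\Pi(\mu,\nu)$ through such selections for (3). The only difference is in (2). You reroute every path and conclude because a nonnegative function has zero $\pi_0$-integral, whereas the paper argues by contradiction, rerouting only on the set $A_\delta$ where $c_{\pi_0}\geq\widetilde c_{\pi_0}(\gamma(0),\gamma(1))+\delta$. Your explicit check that $\int c_{\pi_0}\,d\pi_0<\infty$ and that $c$ is bounded below supplies a finiteness point the paper leaves implicit; the paper's strict inequality needs it too.
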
 
 \begin{proof} 
 Firstly, Linearize the optimal $\pi_0$. To this end, let $\pi\in\Pi_{\mathrm{path}}(\mu,\nu)$ be arbitrary and, for $\varepsilon\in[0,1]$, define the variation
 \[ \pi_\varepsilon = (1-\varepsilon)\pi_0+\varepsilon\pi. \] 
 Since $\Pi_{\mathrm{path}}(\mu,\nu)$ is convex,$\pi_\varepsilon\in\Pi_{\mathrm{path}}(\mu,\nu)$. By the minimality of $\pi_0$, we thus have
 \[ \mathcal J(\pi_\varepsilon) \geq \mathcal J(\pi_0). \] 
 Consequently, \[ 0 \leq \left. \frac{d}{d\varepsilon} \mathcal J(\pi_\varepsilon) \right|_{\varepsilon=0^+}. \] 
 Writing $\delta\pi:=\pi-\pi_0,$ the first variation of the single-path term is calculated as follows
 \[ \left. \frac{d}{d\varepsilon} \int_\Omega c(\gamma)\,d\pi_\varepsilon(\gamma) \right|_{\varepsilon=0} = \int_\Omega c(\gamma)\,d\delta\pi(\gamma). \] 
 For the interaction term, symmetry of $\mathcal U$ gives
 \begin{align*}  \left. \frac{d}{d\varepsilon} \frac12 \iint_{\Omega\times\Omega} \mathcal U(\gamma,\sigma)\, d\pi_\varepsilon(\gamma)d\pi_\varepsilon(\sigma) \right|_{\varepsilon=0}\;= \iint_{\Omega\times\Omega} \mathcal U(\gamma,\sigma)\, d\delta\pi(\gamma)d\pi_0(\sigma). \end{align*} 
 Hence \begin{align*} 0 &\leq \int_\Omega \left[ c(\gamma) + \int_\Omega \mathcal U(\gamma,\sigma)\,d\pi_0(\sigma) \right] d(\pi-\pi_0)(\gamma) \\ &= \int_\Omega c_{\pi_0}(\gamma)\, d(\pi-\pi_0)(\gamma). \end{align*} Therefore \begin{equation} \label{eq:linearized optimality} \int_\Omega c_{\pi_0}(\gamma)\,d\pi_0(\gamma) \leq \int_\Omega c_{\pi_0}(\gamma)\,d\pi(\gamma) \end{equation} for every $\pi\in\Pi_{\mathrm{path}}(\mu,\nu)$. Thus $\pi_0$ minimizes the linearized effective cost.

 Secondly, we shall sho that optimal trajectories minimize the effective cost between their own endpoints. Indeed, by definition of $\widetilde c_{\pi_0}$, 
 \begin{equation} \label{eq:endpoint lower bound} \widetilde c_{\pi_0}\bigl(\gamma(0),\gamma(1)\bigr) \leq c_{\pi_0}(\gamma) \end{equation}
 for every admissible path $\gamma$. We claim that equality holds for $\pi_0$-almost every $\gamma$. Suppose, towards sake of contradiction, that the set 
 \[ A = \left\{ \gamma\in\Omega : c_{\pi_0}(\gamma) > \widetilde c_{\pi_0}\bigl(\gamma(0),\gamma(1)\bigr) \right\} \] 
 has positive $\pi_0$-measure. Then, for some $\delta>0$, the set \[ A_\delta = \left\{ \gamma\in\Omega : c_{\pi_0}(\gamma) \geq \widetilde c_{\pi_0}\bigl(\gamma(0),\gamma(1)\bigr) +\delta \right\} \] has positive $\pi_0$-measure. For each endpoint pair $(x,y)$ arising from $A_\delta$, choose, by the measurable selection hypothesis, an admissible path $\widehat\gamma_{x,y}$ joining $x$ to $y$ such that \[ c_{\pi_0}(\widehat\gamma_{x,y}) \leq \widetilde c_{\pi_0}(x,y) +\frac{\delta}{2}. \] Define the measurable replacement map $S:\Omega\to\Omega$ by
\[
S(\gamma)
:=
\begin{cases}
\widehat\gamma_{\gamma(0),\gamma(1)},
& \gamma\in A_\delta,\\[0.3em]
\gamma,
& \gamma\notin A_\delta,
\end{cases}
\]
and set
\[
\widehat\pi:=S_\#\pi_0.
\]
By construction, the replacement path has the same endpoints as the
original path. Hence
\[
e_i\circ S=e_i,
\qquad i=0,1.
\]
It follows that
\[
(e_i)_\#\widehat\pi
=
(e_i)_\#(S_\#\pi_0)
=
(e_i\circ S)_\#\pi_0
=
(e_i)_\#\pi_0,
\qquad i=0,1.
\]
Therefore
\[
(e_0)_\#\widehat\pi=\mu,
\qquad
(e_1)_\#\widehat\pi=\nu,
\]
and consequently
\[
\widehat\pi\in\Pi_{\mathrm{path}}(\mu,\nu).
\]

Moreover, for every $\gamma\in A_\delta$,  we arrive at the estimate
\[
c_{\pi_0}(S(\gamma))
\leq
\widetilde c_{\pi_0}\bigl(\gamma(0),\gamma(1)\bigr)
+\frac{\delta}{2}
\leq
c_{\pi_0}(\gamma)-\frac{\delta}{2}.
\]
Since $S(\gamma)=\gamma$ outside $A_\delta$, we obtain
\begin{align*}
\int_\Omega c_{\pi_0}(\gamma)\,d\widehat\pi(\gamma)
&=
\int_\Omega c_{\pi_0}(S(\gamma))\,d\pi_0(\gamma)\\
&\leq
\int_\Omega c_{\pi_0}(\gamma)\,d\pi_0(\gamma)
-\frac{\delta}{2}\pi_0(A_\delta)\\
&<
\int_\Omega c_{\pi_0}(\gamma)\,d\pi_0(\gamma),
\end{align*}
as $\pi_0(A_\delta)>0$. This is the desired contradiction to
\eqref{eq:linearized optimality}.
 Hence \[ c_{\pi_0}(\gamma) = \widetilde c_{\pi_0}\bigl(\gamma(0),\gamma(1)\bigr) \] for $\pi_0$-almost every $\gamma$.

 Finally, we apply an optimality of the endpoint coupling. Define \[ \eta^* = (e_0,e_1)_\#\pi_0. \] Since \[ (e_0)_\#\pi_0=\mu, \qquad (e_1)_\#\pi_0=\nu, \] we have \[ \eta^*\in\Pi(\mu,\nu). \] 
 According to the optimal trajectories that minimize the effective cost \eqref{end-point cost} between their end-points, and the definition of push-forward, \begin{align} \int_\Omega c_{\pi_0}(\gamma)\,d\pi_0(\gamma) &= \int_\Omega \widetilde c_{\pi_0} \bigl(\gamma(0),\gamma(1)\bigr) \,d\pi_0(\gamma) \nonumber\\ &= \int_{\mathbb R^d\times\mathbb R^d} \widetilde c_{\pi_0}(x,y)\, d\eta^*(x,y). \label{eq:path endpoint equality integrated} \end{align} Now let $\eta\in\Pi(\mu,\nu)$ be arbitrary. For each $\varepsilon>0$, choose measurably a path $\gamma^\varepsilon_{x,y}$ joining $x$ to $y$ such that \[ c_{\pi_0}(\gamma^\varepsilon_{x,y}) \leq \widetilde c_{\pi_0}(x,y)+\varepsilon. \] Define the corresponding path plan by \[ \pi^\varepsilon = \bigl((x,y)\mapsto\gamma^\varepsilon_{x,y}\bigr)_\#\eta. \] Because every selected path joins $x$ to $y$, $(e_0)_\#\pi^\varepsilon=\mu,$ and $(e_1)_\#\pi^\varepsilon=\nu,$ so $\pi^\varepsilon \in \Pi_{\mathrm{path}}(\mu,\nu).$ By the linearized optimality of $\pi_0$, we have
 \[ \int_\Omega c_{\pi_0}\,d\pi_0 \leq \int_\Omega c_{\pi_0}\,d\pi^\varepsilon. \] 
 Using the defining property of the selected paths, we aqcuire the following estimate
 \[ \int_\Omega c_{\pi_0}\,d\pi^\varepsilon \leq \int_{\mathbb R^d\times\mathbb R^d} \widetilde c_{\pi_0}(x,y)\,d\eta(x,y) +\varepsilon. \] 
 Then combining this with \eqref{eq:path endpoint equality integrated} gives \[ \int \widetilde c_{\pi_0}\,d\eta^* \leq \int \widetilde c_{\pi_0}\,d\eta +\varepsilon. \] Letting $\varepsilon\downarrow0$ decrease to zero yields \[ \int_{\mathbb R^d\times\mathbb R^d} \widetilde c_{\pi_0}(x,y)\,d\eta^*(x,y) \leq \int_{\mathbb R^d\times\mathbb R^d} \widetilde c_{\pi_0}(x,y)\,d\eta(x,y). \] Since $\eta\in\Pi(\mu,\nu)$ was arbitrary, $\eta^*$ is optimal for the effective endpoint cost $\widetilde c_{\pi_0}$ in \eqref{end-point cost}. Finally, if \[ \pi_0=(x\mapsto\gamma_x)_\#\mu, \qquad \gamma_x(0)=x, \qquad \gamma_x(1)=T^*(x), \] then \[ \eta^* = (e_0,e_1)_\#\pi_0 = (\operatorname{id},T^*)_\#\mu. \] Applying \eqref{eq:pathwise endpoint equality} to $\gamma=\gamma_x$ gives \[ c_{\pi_0}(\gamma_x) = \widetilde c_{\pi_0}\bigl(x,T^*(x)\bigr) \] for $\mu$-almost every $x$. 
\end{proof}

In view of this result, we characterize such a minimizer through the dual problem. 
\begin{lemma}\label{dual problem} 
Let $\pi_0$ be a minimizer of the optimal transport with interaction and congestion problem \eqref{eqn: OT paths}. Then there exist functions \[ \varphi,\psi:\mathbb{R}^d\to\mathbb{R} \] such that \[ \varphi(\gamma(0))+\psi(\gamma(1)) \leq c_{\pi_0}(\gamma), \qquad \forall\,\gamma\in\Omega, \] and \[ \varphi(\gamma(0))+\psi(\gamma(1)) = c_{\pi_0}(\gamma), \qquad \text{for }\pi_0\text{-a.e. }\gamma\in\Omega. \] \end{lemma}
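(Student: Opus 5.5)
The plan is to reduce the path-space statement to classical Kantorovich duality for the effective endpoint cost, using Theorem~\ref{thm:endpoint reduction}. By part (3) of that theorem, the endpoint coupling $\eta^*=(e_0,e_1)_\#\pi_0$ is optimal in $\Pi(\mu,\nu)$ for the cost $\widetilde c_{\pi_0}$. I will therefore first verify that $\widetilde c_{\pi_0}$ satisfies the hypotheses of Theorem~\ref{thm:fundamental_OT}. Then I will extract a $\widetilde c_{\pi_0}$-concave potential, and finally lift the resulting dual pair back to paths via the trivial inequality \eqref{eq:endpoint lower bound} and the pathwise equality \eqref{eq:pathwise endpoint equality}.

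\textbf{Step 1: admissibility of the endpoint cost.} Since $\mathcal K_G\ge 0$, we have $\mathcal U\ge 0$, so $c_{\pi_0}\ge c$. Under the standing bound on $V$, $c$ is bounded below, and hence so is $\widetilde c_{\pi_0}$. For an upper bound, I would test $\widetilde c_{\pi_0}(x,y)$ on the straight segment from $x$ to $y$. Using $\mathcal U\le\lambda$ and the bound on $V$, this gives $\widetilde c_{\pi_0}(x,y)\le \tfrac12|x-y|^2+C\le |x|^2+|y|^2+C$, which yields $a\in L^1(\mu)$, $b\in L^1(\nu)$ provided $\mu,\nu$ have finite second moments (or compact support).

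\textbf{Step 2: continuity.} Continuity of $\widetilde c_{\pi_0}$ is the main obstacle. I expect to handle it by stability of endpoint perturbations: given a near-minimizer $\gamma$ for $(x,y)$, the path $\gamma(t)+(1-t)(x'-x)+t(y'-y)$ joins $(x',y')$. Its kinetic part changes by $O(|x-x'|+|y-y'|)$, locally uniformly in the action bound. The interaction part changes by at most $2\lambda\sqrt{\beta}\,\sup|\nabla e^{-\beta|z|^2}|$ times the displacement, since the Gaussian is globally Lipschitz, and the $V$ part changes by the local Lipschitz constant of $V$. This gives local Lipschitz continuity of $\widetilde c_{\pi_0}$.

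If continuity fails under weaker hypotheses, I would instead use lower semicontinuity together with the version of Kantorovich duality for l.s.c.\ costs bounded below. That version still yields a $c$-concave $\varphi$ and $\psi=\varphi^{c}$ with $\varphi(x)+\psi(y)\le\widetilde c_{\pi_0}(x,y)$ everywhere and equality $\eta^*$-a.e.

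\textbf{Step 3: the dual pair.} Applying Theorem~\ref{thm:fundamental_OT}(iii) gives a $\widetilde c_{\pi_0}$-concave $\varphi$ with $\operatorname{supp}\eta^*\subset\partial^{c+}\varphi$. Setting $\psi(y):=\inf_x\{\widetilde c_{\pi_0}(x,y)-\varphi(x)\}$ gives $\varphi(x)+\psi(y)\le\widetilde c_{\pi_0}(x,y)$ for all $x,y$, with equality on $\operatorname{supp}\eta^*$. Where $\varphi$ or $\psi$ takes the value $\pm\infty$ off the supports, I redefine it by a large finite constant outside $\operatorname{supp}\mu$ (respectively $\operatorname{supp}\nu$). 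This keeps the inequality and makes the functions real-valued, as the statement requires.

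\textbf{Step 4: lifting to paths.} For every $\gamma\in\Omega$, \eqref{eq:endpoint lower bound} gives $\varphi(\gamma(0))+\psi(\gamma(1))\le\widetilde c_{\pi_0}(\gamma(0),\gamma(1))\le c_{\pi_0}(\gamma)$. For the equality, I use that $\pi_0$-a.e.\ $\gamma$ has $(\gamma(0),\gamma(1))\in\operatorname{supp}\eta^*$, because $\eta^*$ is the push-forward of $\pi_0$. Combined with \eqref{eq:pathwise endpoint equality}, this gives $\varphi(\gamma(0))+\psi(\gamma(1))=\widetilde c_{\pi_0}(\gamma(0),\gamma(1))=c_{\pi_0}(\gamma)$ for $\pi_0$-a.e.\ $\gamma$, which is the claim.
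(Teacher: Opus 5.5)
Your route differs from the paper's. The paper only sketches a formal Lagrange-multiplier/KKT argument directly on path space. You instead reduce to Kantorovich duality for $\widetilde c_{\pi_0}$ via the endpoint-reduction theorem and lift back to paths, which is the sounder way to prove the statement. Steps 1, 2 and 4 are fine in outline, with two caveats: you need the measurable-selection hypothesis of the reduction theorem, and a bound on $\nabla_xV$ alone does not make $c$ globally bounded below (this is harmless on compact supports).

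The gap is in Step 3. Replacing infinite values of $\varphi$ or $\psi$ by a large finite constant does \emph{not} keep the inequality. A $\widetilde c_{\pi_0}$-concave $\varphi$ satisfies $\varphi(x)=\inf_y\{\widetilde c_{\pi_0}(x,y)-\psi(y)\}$ with $\psi=\varphi^{c+}$. So at a point where $\varphi(x)=-\infty$, no finite value whatsoever, large or small, is compatible with $\varphi(x)+\psi(y)\le\widetilde c_{\pi_0}(x,y)$ for all $y$. The same holds at a point where $\psi(y)=-\infty$. The statement requires real-valued functions and the inequality for every $\gamma\in\Omega$, i.e.\ for every pair $(x,y)\in\mathbb R^d\times\mathbb R^d$. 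You therefore have to modify both potentials, not patch one of them pointwise.

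With compact supports $K_1=\operatorname{supp}\mu$ and $K_2=\operatorname{supp}\nu$, this can be done by capping and re-transforming:
\begin{enumerate}
\item Apply Theorem~\ref{thm:fundamental_OT} on $K_1\times K_2$. The resulting $\varphi_0$ is finite on the first projection of $\operatorname{supp}\eta^*$, which is all of $K_1$. Being an infimum of an equicontinuous family, it is continuous there.
\item Set $\psi(y):=\min_{x\in K_1}\{\widetilde c_{\pi_0}(x,y)-\varphi_0(x)\}$ for all $y\in\mathbb R^d$; this is finite. Cap it: $\hat\psi:=\min\{\psi,\max_{K_2}\psi\}$.
\item Put $\hat\varphi(x):=\inf_{y\in\mathbb R^d}\{\widetilde c_{\pi_0}(x,y)-\hat\psi(y)\}$.
\end{enumerate}
Then $\hat\varphi$ is finite, because $\inf_y\widetilde c_{\pi_0}(x,y)>-\infty$ (use $W_{\pi_0}\ge 0$ and the bound on $\nabla_xV$). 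The inequality holds everywhere by construction. On $\operatorname{supp}\eta^*$ one has $\hat\psi=\psi$ and $\hat\varphi\ge\varphi_0$, hence equality, and your Step 4 then goes through.

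Compactness is essential, and the finite-second-moment alternative in Step 1 cannot replace it. Take $d=1$, $V\equiv 0$, $\mu$ standard Gaussian and $\nu$ uniform on $[0,1]$. Then $\tfrac12|x-y|^2\le\widetilde c_{\pi_0}(x,y)\le\tfrac12|x-y|^2+\lambda$. Testing the global inequality at $(x,-1)$ and $(0,y)$ shows that every equality point $(x,y)$ with $x<0$ satisfies $|x|\le\tfrac12+2\lambda-\varphi(0)-\psi(-1)$. This contradicts equality $\eta^*$-a.e., since $\mu$ charges every half-line $(-\infty,-R)$. So for non-compact supports, real-valued potentials as in the statement need not exist.
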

\begin{proof}
    Although one can find the proof in \cite{cabrera2021optimal} or in a recently submitted article (that introduces a variant of OT) \cite{cabrera2026minimax}, we shall provide at least a sketch of the proof to shed light on the ideas stemming from convex optimization and linear programming. 

    Apply Lagrangians multipliers $\varphi, \psi, \lambda$ in the Lagrangian functional, $\Lambda$.  
Suppose $\pi_{0}$ is a miminizer of \eqref{eqn: OT paths}. Using Lagrangian multipliers, $\varphi, \psi\in C_{c}^{0}(\mathbb{R}^{d})\times C_{c}^{0}(\mathbb{R}^{d})$, consider the Lagrangian function
\begin{align*}
\Lambda(\pi, \varphi, \psi, \lambda)&:=\int_{\Omega}c_{\pi_{0}}(\gamma)d\pi(\gamma)+\int_{\mathbb{R}^{d}}\varphi(x)d\mu(x)-\int_{\mathbb{R}^{d}}\varphi(\gamma(0))d\pi(\gamma)\\
&+\int_{\Omega}\psi(y)d\nu(y)-\int_{\Omega}\psi(\gamma(1))d\pi(\gamma)+\int_{\Omega}\lambda(\gamma)d\pi(\gamma).
\end{align*}
Rearranging, we get
\begin{align*}
    \Lambda(\pi, \varphi, \psi, \lambda)&:=\int_{\Omega}c(\gamma)d\pi(\gamma)+\frac{1}{2}\int_{\Omega}\int_{\Omega}\mathcal{U}(\gamma, \sigma)d\pi(\gamma)d\pi(\sigma)\\
    &+\int_{\mathbb{R}^{d}}\varphi(x)d\mu(x)+\int_{\mathbb{R}^{d}}\psi(y)d\nu(y)+\int_{\Omega}\lambda(\gamma)-\left(\varphi(\gamma(0))+\psi(\gamma(1))\right)d\pi(\gamma)\\\
  &=\int_{\Omega}c(\gamma)d\pi(\gamma)+\frac{1}{2}\int_{\Omega}\int_{\Omega}\mathcal{U}(\gamma,\sigma)d\pi(\gamma)d\pi(\sigma)\\
  &+\int\varphi(x)d\mu(x)+\int\psi(y)d\nu(y)\int_{\Omega}\lambda(\gamma)-\left(\varphi(\gamma(0))+\psi(\gamma(1))\right)d\pi(\gamma)
\end{align*}
where we included the marginal constraints $(e_{0})_{\sharp}\pi=\mu$ and $(e_{1})_{\sharp}\pi=\nu$, and the nonnegativity constraint due to the measure, $\int \lambda(\gamma) d\pi(\gamma)$.  Then we compute the gradient with respect to $\pi$ in $\gamma$. Note that we get linear conditions on $\pi$. So according to \cite[Corollary 4.5]{cabrera2021optimal}, we take a ``smooth" curve $\pi(s)$ and take derivatives to get an expression regarding  tangent vectors $\dot{\pi}$ and an expression in the functional \eqref{eqn: OT paths} term regarding an integral against $\pi_{0}$, namely, 
\begin{align*}
    \frac{d}{ds}\Big|_{s=0}\Lambda(\pi(s), \varphi, \psi, \lambda)&=\int_{\Omega}c(\gamma)d\dot{\pi}(\gamma)+\;\int_{\Omega}\int_{\Omega}\mathcal{U}(\gamma, \sigma)d\pi_{0}(\sigma)d\dot{\pi}(\gamma)\\
    &+\int_{\Omega}\lambda(\gamma)-\left(\varphi(\gamma(0))+\psi(\gamma(1))\right)d\dot{\pi}(\gamma).
\end{align*}
Therefore, since we have a \emph{critical point}, this indicates that we get a zero functional. Then minimality---the KKT conditions \cite{rockafellar2015convex}, actually---there exist a pair $\varphi, \psi$ and $\lambda\geq 0 $ such that 
\begin{align*}
c(\gamma)+\int_{\Omega}\mathcal{U}(\gamma, \sigma)d\pi_{0}(\sigma)+\lambda(\gamma)-\left(\varphi(\gamma(0))+\psi(\gamma(1))\right)&=0.
\end{align*}
Moreover, $\lambda\equiv 0$ in the support of $\pi$.

Notice that from the fact that $\lambda$ is positive in the above equality, one can see then that the sum of the $\varphi$ and $\psi$ is less than the integral term, $c(\gamma)+\int\mathcal{U}d\pi$, everywhere, and exactly equal wherever $\lambda$ vanishes in the support of $\pi$.
\end{proof}
In the next section, we formulate a similar construction of that of Evans, and this shows that equality holds, \eqref{eq:deterministic endpoint equality}, on the endpoints selected by the optimal path measure and consequently identifies the endpoint coupling induced by $\pi_0$ as an optimal coupling for the effective cost $\widetilde c_{\pi_0}$.

\subsection{Optimal Transport with paths and eikonal sub-solutions}\label{OT and eikonal subsolutions} We are in a position to apply the heuristics calculations of Evans \cite{evans1997monge} to our problem. We execute these calculation to  optimal transport and gain a conservative vector field. In addition, we establish that the functional \eqref{self consistent Gaussian potential}  satisfies a viscosity subsolution condition for the eikonal equation \cite{crandall1983viscosity} (see Corollary \ref{viscosity solution}). Soner and Yan in \cite{soner2024viscosity} applied the Wasserstein metric \eqref{eq:OT} with the quadratic cost to get an eikonal type equation from Hamilton-Jacobi dynamic equations. In particular, they obtained a viscosity solution of this eikonal type equation. In the numerical analysis aspect of optimal transport, Hamfeldt-Turnquist in \cite{hamfeldt2021convergence} analyze an optimal transport problem on the sphere. Despite the absence of a comparison principle and uniqueness, they obtain a convergent PDE-based numerical method, and approximate a Monge-Ampere type equation as well as an eikonal one.

This idea of associating optimal transport with partial differential equations is not new at all; in fact, it is quite organic. The topic of applying optimal transport with PDEs---specifically the Monge-Ampere type PDE---to study the issues arising from existence and regularity is due to several researchers, namely; to name a fair few are the following, Caffarelli \cite{Caffarelli1992}-\cite{caffarelli1999monge}, Figalli-Kim-McCann \cite{Figalli2013} , Guillen-Kitagawa \cite{guillen2015local}, L. Ambrosio, Y. Bremier, L.C. Evans, M. Feldman \cite{FeldmanMcCann2002}, W. Gangbo, R. McCann, Ma-N. Trudinger- X.J. Wang \cite{MaTrudingerWang2005}, X.J. Wang \cite{TrudingerWang2001}, and J. Urbas \cite{Urbas1997}. Of course, many other researchers have contributed to this field, but the topic of optimal transport and PDEs is rather vast and somewhat fragmented; so we selected to include a short list.

The following theorem illustrates a further application of connecting optimal transport techniques with partial differential equations. We apply optimal transport with paths and congestion to acquire a viscosity sub-solution. Consider the interaction, congestion term via the Gaussian interaction
\begin{align}\label{congestion term}
    \int_{\Omega}\mathcal{U}(\gamma, \sigma)d\pi(\sigma):=\int_{\Omega}\int_{0}^{1}\lambda e^{-\beta|\gamma(t)-\sigma(t)|^{2}}\;dtd\pi(\sigma),\; \lambda,\beta>0.
\end{align}

To prove that the latter property yields a viscosity subsolution of the eikonal equation, we develop an argument analogous to Evans' approach based on cyclical monotonicity; see \cite[Sec.~8.3]{evans1997monge}. We first recall the deterministic structure of the optimal path plan established in \cite{cabrera2021optimal}.

Let $\pi_0$ be an optimal path plan for \eqref{eqn: OT paths}. By
\cite{cabrera2021optimal}, suppose that $\pi_0$ is induced by a Borel
path map, namely
\begin{align}\label{deterministic plan}
    \pi_0=(x\mapsto\gamma_x)_{\#}\mu,
\end{align}
where
\begin{align}\label{conditions of paths}
\gamma_x(0)=x,
\qquad
\gamma_x(1)=T^*(x),
\qquad
T^*_{\#}\mu=\nu.
\end{align}
Let
\[
e_t(\gamma):=\gamma(t),
\qquad 0\leq t\leq1,
\]
and define
\begin{align}\label{time marginals optimal plan}
    \mu_t:=(e_t)_{\#}\pi_0,
    \qquad 0\leq t\leq1.
\end{align}
In particular,
\[
\mu_0=\mu,
\qquad
\mu_1=\nu.
\]

The endpoint plan associated with $\pi_0$ is
\begin{align}\label{endpoint plan}
    \eta_0
    &:=(e_0,e_1)_{\#}\pi_0
      =(\operatorname{id},T^*)_{\#}\mu.
\end{align}
By the effective cost function which induces its corresponding ebd-point cost function in
\eqref{effective cost}--\eqref{end-point cost}, $\eta_0$ is optimal
for the endpoint cost
\[
\widetilde c_{\pi_0}(x,y)
:=
\inf_{\substack{\gamma(0)=x\\\gamma(1)=y}}
c_{\pi_0}(\gamma).
\]
Moreover, since $\gamma_{x}$ is the chosen minimal path satisfying the above constraints, we  \eqref{eq:deterministic endpoint equality}. Consequently,
$\operatorname{spt}(\eta_0)$ is
$\widetilde c_{\pi_0}$-cyclically monotone.

Next, let $r$ be a constant speed parameterization
\[
r:[0,1]\longrightarrow X
\]
which is a $C^1$ closed curve such that
\[
r(0)=r(1)
\]
and suppose that its coordinate points are concentrated on the support of $\eta_{0}$:
\[
(r(s),T^*(r(s)))\in\operatorname{spt}(\eta_0)
\qquad\text{for every }s\in[0,1].
\]
Thus the relevant closed curve is the curve $r$ in the source
space; it is not an optimal trajectory $\gamma_x$.

For $N\geq1$, mimicking Evan's construction \cite{evans1997monge}, set the source and target points as follows:
\[
x_k:=r\left(\frac{k}{N}\right),
\qquad
y_k:=T^*(x_k),
\qquad
k=0,\ldots,N-1.
\]
Fix an integer $\ell$, with indices understood modulo $N$. Since the support of the projector measure $\eta_{0}$,
$\operatorname{spt}(\eta_0)$ is
$\widetilde c_{\pi_0}$-cyclically monotone, the cyclic permutation
$k\mapsto k+\ell$ gives
\begin{align}\label{discrete-cyclic-inequality}
\sum_{k=0}^{N-1}
\widetilde c_{\pi_0}(x_k,y_k)
\leq
\sum_{k=0}^{N-1}
\widetilde c_{\pi_0}(x_k,y_{k+\ell}).
\end{align}
Equivalently,
\begin{align}\label{discrete-cyclic-inequality-2}
\sum_{k=0}^{N-1}
\widetilde c_{\pi_0}
\bigl(x_k,T^*(x_k)\bigr)
\leq
\sum_{k=0}^{N-1}
\widetilde c_{\pi_0}
\bigl(x_k,T^*(x_{k+\ell})\bigr).
\end{align}

Notice that the path appearing implicitly on the right-hand side is
not $\gamma_{x_{k+\ell}}$. Indeed, it is its initial point:
\[
\gamma_{x_{k+\ell}}(0)=x_{k+\ell},
\]
whereas the competing endpoint pair in
\eqref{discrete-cyclic-inequality-2} is
\[
\bigl(x_k,T^*(x_{k+\ell})\bigr).
\]
If a path representative is required, one must instead choose a
minimizer
\[
\gamma_{k,\ell}^{\,N}
\in
\operatorname*{argmin}_{\substack{
\gamma(0)=x_k\\
\gamma(1)=T^*(x_{k+\ell})}}
c_{\pi_0}(\gamma),
\]
so that
\[
c_{\pi_0}(\gamma_{k,\ell}^{\,N})
=
\widetilde c_{\pi_0}
\bigl(x_k,T^*(x_{k+\ell})\bigr).
\]

As $\widetilde c_{\pi_0}$ is continuously
differentiable \cite{cabrera2021optimal} in its first variable on a neighborhood of the
relevant endpoint pairs and that $T^*\circ r$ is continuous.
Choose $\ell=\ell_N$ such that
\[
\frac{\ell_N}{N}\longrightarrow\varepsilon.
\]
Dividing \eqref{discrete-cyclic-inequality-2} by $N$ and passing to
the limit in the corresponding Riemann sums yields
\begin{align}\label{Evans limiting inequality}
\begin{split}
\int_0^1
\widetilde c_{\pi_0}
\bigl(r(s),T^*(r(s))\bigr)\;ds
&\leq
\int_0^1
\widetilde c_{\pi_0}
\bigl(r(s),T^*(r(s+\varepsilon))\bigr)\;ds\\
&=\int_0^1 \widetilde{c}_{\pi_{0}}\bigr(r(s-\varepsilon), T^*(r(s))\bigl)\;ds\\
&=:I(\varepsilon),
\end{split}
\end{align}
where the parameter is interpreted periodically.

Notice that we applied the same trick of changing variables $s\mapsto s-\varepsilon$ on the right-hand side of \eqref{Evans limiting inequality}. Consequently, since $T^*$ is the optimal mapping, we acquire

\[
I(0)\leq I(\varepsilon),
\]

Thus $\varepsilon=0$ is a minimum of $I$. Under the above
regularity assumptions, differentiation under the integral sign
yields
\begin{align*}
0
=
I'(0)
&=
-\int_0^1
D_x\widetilde c_{\pi_0}
\bigl(r(s),T^*(r(s))\bigr)
\cdot r'(s)\,ds.
\end{align*}
Consequently,
\begin{align}\label{zero circulation effective field}
\oint_{\mathcal C}
D_x\widetilde c_{\pi_0}(x,T^*(x))\cdot dx
=0.
\end{align}

Hence, provided this identity holds for every $C^1$ closed curve in
a simply connected open set $U$ and the vector field
\[
F_{\pi_0}(x)
:=
D_x\widetilde c_{\pi_0}(x,T^*(x))
\]
is continuous on $U$, the classical conservative-field from theorem from calculus
implies that there exists a scalar potential
$\varphi:U\to\mathbb R$ such that
\begin{align}\label{effective conservative potential}
\nabla\varphi(x)
=
D_x\widetilde c_{\pi_0}(x,T^*(x)).
\end{align}

The preceding Evans-type argument and the following Gaussian
calculation play distinct but complementary roles. Cyclical
monotonicity of the optimal endpoint plan shows that the effective
endpoint field
\[
x\longmapsto
D_x\widetilde c_{\pi_0}(x,T^*(x))
\]
has zero circulation and is therefore conservative under the stated
regularity and topological hypotheses. This line of argument is what will make the derivation of a viscosity subsolution of an eikonal equation possible.


\subsection{Gaussian Interaction and PDE structure}\label{sec:gaussian-eikonal}


This section builds on Section \ref{OT and eikonal subsolutions} to generate the necessary and sufficient conditions to establish that $W_{\pi}$, as given iin \eqref{self consistent Gaussian potential}, satisfies a viscosity subsolution of eikonal equation. For the optimal path plan $\pi_0$, recall from
\eqref{time marginals optimal plan} that $\mu_t=(e_t)_\#\pi_0,
\;
t\in[0,1],$ and recall the Gaussian kernel \eqref{kernel}, is given as the Gaussian distribution $\mathcal K_G(z)
=
\lambda e^{-\beta|z|^2},
\;
\lambda,\beta>0.$

We define the self-consistent Gaussian interaction potential by
\begin{align}
\label{self consistent Gaussian potential}
\begin{split}
W_{\pi_0}(t,x)
&:=
(\mathcal K_G*\mu_t)(x)
\\
&=
\lambda
\int_{\mathbb R^d}
e^{-\beta|x-y|^2}\,d\mu_t(y).
\end{split}
\end{align}

The potential \eqref{self consistent Gaussian potential} is the
interaction potential generated by the time marginals of the optimal
path plan. It should be distinguished from the conservative field
obtained from endpoint cyclical monotonicity. In particular, without
an additional identity relating
\[
D_x\widetilde c_{\pi_0}\bigl(x,T^*(x)\bigr)
\]
to
\[
\nabla_x W_{\pi_0}(t,x),
\]
these two fields should not be identified.

The eikonal subsolution property established below for
$W_{\pi_0}$ follows instead from the regularity and uniform gradient
estimate for the Gaussian interaction potential
\eqref{self consistent Gaussian potential}.

\begin{proposition}
\label{prop:effective Gaussian action}
Let $\pi_0$ be an optimal path plan satisfying \eqref{eqn: OT paths} with Gaussian kernel, $\mathcal{K}_{G}$ \eqref{kernel}-\eqref{interaction functional}, and let $\mu_t$ and
$W_{\pi_0}$ be defined by
\eqref{time marginals optimal plan} and
\eqref{self consistent Gaussian potential}. Then, for every
admissible path $\gamma$, we acquire an effective cost relative to this Gaussian interaction,
\begin{align}\label{effective action Gaussian}
c_{\pi_0}(\gamma)
=
\int_0^1
\left[
\frac12|\dot\gamma(t)|^2
-V(t,\gamma(t))+
W_{\pi_0}(t,\gamma(t))
\right]\,dt.
\end{align}
Consequently, the effective endpoint cost is
\begin{align}\label{effective endpoint Gaussian}
\widetilde c_{\pi_0}(x,y)
=
\inf_{\substack{\gamma(0)=x\\\gamma(1)=y}}
\int_0^1
\left[
\frac12|\dot\gamma(t)|^2 -V(t,\gamma(t))
+
W_{\pi_0}(t,\gamma(t))
\right]\,dt.
\end{align}
\end{proposition}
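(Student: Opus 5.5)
The plan is to compute directly from the definition of the effective cost \eqref{effective cost}, substituting the Gaussian interaction \eqref{interaction functional} and the path cost \eqref{kinetic energy}, and then to exchange the order of integration. For an admissible path $\gamma$ (so $\dot\gamma\in L^2$ and $c(\gamma)<\infty$), we write
\[
c_{\pi_0}(\gamma)=\int_0^1\Bigl(\tfrac12|\dot\gamma(t)|^2-V(t,\gamma(t))\Bigr)dt+\int_\Omega\int_0^1\lambda e^{-\beta|\gamma(t)-\sigma(t)|^2}\,dt\,d\pi_0(\sigma).
\]
The key step is to justify Fubini--Tonelli on $[0,1]\times\Omega$ for the second term. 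The integrand $(t,\sigma)\mapsto\lambda e^{-\beta|\gamma(t)-\sigma(t)|^2}$ is nonnegative and bounded by $\lambda$. It is also jointly measurable, because $(t,\sigma)\mapsto\sigma(t)$ is continuous on $[0,1]\times\Omega$ with the uniform topology (paths are absolutely continuous, hence continuous) and $\gamma$ is continuous. Since $\pi_0$ is a probability measure, Tonelli applies and the integral is finite, bounded by $\lambda$. Therefore
\[
\int_\Omega\mathcal U(\gamma,\sigma)\,d\pi_0(\sigma)=\int_0^1\int_\Omega\lambda e^{-\beta|\gamma(t)-\sigma(t)|^2}\,d\pi_0(\sigma)\,dt .
\]

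Next, for each fixed $t$, the inner integral depends on $\sigma$ only through $e_t(\sigma)=\sigma(t)$. By the change-of-variables formula for push-forwards and the definition $\mu_t=(e_t)_\#\pi_0$ in \eqref{time marginals optimal plan},
\[
\int_\Omega\lambda e^{-\beta|\gamma(t)-e_t(\sigma)|^2}\,d\pi_0(\sigma)=\lambda\int_{\mathbb R^d}e^{-\beta|\gamma(t)-y|^2}\,d\mu_t(y)=W_{\pi_0}(t,\gamma(t)),
\]
using \eqref{self consistent Gaussian potential}. Inserting this back gives \eqref{effective action Gaussian}. Since the interaction contribution is finite, no $\infty-\infty$ issue arises: when $\dot\gamma\notin L^2$, both sides equal $+\infty$ under the convention \eqref{kinetic energy}, provided $V(t,\gamma(t))$ is integrable along continuous paths. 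This holds under the standing hypotheses on $V$, for instance local boundedness.

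Finally, \eqref{effective endpoint Gaussian} follows immediately. The identity \eqref{effective action Gaussian} holds pathwise for every admissible $\gamma$, so we take the infimum over paths with $\gamma(0)=x$ and $\gamma(1)=y$ in the definition \eqref{end-point cost}. The only genuine obstacle is the measurability and Fubini justification in the first step, together with ensuring that the $V$-term is well defined so that the splitting of the integral is legitimate. Both are handled by the boundedness of the Gaussian kernel and the continuity of the evaluation map.
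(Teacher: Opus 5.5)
Your proof is correct and follows the paper's argument: substitute the Gaussian interaction into the effective cost $c_{\pi_0}(\gamma)=c(\gamma)+\int_\Omega\mathcal U(\gamma,\sigma)\,d\pi_0(\sigma)$, swap the $t$- and $\sigma$-integrals by Fubini--Tonelli, identify the inner integral with $W_{\pi_0}(t,\gamma(t))$ via $\mu_t=(e_t)_\#\pi_0$, and then take the infimum over paths with fixed endpoints. Your extra justifications (joint measurability of $(t,\sigma)\mapsto\sigma(t)$, the bound $\lambda$ on the integrand, and the $+\infty$ convention when $\dot\gamma\notin L^2$) make explicit what the paper leaves implicit, without changing the route.
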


\begin{proof}
By definition of the effective path cost, we have that
\[
c_{\pi_0}(\gamma)
=
\frac12\int_0^1|\dot\gamma(t)|^2-V(t,\gamma(t))dt
+
\int_{\Omega}
\mathcal U(\gamma,\sigma)\,d\pi_0(\sigma).
\]
For the Gaussian interaction \eqref{kernel}-\eqref{interaction functional}, we have
\[
\mathcal{U}(\gamma,\sigma)
=
\int_0^1
\lambda
e^{-\beta|\gamma(t)-\sigma(t)|^2}\,dt.
\]
Then Fubini's and Tonelli's theorem applies to show
\begin{align*}
\int_\Omega
\mathcal U(\gamma,\sigma)\,d\pi_0(\sigma)
&=
\int_0^1
\int_\Omega
\lambda
e^{-\beta|\gamma(t)-\sigma(t)|^2}
\,d\pi_0(\sigma)\,dt.
\end{align*}
Since $\mu_t=(e_t)_\#\pi_0,$ the change-of-variables formula for push-forward measures gives \[ \int_\Omega \lambda e^{-\beta|\gamma(t)-\sigma(t)|^2} \,d\pi_0(\sigma) = \int_{\mathbb R^d} \lambda e^{-\beta|\gamma(t)-y|^2} \,d\mu_t(y) = W_{\pi_0}(t,\gamma(t)). \] Recalling that the single-path action is \[ c(\gamma) = \int_0^1 \left[ \frac12 |\dot{\gamma}(t)|^2 - V\bigl(t,\gamma(t)\bigr) \right] dt, \] the effective cost \eqref{effective cost} therefore becomes \begin{align} \label{effective action Gaussian} c_{\pi_0}(\gamma) &= \int_0^1 \left[ \frac12 |\dot{\gamma}(t)|^2 - V\bigl(t,\gamma(t)\bigr) + W_{\pi_0}\bigl(t,\gamma(t)\bigr) \right] dt. \end{align} Taking the infimum over all paths joining $x$ to $y$ yields the corresponding effective endpoint cost \begin{align} \label{effective endpoint Gaussian} \widetilde c_{\pi_0}(x,y) &= \inf_{\substack{\gamma(0)=x\\ \gamma(1)=y}} \int_0^1 \left[ \frac12 |\dot{\gamma}(t)|^2 - V\bigl(t,\gamma(t)\bigr) + W_{\pi_0}\bigl(t,\gamma(t)\bigr) \right] dt. \end{align} \end{proof}

The present paper develops this observation at the level of the optimal path measure. In particular, after expressing the interaction through the time marginals $\mu_t=(e_t)_{\#}\pi_0,$
we derive the corresponding Euler--Lagrange dynamics and show that the associated phase-space marginals satisfy a nonlocal kinetic equation.


\begin{theorem}
\label{thm:conservative initial momentum}
Let $\eta_0=(\operatorname{id},T^*)_{\#}\mu$
be the endpoint plan associated with $\pi_0$, and suppose that
$\eta_0$ is optimal for the effective endpoint cost
$\widetilde c_{\pi_0}$ \eqref{end-point cost}. Assume that the Evans-type cyclic-shift
argument established above applies on a simply connected open set
$U$, so that
\begin{align}\label{zero circulation theorem Gaussian}
\oint_{\mathcal C}
D_x\widetilde c_{\pi_0}(x,T^*(x))\cdot dx
=
0
\end{align}
for every $C^1$ closed curve $\mathcal C\subset U$.

Assume, in addition, that $\widetilde c_{\pi_0}$ is differentiable \cite{cabrera2021optimal}
in its endpoint variables and that the minimizing trajectory joining
$x$ to $T^*(x)$ is unique and sufficiently regular.
Then there exists a scalar potential
$\varphi:U\to\mathbb R$, unique up to an additive constant, such that
\begin{align}\label{Evans potential effective cost}
\nabla\varphi(x)
=
D_x\widetilde c_{\pi_0}(x,T^*(x)).
\end{align}
Moreover,
\begin{align}\label{initial momentum identity}
D_x\widetilde c_{\pi_0}(x,T^*(x))
=
-\dot\gamma_x(0),
\end{align}
and therefore
\begin{align}\label{conservative initial velocity}
\nabla\varphi(x)
=
-\dot\gamma_x(0).
\end{align}
Thus the initial momentum field of the optimal trajectories is
conservative.
\end{theorem}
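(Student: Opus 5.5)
The plan has two independent parts: existence of the potential from the zero-circulation hypothesis, and the identification of $D_x\widetilde c_{\pi_0}(x,T^*(x))$ with the negative initial velocity by a first-variation (envelope) argument. The conservative field $\dot\gamma_x(0)$ then follows by combining them.

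\emph{Existence of the potential.} Take $F_{\pi_0}(x):=D_x\widetilde c_{\pi_0}(x,T^*(x))$, which is continuous on $U$ by the assumed differentiability of $\widetilde c_{\pi_0}$ and the continuity of $T^*$ used in the Evans-type argument. Fix a base point $x_0\in U$ and set
\[
\varphi(x):=\int_{\sigma_x}F_{\pi_0}\cdot dr,
\]
where $\sigma_x$ is any piecewise $C^1$ path in $U$ from $x_0$ to $x$. Two such paths form a closed curve, and \eqref{zero circulation theorem Gaussian} makes $\varphi$ well defined. The hypothesis is stated for $C^1$ closed curves; piecewise $C^1$ loops are handled by smoothing the corners and using continuity of $F_{\pi_0}$. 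Differentiating along straight segments $x\mapsto x+he_i$ and applying the fundamental theorem of calculus with continuity of $F_{\pi_0}$ gives $\nabla\varphi=F_{\pi_0}$, which is \eqref{Evans potential effective cost}. Uniqueness up to an additive constant holds because $U$ is connected and a function with zero gradient on a connected open set is constant.

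\emph{Initial momentum identity.} By Proposition \ref{prop:effective Gaussian action}, $\widetilde c_{\pi_0}$ is the value function of the Lagrangian $L(t,x,v)=\tfrac12|v|^2-V(t,x)+W_{\pi_0}(t,x)$. Let $y=T^*(x)$ and let $\gamma_x$ be the unique minimizer. By Theorem \ref{thm:endpoint reduction}, \eqref{eq:deterministic endpoint equality} identifies $\gamma_x$ with the optimal trajectory. For $h\in\mathbb R^d$, use the competitor
\[
\gamma^h(t)=\gamma_x(t)+(1-t)h,
\]
which joins $x+h$ to $y$. This gives the upper bound
\[
\widetilde c_{\pi_0}(x+h,y)-\widetilde c_{\pi_0}(x,y)\le c_{\pi_0}(\gamma^h)-c_{\pi_0}(\gamma_x).
\]
Expanding to first order and integrating by parts, using the Euler--Lagrange equation $\ddot\gamma_x=-\nabla_x V+\nabla_x W_{\pi_0}$ (valid since $\gamma_x$ is a regular minimizer with fixed endpoints), leaves only the boundary term:
\[
\bigl[\dot\gamma_x(t)\cdot(1-t)h\bigr]_0^1=-\dot\gamma_x(0)\cdot h.
\]
Since $\widetilde c_{\pi_0}(\cdot,y)$ is assumed differentiable at $x$, applying the one-sided bound to both $h$ and $-h$ forces $D_x\widetilde c_{\pi_0}(x,y)=-\dot\gamma_x(0)$. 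This is the usual superdifferential argument, in which the comparison function touches from above. Substituting into \eqref{Evans potential effective cost} then gives \eqref{conservative initial velocity}.

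\emph{Main obstacle.} The delicate point is justifying the first-order expansion of $c_{\pi_0}(\gamma^h)$. This requires $V$ and $W_{\pi_0}$ to be $C^1$ in $x$ with gradients bounded along a neighborhood of $\gamma_x$. For $W_{\pi_0}$ this follows from the Gaussian kernel: its gradient is bounded by $2\lambda\beta\sup_r re^{-\beta r^2}$ uniformly in $t$. For $V$ it comes from the standing hypotheses. The expansion also needs $\gamma_x\in C^1$ so that the boundary terms make sense. That regularity comes from the du Bois-Reymond form of the Euler--Lagrange equation, which yields $\dot\gamma_x$ absolutely continuous once the potential gradients are bounded. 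I would verify these points first and then run the envelope computation.
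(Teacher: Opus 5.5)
Your proposal is correct. The first half is the same as the paper's step: the paper cites the classical zero-circulation criterion, and your line-integral construction spells that criterion out. Two small remarks on it:
\begin{itemize}
\item Continuity of $x\mapsto D_x\widetilde c_{\pi_0}(x,T^*(x))$ does not follow from mere differentiability of $\widetilde c_{\pi_0}$. It has to be assumed, as the paper's proof does explicitly: joint $C^1$ regularity of $\widetilde c_{\pi_0}$ in the first variable together with continuity of $T^*$ on $U$.
\item Your construction uses only connectedness of $U$, since zero circulation is assumed for every closed curve.
\end{itemize}

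For the momentum identity you take a genuinely different route from the paper.

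\textbf{The paper's route.} It invokes the first-variation formula $\delta S=[D_vL\cdot\delta\gamma]_0^1$ along a smooth family of minimizing trajectories with perturbed endpoints. This tacitly requires that minimizers depend differentiably on their endpoints, which is a nondegeneracy, no-conjugate-point type condition. In return it yields $D_x\widetilde c_{\pi_0}=-\dot\gamma(0)$ and $D_y\widetilde c_{\pi_0}=\dot\gamma(1)$ in one stroke.

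\textbf{Your route.} You use the single explicit competitor $\gamma_x(t)+(1-t)h$, expand to first order, and integrate by parts against the frozen-field Euler--Lagrange equation. This shows that $-\dot\gamma_x(0)$ is always a Fr\'echet supergradient of $\widetilde c_{\pi_0}(\cdot,T^*(x))$ at $x$. The assumed differentiability then forces equality once you test with $\pm h$.

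\textbf{What it needs and buys.} Your argument needs only three things:
\begin{itemize}
\item $\gamma_x$ is a $C^1$ endpoint minimizer of $c_{\pi_0}$;
\item $\gamma_x$ satisfies $\ddot\gamma_x=-\nabla_xV+\nabla_xW_{\pi_0}$;
\item $\nabla_xV$ and $\nabla_xW_{\pi_0}$ are continuous and bounded near $\gamma_x$, which $V\in C^1$ and the Gaussian gradient and Hessian bounds supply.
\end{itemize}
It never uses a family of minimizers, so it is the more robust argument. The same competitor with $t k$ in place of $(1-t)h$ would also recover the $D_y$ identity.

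\textbf{A caveat shared with the paper.} Identifying the plan's trajectory $\gamma_x$ with the endpoint minimizer comes from the endpoint-reduction theorem only for $\mu$-a.e.\ $x$. For every $x\in U$ it therefore rests on the uniqueness hypothesis in the statement.
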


\begin{proof}
Equation \eqref{zero circulation theorem Gaussian}, together with
continuity of the vector field
\[
x\longmapsto
D_x\widetilde c_{\pi_0}(x,T^*(x)),
\]
and simple connectedness of $U$, imply by the classical
zero-circulation criterion that there exists $\varphi$ satisfying
\eqref{Evans potential effective cost}.

It remains to identify the derivative of the endpoint value function.
Let $\gamma=\gamma_{x,y}$ denote the minimizing trajectory joining $x$ to $y$, and let
\[
S(x,y)
:=
\widetilde c_{\pi_0}(x,y)
=
\int_0^1
L(t,\gamma(t),\dot\gamma(t))\,dt.
\]
For a smooth family of minimizing trajectories corresponding to
perturbed endpoints, the standard first-variation formula, after integratin by parts,  gives
\begin{align*}
\delta S
&=
\left[
D_vL(t,\gamma(t),\dot\gamma(t))
\cdot\delta\gamma(t)
\right]_{t=0}^{t=1}.
\end{align*}
Since the spatial derivative is such that 
\[
D_vL(t,q,v)=v,
\]
then we obtain
\[
\delta S
=
\dot\gamma(1)\cdot\delta y
-
\dot\gamma(0)\cdot\delta x.
\]
Consequently,
\[
D_x\widetilde c_{\pi_0}(x,y)
=
-\dot\gamma_{x,y}(0),
\]
and
\[
D_y\widetilde c_{\pi_0}(x,y)
=
\dot\gamma_{x,y}(1).
\]
Setting $y=T^*(x)$ gives
\[
D_x\widetilde c_{\pi_0}(x,T^*(x))
=
-\dot\gamma_x(0).
\]
Combining this identity with
\eqref{Evans potential effective cost} proves
\eqref{conservative initial velocity}.
\end{proof}


\begin{remark}
    Thus $\varphi$ and $W_{\pi_0}$ are not, in general, the same
potential. The former generates the conservative initial momentum
field selected by endpoint optimality and cyclical monotonicity,
whereas the latter generates the self-consistent force acting along
the optimal trajectories.
\end{remark}

Armed with this knowledge we establish a viscosity subsolution to the eikonal equation.

\section{From Optimal Path Measures to Phase-Space Dynamics}\label{sec:path-to-phase-space}
\label{subsec:path-to-phase-space-rigorous}

We now make precise the passage from the optimal path-space problem
\eqref{eqn: OT paths} to a kinetic equation on phase space.  There are
three points that require some care. In the first place, we retain the ambient path space $\Omega$ introduced
earlier in the manuscript, namely the space of absolutely continuous
paths $\gamma:[0,1]\to\mathbb R^d$. The kinetic cost is extended-valued:
it is finite only when $\dot\gamma\in L^2(0,1;\mathbb R^d)$. Hence any
optimal path measure $\pi_0$ with finite value $\mathcal J(\pi_0)$, defined in \eqref{eq:path-energy-normalized}, is
concentrated on the finite-energy Hilbert subset
$W^{1,2}([0,1];\mathbb R^d)$. On this subset, however,
$\dot\gamma(t)$ is still not canonically defined at every fixed time
$t$ until additional regularity is proved. Thus the pointwise velocity
evaluation $\dot e_t(\gamma)=\dot\gamma(t)$ will only be introduced
after the Euler--Lagrange equation has yielded such regularity.

In the second place, the natural phase-space object generated by the optimal plan is
a probability measure on $\mathbb R^{2d}$ and need not possess a
density with respect to $\mathcal L^{2d}$.

Third, the sign of the force field must be derived from the precise
Lagrangian defining the path-space problem.

We therefore proceed in two stages.  We first establish regularity of
the optimal trajectories from the Euler--Lagrange equation.  We then
define the phase-space lift on those regular trajectories.

\subsection{The effective action and the Gaussian induced force}
\label{subsec:gaussian-effective-action}

We retain throughout this section the path space $\Omega$ introduced
above. Since the single-path kinetic cost is defined to be $+\infty$
whenever $\dot\gamma\notin L^2(0,1;\mathbb R^d)$, any minimizer
$\pi_0$ satisfying $\mathcal J(\pi_0)<\infty$ obeys
\begin{equation}
\label{eq:pi0-finite-energy-concentration}
\pi_0\!\left(W^{1,2}([0,1];\mathbb R^d)\right)=1.
\end{equation}
Thus the Hilbert space,  $W^{1,2}([0,1];\mathbb R^d)$ is the finite-energy subset on which
the optimal measure is concentrated; it is not a redefinition of the
ambient path space $\Omega$.

We consider the normalized path-space functional
\begin{equation}
\label{eq:path-energy-normalized}
\mathcal J(\pi)
=
\int_{\Omega}\int_0^1
\left[
\frac12|\dot\gamma(t)|^2-V(t,\gamma(t))
\right]dt\,d\pi(\gamma)
+
\frac12
\iint_{\Omega\times\Omega}\int_0^1
\mathcal K_G(\gamma(t)-\sigma(t))
\,dt\,d\pi(\gamma)d\pi(\sigma),
\end{equation}
where
\begin{equation}
\label{eq:gaussian-kernel-dynamics}
\mathcal K_G(z)=\lambda e^{-\beta|z|^2},
\qquad \lambda,\beta>0.
\end{equation}
Let $\pi_0\in\Pi_{\mathrm{path}}(\mu,\nu)$ be a minimizer and set
\begin{equation}
\label{eq:rho-path-marginal}
\mu_t:=(e_t)_{\#}\pi_0.
\end{equation}
The self-consistent interaction potential generated by $\pi_0$ is defined by \eqref{self consistent Gaussian potential}, namely, $W_{\pi_0}(t,x)
:=
\int_{\mathbb R^d}\mathcal K_G(x-y)\,d\mu_t(y).$
Since the derivative of the Gaussian 
\[
\nabla\mathcal K_G(z)
=
-2\lambda\beta z e^{-\beta|z|^2},
\]
we have
\begin{equation}
\label{eq:grad-W-gaussian}
\nabla_xW_{\pi_0}(t,x)
=
-2\lambda\beta
\int_{\mathbb R^d}(x-y)e^{-\beta|x-y|^2}\,d\mu_t(y).
\end{equation}
Define
\begin{equation}
\label{eq:exact-force-definition}
K_{\pi_0}(t,x)
:=
-\nabla_xV(t,x)+\nabla_xW_{\pi_0}(t,x).
\end{equation}
The next result quantifies the uniform regularity of the Gaussian interaction functional.
\begin{lemma}
\label{lem:gaussian-force-regularity}
For every $\rho\in\mathcal P(\mathbb R^d)$,  using \eqref{self consistent Gaussian potential} define $W_\rho(x)
:=
(\mathcal K_G*\rho)(x)
=
\lambda\int_{\mathbb R^d}
e^{-\beta|x-y|^2}\,d\rho(y).$

Then
\[
W_\rho\in C^\infty(\mathbb R^d),
\]
and, similarly, by \eqref{eq:grad-W-gaussian}, we get
$\nabla W_\rho(x)
=
-2\lambda\beta
\int_{\mathbb R^d}
(x-y)e^{-\beta|x-y|^2}
\,d\rho(y).$

Moreover,
\begin{align}
\label{Gaussian gradient bound}
\|\nabla W_\rho\|_{L^\infty(\mathbb R^d)}
\leq
\lambda\sqrt{\frac{2\beta}{e}},
\end{align}
and
\begin{align}
\label{Gaussian Hessian bound}
\|D^2W_\rho\|_{L^\infty(\mathbb R^d;\mathrm{op})}
\leq
2\lambda\beta.
\end{align}

Consequently, $\nabla W_\rho$ is globally Lipschitz, with Lipschitz
constant at most $2\lambda\beta$, uniformly in
$\rho\in\mathcal P(\mathbb R^d)$.
\end{lemma}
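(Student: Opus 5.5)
The plan is to reduce every assertion to pointwise bounds on the kernel $\mathcal K_G$ itself. Since $\rho$ is a probability measure, a uniform bound on a derivative of $\mathcal K_G$ passes directly to the corresponding derivative of $W_\rho$.

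\emph{Step 1: smoothness and the gradient formula.} Every partial derivative $\partial^\alpha\mathcal K_G(z)$ has the form $P_\alpha(z)e^{-\beta|z|^2}$ with $P_\alpha$ a polynomial, so it is bounded on $\mathbb R^d$. Fix a multi-index $\alpha$. The family $y\mapsto \partial^\alpha_x\mathcal K_G(x-y)$ is then dominated by the constant $\sup_z|\partial^\alpha\mathcal K_G(z)|$, which is $\rho$-integrable because $\rho(\mathbb R^d)=1$. By dominated convergence, differentiation under the integral sign is justified to every order. This gives $W_\rho\in C^\infty(\mathbb R^d)$ and $\partial^\alpha W_\rho=(\partial^\alpha\mathcal K_G)*\rho$. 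In particular, $\nabla\mathcal K_G(z)=-2\lambda\beta z e^{-\beta|z|^2}$ yields the stated gradient formula, consistent with \eqref{eq:grad-W-gaussian}.

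\emph{Step 2: the gradient bound \eqref{Gaussian gradient bound}.} Write $|\nabla W_\rho(x)|\le \int|\nabla\mathcal K_G(x-y)|\,d\rho(y)\le \sup_z|\nabla\mathcal K_G(z)|$. It then suffices to maximize the one-variable function $r\mapsto 2\lambda\beta\, r e^{-\beta r^2}$ over $r\ge0$. Its derivative vanishes at $r=1/\sqrt{2\beta}$. The maximum value is
\[
2\lambda\beta\cdot \frac{1}{\sqrt{2\beta}}\,e^{-1/2}=\lambda\sqrt{\frac{2\beta}{e}} .
\]

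\emph{Step 3: the Hessian bound \eqref{Gaussian Hessian bound}.} This is the step needing the most care. A direct computation gives
\[
D^2\mathcal K_G(z)=2\lambda\beta\, e^{-\beta|z|^2}\bigl(2\beta\, z\otimes z-I\bigr).
\]
This matrix is symmetric. Its eigenvalues are $-2\lambda\beta e^{-\beta|z|^2}$ on $z^\perp$ and $2\lambda\beta(2s-1)e^{-s}$ in the direction of $z$, where $s:=\beta|z|^2$. The operator norm is therefore $2\lambda\beta\max\{e^{-s},|2s-1|e^{-s}\}$, and it remains to check that $|2s-1|e^{-s}\le1$ for $s\ge0$.
\begin{itemize}
\item On $[0,1/2]$ the function $(1-2s)e^{-s}$ is at most $1$.
\item On $[1/2,\infty)$ the function $g(s)=(2s-1)e^{-s}$ satisfies $g'(s)=(3-2s)e^{-s}$. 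It is therefore maximized at $s=3/2$ with $g(3/2)=2e^{-3/2}<1$.
\end{itemize}
Hence $\|D^2\mathcal K_G(z)\|_{\mathrm{op}}\le2\lambda\beta$ for all $z$. Since the operator norm is convex, $\|D^2W_\rho(x)\|_{\mathrm{op}}\le\int\|D^2\mathcal K_G(x-y)\|_{\mathrm{op}}\,d\rho(y)\le 2\lambda\beta$.

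\emph{Step 4: the Lipschitz conclusion.} The mean value inequality, $\nabla W_\rho(x)-\nabla W_\rho(x')=\int_0^1 D^2W_\rho(x'+\tau(x-x'))(x-x')\,d\tau$, gives a Lipschitz constant of at most $2\lambda\beta$ for $\nabla W_\rho$. None of the constants in Steps 2–4 depend on $\rho$, which gives the claimed uniformity over $\mathcal P(\mathbb R^d)$.
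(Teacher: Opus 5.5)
Your proposal is correct and takes essentially the same route as the paper. Both arguments bound the derivatives of $\mathcal K_G$ pointwise and then integrate against the probability measure $\rho$: the gradient bound comes from maximizing $re^{-\beta r^2}$, and the Hessian bound from the radial/orthogonal eigenvalue decomposition of $D^2\mathcal K_G$ together with $|2s-1|e^{-s}\le 1$. Your version also makes explicit three steps the paper leaves implicit: the dominated-convergence justification for differentiating under the integral, the calculus check that $(2s-1)e^{-s}$ peaks at $2e^{-3/2}$ when $s=3/2$, and the mean-value argument giving the Lipschitz constant.
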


\begin{proof} 
Every derivative of the Gaussian is a polynomial multiplied by a
Gaussian and is therefore bounded. In particular,
\[
\nabla\mathcal K_G(z)=-2\lambda\beta z e^{-\beta|z|^2}.
\]
Since
\[
\sup_{r\geq0}re^{-\beta r^2}=\frac{1}{\sqrt{2\beta e}},
\]
we obtain
\[
|\nabla\mathcal K_G(z)|
\leq\lambda\sqrt{\frac{2\beta}{e}},
\]
and integration against $\rho$ gives the gradient bound. Furthermore, upon taking a second derivative, we obtain
\[
D^2\mathcal K_G(z)
=
\lambda e^{-\beta|z|^2}
\left(4\beta^2z\otimes z-2\beta I\right).
\]
Since $e^{-\beta|z|^2}$ and $|z|^2e^{-\beta|z|^2}$ are bounded,
$D^2\mathcal K_G$ is bounded, and integration against $\rho$ gives
the asserted Hessian bound. Indeed, since
\[
D^2\mathcal{K}_G(z)
=
\lambda e^{-\beta|z|^2}
\left(
4\beta^2\, z\otimes z
-
2\beta I
\right),
\]
the eigenvalue in the radial direction is
\[
2\lambda\beta e^{-\beta|z|^2}
\left(
2\beta|z|^2-1
\right),
\]
whereas the eigenvalues in the directions orthogonal to \(z\) are
\[
-2\lambda\beta e^{-\beta|z|^2}.
\]

Setting \(s=\beta|z|^2\), one has
\[
e^{-s}|2s-1|\le 1,
\qquad s\ge 0.
\]

It follows that
\begin{align*}
\|D^2\mathcal{K}_G(z)\|_{\mathrm{op}}&:=\sup_{|z|=1}|D^{2}\mathcal{K}_{G}(z)|\\
& \leq 2\lambda\beta\max\{1,\left|2\beta|z|^{2}-1\right|\}\\
&\le
2\lambda\beta
\qquad
\text{for every } z\in\mathbb{R}^d.
\end{align*}

Since
\[
D^2\mathcal{K}_G(0)
=
-2\lambda\beta I,
\]
the estimate is sharp, and therefore
\[
\|D^2\mathcal{K}_G\|_{L^\infty(\mathbb{R}^d;\mathrm{op})}
=
2\lambda\beta.
\]
\end{proof}
As a consequence, we establish that $W_{\pi_{0}}$ as in \eqref{self consistent Gaussian potential}is a viscosity subsolution of \eqref{thm:main-eikonal}.
\begin{corollary}
\label{viscosity solution}
Suppose, in addition, that
\[
0<\lambda<\sqrt{\frac{e}{2\beta}}.
\]
Then, for every $t\in[0,1]$,
\[
W_{\pi_0}(t,\cdot)\in C^\infty(\mathbb R^d)
\]
and
\begin{align}\label{Gaussian gradient uniform estimate}
\|\nabla_xW_{\pi_0}(t,\cdot)\|_{L^\infty(\mathbb R^d)}
\leq
\lambda\sqrt{\frac{2\beta}{e}}
<1.
\end{align}
Consequently, $W_{\pi_0}(t,\cdot)$ is a classical, and hence
viscosity, subsolution of \eqref{eqn: eikonal}
In particular, since $\mu_0=\mu$,
\[
W_{\pi_0}(0,x)
=
\lambda\int_{\mathbb R^d}
e^{-\beta|x-y|^2}\,d\mu(y)
=
W_\mu(x),
\]
so the original interaction potential $W_\mu$ is a viscosity
subsolution of \eqref{eqn: eikonal}.
\end{corollary}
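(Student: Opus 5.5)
The plan is to apply Lemma~\ref{lem:gaussian-force-regularity} with $\rho=\mu_t$ for each fixed $t\in[0,1]$, and then use the elementary fact that a $C^1$ classical subsolution is a viscosity subsolution. First I would check that $\mu_t=(e_t)_{\#}\pi_0$ is a Borel probability measure on $\mathbb R^d$. This holds because $e_t:\Omega\to\mathbb R^d$ is continuous and $\pi_0$ is a probability measure. Consequently $W_{\pi_0}(t,\cdot)=W_{\mu_t}$ in the notation of the lemma, and the lemma gives $W_{\pi_0}(t,\cdot)\in C^\infty(\mathbb R^d)$. Smoothness itself comes from differentiating under the integral sign. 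This is justified because every derivative of $\mathcal K_G$ is bounded and $\mu_t$ has total mass one, so dominated convergence applies uniformly in $t$. The same lemma gives the bound \eqref{Gaussian gradient bound}, namely $\|\nabla_xW_{\pi_0}(t,\cdot)\|_{L^\infty}\le\lambda\sqrt{2\beta/e}$, independently of $t$ and of $\pi_0$.

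The second step is the arithmetic check of the hypothesis. The condition $0<\lambda<\sqrt{e/(2\beta)}$ is equivalent to $\lambda\sqrt{2\beta/e}<1$, and this yields \eqref{Gaussian gradient uniform estimate}. I would keep track of the sharp constant $\sup_{r\ge0}re^{-\beta r^2}=(2\beta e)^{-1/2}$ from the proof of the lemma, since it is exactly what makes the threshold on $\lambda$ match.

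The third step is the viscosity property. Since $u=W_{\pi_0}(t,\cdot)$ is $C^1$ and satisfies $|\nabla u(x)|-1\le \lambda\sqrt{2\beta/e}-1<0$ pointwise, it is a classical subsolution of $H(p)=|p|-1=0$. For the viscosity statement, let $\psi\in C^1$ be a test function such that $u-\psi$ has a local maximum at $x_0$. Then first-order optimality gives $\nabla\psi(x_0)=\nabla u(x_0)$, and hence $|\nabla\psi(x_0)|\le 1$, which is the subsolution inequality in the sense of \cite{crandall1983viscosity}. The final assertion is the special case $t=0$. Here $\mu_0=(e_0)_{\#}\pi_0=\mu$ because $\pi_0\in\Pi_{\mathrm{path}}(\mu,\nu)$, so $W_{\pi_0}(0,\cdot)=W_\mu$.

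There is no serious obstacle in this argument. The only points needing care are the convention for the eikonal equation (as $|\nabla u|\le 1$ for a subsolution, rather than a sign-flipped Hamiltonian) and the observation that the statement concerns $W_{\pi_0}$ and not the Evans potential $\varphi$ of Theorem~\ref{thm:conservative initial momentum}. Consequently the argument uses none of the optimality structure of $\pi_0$ beyond the fact that each $\mu_t$ is a probability measure.
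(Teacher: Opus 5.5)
Your proposal is correct and follows essentially the same route as the paper. You apply Lemma~\ref{lem:gaussian-force-regularity} with $\rho=\mu_t$, observe that $0<\lambda<\sqrt{e/(2\beta)}$ is equivalent to $\lambda\sqrt{2\beta/e}<1$, conclude that the smooth function $W_{\pi_0}(t,\cdot)$ is a classical and hence viscosity subsolution, and specialize to $t=0$ via $\mu_0=\mu$. Your explicit test-function check, that $\nabla\psi(x_0)=\nabla u(x_0)$ at a local maximum of $u-\psi$, fills in the step the paper states in one line.
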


\begin{proof}
By the previous lemma, we have
\[
\|\nabla_xW_{\pi_0}(t,\cdot)\|_{L^\infty(\mathbb R^d)}
\leq
\lambda\sqrt{\frac{2\beta}{e}}.
\]
Under the assumption $0<\lambda<\sqrt{\frac{e}{2\beta}},$
it follows that

$$
|\nabla_xW_{\pi_0}(t,x)|<1
\qquad
\text{for every }x\in\mathbb R^d.
$$

Thus
\[
|\nabla_xW_{\pi_0}(t,x)|-1<0,
\]
so $W_{\pi_0}(t,\cdot)$ is a classical subsolution of
\eqref{eqn: eikonal}. Since
$W_{\pi_0}(t,\cdot)\in C^\infty(\mathbb R^d)$, it is, in particular,
a viscosity subsolution.

Finally, since $\mu_0=\mu$,

$$
W_{\pi_0}(0,x)
=
\lambda\int_{\mathbb R^d}
e^{-\beta|x-y|^2}\,d\mu(y)
=
W_\mu(x),
$$

and hence $W_\mu$ is also a viscosity subsolution of
\eqref{eqn: eikonal}.
\end{proof}

For the self-consistent Gaussian interaction potential
$W_{\pi_0}$ defined in
\eqref{self consistent Gaussian potential},
Lemma~\ref{lem:gaussian-force-regularity}, applied with
\[
\rho=\mu_t=(e_t)_\#\pi_0,
\]
gives, for every $t\in[0,1]$,
\[
W_{\pi_0}(t,\cdot)\in C^\infty(\mathbb R^d),
\]
and
\begin{align}
\label{Gaussian force formula}
\nabla_xW_{\pi_0}(t,x)
=
-2\lambda\beta
\int_{\mathbb R^d}
(x-y)e^{-\beta|x-y|^2}\,d\mu_t(y).
\end{align}

Moreover,
\begin{align}
\label{Gaussian gradient bound}
\|\nabla_xW_{\pi_0}(t,\cdot)\|_{L^\infty(\mathbb R^d)}
\leq
\lambda\sqrt{\frac{2\beta}{e}},
\end{align}
and
\begin{align}
\|D_x^2W_{\pi_0}(t,\cdot)\|_{L^\infty(\mathbb R^d;\mathrm{op})}
\leq
2\lambda\beta.
\end{align}

In particular,
$\nabla_xW_{\pi_0}(t,\cdot)$ is globally Lipschitz, uniformly in
$t\in[0,1]$.

\subsection{Gaussian optimality and the Euler--Lagrange equation}
\label{subsec:gaussian-EL}

The next result derives the characteristic equation from optimality of
the path measure itself. We therefore do not assume a priori that an
individual trajectory may be varied while the interaction field is
held fixed. That the path-space optimality implies the Gaussian Euler--Lagrange equation is due to the the following theorem. In addition, the next result establishes the required regularity of $\gamma$ in the path-space, $\Omega$; which is viewed as $W^{1,2}\left([0, 1]; \mathbb{R}^{d}\right)$.

This result establishes the connection between the path-space optimal transport with congestion and PDE theory. Furthermore, the ideas behind this result originated in the author's dissertation, \cite{Cabrera2022}. Indeed, for any continuously differentiable $u$, formally, minimizers of \eqref{eqn: OT paths} satisfy the nonlocal Euler--Lagrange equation
\begin{align}\label{Geodesic curves}
-\partial_{tt}\Gamma(x,t)
+
\int
u'\bigl(|\Gamma(x,t)-\Gamma(y,t)|\bigr)
\frac{\Gamma(x,t)-\Gamma(y,t)}
{|\Gamma(x,t)-\Gamma(y,t)|}
\,d\mu_0(y)
=
0.
\end{align}
Here $\Gamma: X\to \Omega$ is equivalent to $x\mapsto \gamma_{x}$ \eqref{deterministic plan}-\eqref{conditions of paths}

Thus, the path-space optimal transport problem naturally gives rise to a nonlocal dynamical equation governing the evolution of optimal trajectories. This observation provides the starting point for the kinetic formulation developed in the present work. Equation \eqref{Geodesic curves} provides the formal dynamical motivation for the present work. In the Gaussian case, $u(r)=\lambda e^{-\beta r^2},$
the corresponding nonlocal interaction force takes the form
\[
-2\lambda\beta
\int
\bigl(\Gamma(x,t)-\Gamma(y,t)\bigr)
e^{-\beta|\Gamma(x,t)-\Gamma(y,t)|^2}
\,d\mu_0(y).
\]
In particular, since
\[
W_{\pi_0}(t,x)
=
\lambda\int_{\mathbb R^d}
e^{-\beta|x-y|^2}\,d\mu_t(y).
\]

The regularity estimates obtained above allow us to pass from the optimality of the path-space measure to the dynamics of the individual trajectories on which it is concentrated. More precisely, we now show that $\pi_0$-almost every optimal path satisfies the Euler--Lagrange equation associated with the effective Gaussian force. Thus, the variational optimality of $\pi_0$ yields a second-order equation of motion along almost every trajectory. The boundedness of the Gaussian interaction force then gives additional regularity of these paths, which will allow us to introduce their velocities and subsequently pass to the phase-space formulation.

\begin{theorem}
\label{thm:gaussian-path-euler-lagrange}
Assume that $\pi_0$ minimizes \eqref{eq:path-energy-normalized}. Suppose
\[
V\in C^1([0,1]\times\mathbb R^d),
\qquad
\|\nabla_xV\|_{L^\infty((0,1)\times\mathbb R^d)}<\infty.
\]
Then, for $\pi_{0}$-almost every $\gamma\in\Omega$,
\begin{equation}
\label{eq:gaussian-EL}
\ddot\gamma(t)
=
K_{\pi_0}(t,\gamma(t))
=
-\nabla_xV(t,\gamma(t))
+
\nabla_xW_{\pi_0}(t,\gamma(t))
\end{equation}
in $\mathcal D'((0,1);\mathbb R^d)$. In particular,
\[
\gamma\in W^{2,\infty}(0,1;\mathbb R^d)\subset C^1([0,1];\mathbb R^d)
\]
for $\pi_0$-almost every $\gamma$.
\end{theorem}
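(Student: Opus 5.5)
The plan is to derive \eqref{eq:gaussian-EL} from \emph{endpoint-preserving} perturbations of the whole measure $\pi_0$, not from the linearized cost $c_{\pi_0}$. This avoids the measurable-selection hypothesis of Theorem~\ref{thm:endpoint reduction}.

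\textbf{Step 1: the perturbed competitor.} Fix a test function $h\in C_c^\infty((0,1);\mathbb R^d)$ and a Borel set $A\subset\Omega$. Define
\[
S_s(\gamma):=\gamma+s\,h\,\mathbf 1_A(\gamma),\qquad \pi_s:=(S_s)_\#\pi_0 .
\]
Since $h(0)=h(1)=0$, we have $e_i\circ S_s=e_i$ for $i=0,1$. Exactly as in the proof of Theorem~\ref{thm:endpoint reduction}, this gives $\pi_s\in\Pi_{\mathrm{path}}(\mu,\nu)$, and hence $\mathcal J(\pi_s)\ge\mathcal J(\pi_0)$ for all $s\in\mathbb R$.

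\textbf{Step 2: the first variation.} Because the inequality holds for $s$ of both signs, $\frac{d}{ds}\mathcal J(\pi_s)|_{s=0}=0$, provided this derivative exists. I will compute it term by term.

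\begin{itemize}
\item The kinetic part is exactly quadratic in $s$. Its derivative is $\int_A\int_0^1\dot\gamma\cdot\dot h\,dt\,d\pi_0$. This is finite because $\pi_0$ is concentrated on $W^{1,2}$ by \eqref{eq:pi0-finite-energy-concentration}, with $\int\!\int|\dot\gamma|^2<\infty$. Here I assume, as is implicit in $\mathcal J(\pi_0)<\infty$, that the $V$-term is integrable.
\item For the potential part, the mean value theorem and $\|\nabla_xV\|_\infty<\infty$ give difference quotients bounded by $\|\nabla_x V\|_\infty\|h\|_\infty$. Dominated convergence then yields the derivative $-\int_A\int_0^1\nabla_xV(t,\gamma)\cdot h\,dt\,d\pi_0$.
\item For the interaction part, I write $\iint\mathcal K_G(S_s\gamma(t)-S_s\sigma(t))$ and differentiate in $s$. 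The integrand is smooth, and $\nabla\mathcal K_G$ is bounded by Lemma~\ref{lem:gaussian-force-regularity}, so dominated convergence applies. The evenness of $\mathcal K_G$ makes the two contributions (perturbing $\gamma$, perturbing $\sigma$) equal, which cancels the factor $\frac12$. Using $\mu_t=(e_t)_\#\pi_0$ and \eqref{eq:grad-W-gaussian}, the derivative is $\int_A\int_0^1\nabla_xW_{\pi_0}(t,\gamma(t))\cdot h\,dt\,d\pi_0$.
\end{itemize}

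Collecting the three terms gives
\[
\int_A G_h(\gamma)\,d\pi_0(\gamma)=0,\qquad
G_h(\gamma):=\int_0^1\Bigl[\dot\gamma\cdot\dot h-\bigl(-\nabla_xV+\nabla_xW_{\pi_0}\bigr)(t,\gamma)\cdot h\Bigr]dt .
\]

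\textbf{Step 3: from integrated identities to almost every path.} Take $A=\{G_h>0\}$ and then $A=\{G_h<0\}$. This gives $G_h=0$ for $\pi_0$-a.e.\ $\gamma$, but the exceptional null set depends on $h$. To remove this dependence, I let $h$ range over a countable subset $\mathcal H\subset C_c^\infty((0,1);\mathbb R^d)$ that is dense in $C^1$. The union of the exceptional sets is still $\pi_0$-null. For a fixed $\gamma\in W^{1,2}$, the map $h\mapsto G_h(\gamma)$ is continuous in the $C^1$ norm, so $G_h(\gamma)=0$ for every $h\in C_c^\infty$. This is exactly the statement $\ddot\gamma=K_{\pi_0}(t,\gamma)$ in $\mathcal D'((0,1);\mathbb R^d)$, with $K_{\pi_0}$ as in \eqref{eq:exact-force-definition}. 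The sign is consistent with the Lagrangian $\frac12|v|^2-V+W_{\pi_0}$.

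\textbf{Step 4: regularity.} By Lemma~\ref{lem:gaussian-force-regularity} and the hypothesis on $V$,
\[
|K_{\pi_0}|\le\|\nabla_xV\|_\infty+\lambda\sqrt{2\beta/e},
\]
so $\ddot\gamma\in L^\infty$. Together with $\gamma\in W^{1,2}$, this gives $\gamma\in W^{2,\infty}(0,1)\subset C^1([0,1])$.

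The main obstacle is Step 2. One must justify differentiation under the integral, especially for the quadratic interaction term with both arguments perturbed on $A\times A$, and confirm that all three terms are $\pi_0$-integrable. The countable-density reduction in Step 3 is routine but essential, since each choice of $h$ produces its own null set.
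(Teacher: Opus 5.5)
Your argument is essentially the paper's proof: the paper perturbs by $\gamma\mapsto\gamma+\varepsilon a(\gamma)h$ with $a$ an arbitrary bounded Borel function, while you take $a=\mathbf 1_A$; it computes the same three first-variation terms, uses evenness of $\mathcal K_G$ to cancel the $\tfrac12$, localizes with a countable dense family of test functions, and gets $W^{2,\infty}$ from the $L^\infty$ bound on $K_{\pi_0}$. One sign slip needs fixing: your three computed terms sum to $G_h(\gamma)=\int_0^1\bigl[\dot\gamma\cdot\dot h+K_{\pi_0}(t,\gamma)\cdot h\bigr]\,dt$, not $\dot\gamma\cdot\dot h-K_{\pi_0}\cdot h$, since with the minus sign as displayed $G_h=0$ would give $\ddot\gamma=-K_{\pi_0}$, whereas the correct sign gives $-\ddot\gamma+K_{\pi_0}=0$ in $\mathcal D'$, as you claim.
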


\begin{proof}
Fix $h\in C_c^\infty((0,1);\mathbb R^d)$ and let
$a:\Omega\to\mathbb R$ be an arbitrary bounded Borel function. Define
\[
T_\varepsilon(\gamma)(t)
=
\gamma(t)+\varepsilon a(\gamma)h(t),
\qquad
\pi_\varepsilon:=(T_\varepsilon)_{\#}\pi_0.
\]
Since $h(0)=h(1)=0$, both endpoint marginals are preserved, so
$\pi_\varepsilon\in\Pi_{\mathrm{path}}(\mu,\nu)$. Minimality gives
\[
\left.\frac{d}{d\varepsilon}
\mathcal J(\pi_\varepsilon)\right|_{\varepsilon=0}=0.
\]

The kinetic variation is
\[
\int_\Omega a(\gamma)\int_0^1
\dot\gamma(t)\cdot\dot h(t)\,dt\,d\pi_0(\gamma),
\]
and the external-potential variation is
\[
-\int_\Omega a(\gamma)\int_0^1
\nabla_xV(t,\gamma(t))\cdot h(t)\,dt\,d\pi_0(\gamma).
\]
These differentiations follow respectively from finite kinetic energy
and from boundedness of $\nabla_xV$.

For the interaction term, differentiation under the integral is
justified by the global boundedness of $\nabla\mathcal K_G$. Its
derivative at $\varepsilon=0$ equals
\begin{align*}
\frac12\iint_{\Omega\times\Omega}\int_0^1
\nabla\mathcal K_G(\gamma(t)-\sigma(t))
\cdot[a(\gamma)-a(\sigma)]h(t)
\,dt\,d\pi_0(\gamma)d\pi_0(\sigma).
\end{align*}
Because $\mathcal K_G$ is even, $\nabla\mathcal K_G$ is odd. At any rate,
interchanging $\gamma$ and $\sigma$ in the term containing
$a(\sigma)$ shows that the two contributions coincide. Hence the
factor $1/2$ cancels, and the interaction variation is
\[
\int_\Omega a(\gamma)\int_0^1
\nabla_xW_{\pi_0}(t,\gamma(t))\cdot h(t)\,dt\,d\pi_0(\gamma).
\]
Therefore putting these derivatives together yields
\begin{align}
\int_\Omega a(\gamma)
\left[
\int_0^1\dot\gamma(t)\cdot\dot h(t)\,dt
+
\int_0^1K_{\pi_0}(t,\gamma(t))\cdot h(t)\,dt
\right]d\pi_0(\gamma)=0.
\label{eq:pathwise-localization-pre}
\end{align}
Since $a$ is an arbitrary bounded Borel function, the bracket vanishes
for $\pi_0$-almost every $\gamma$. Thus
\begin{equation}
\label{eq:pathwise-weak-EL}
\int_0^1\dot\gamma(t)\cdot\dot h(t)\,dt
+
\int_0^1K_{\pi_0}(t,\gamma(t))\cdot h(t)\,dt=0.
\end{equation}
Note that $K_{\pi_{0}}$ is given in \eqref{eq:exact-force-definition}.
Using a countable dense family of test functions gives a single
full-measure set on which this identity holds for every
$h\in C_c^\infty((0,1);\mathbb R^d)$. By the definition of the
distributional second derivative,
\[
\ddot\gamma(t)=K_{\pi_0}(t,\gamma(t)).
\]
Finally, Lemma~\ref{lem:gaussian-force-regularity} and boundedness of
$\nabla_xV$ imply $K_{\pi_0}\in L^\infty$. Hence
$\ddot\gamma\in L^\infty$, proving
$\gamma\in W^{2,\infty}\subset C^1$.
\end{proof}

\begin{remark}
\label{rem:force-sign}
The sign is determined by
\[
L(t,x,v)=\frac12|v|^2-V(t,x)+W_{\pi_0}(t,x),
\]
and therefore
\[
\ddot\gamma=-\nabla_xV+\nabla_xW_{\pi_0}.
\]
\end{remark}

\subsection{Phase-Space Marginals}
According to Theorem~\ref{thm:gaussian-path-euler-lagrange}, the optimal path measure $\pi_0$ is concentrated on
\[
\Omega_1 := C^1\bigl([0,1];\mathbb{R}^d\bigr).
\]
Indeed, for $\pi_0$-almost every $\gamma$, the Euler--Lagrange equation
\[
\ddot{\gamma}(t)
=
K_{\pi_0}\bigl(t,\gamma(t)\bigr)
\]
and the boundedness of $K_{\pi_0}$ imply that
\[
\gamma
\in
W^{2,\infty}\bigl(0,1;\mathbb{R}^d\bigr)
\subset
C^1\bigl([0,1];\mathbb{R}^d\bigr).
\]

Then for every $t\in[0,1]$, define the couple
\begin{equation}
\label{eq:position-velocity-evaluation}
E_t:\Omega^1\longrightarrow\mathbb R^{2d},
\qquad
E_t(\gamma):=(\gamma(t),\dot\gamma(t)).
\end{equation}
Since $E_t$ is continuous with respect to the $C^1$ topology, it is Borel measurable
Since $E_t$ is continuous with respect to the $C^1$ topology, it is
Borel measurable
The next definition supplies the path-space marginal criterion which $\pi_{0}$ helps to generalize.
\begin{definition}
\label{def:phase-space-marginal}
The phase-space measure generated by the optimal path plan $\pi_0$ is
\begin{equation}
\label{eq:phi-phase}
\phi_t:=(E_t)_{\#}\pi_0
=
(\gamma\mapsto(\gamma(t),\dot\gamma(t)))_{\#}\pi_0.
\end{equation}
Its spatial marginal is
\begin{equation}
\label{eq:rho-from-phi}
\mu_t:=(p_x)_{\#}\phi_t=(e_t)_{\#}\pi_0.
\end{equation}
Here  $p_{x}$ is the projection in the first coordinate. Indeed, $p_{x}:\mathbb{R}^{2d}\to \mathbb{R}^{d}$ defined by $p_{x}(x,y)=x$. Now, the composition naturally gives the evaluation map on paths, $p_{x}\circ E_{t}=e_{t}$.Therefore,
\begin{align*}
    (p_{x})_{\#}\phi_{t}&=(p_{x})_{\#}(E_{t})_{\#}\pi_{0}=(p_{x}\circ E_{t})_{\#}\pi_{0}=(e_{t})_{\#}\pi_{0}.
\end{align*}
\end{definition}
In summation, the definition is well-defined.
Lastly, note that no absolute continuity of $\phi_t$ with respect to
$\mathcal L^{2d}$ (Lebesgue measure) is assumed.

\subsection{The Gaussian measure-valued kinetic equation}
Recall that $AC(A,B)$ stands for a class of absolutely continuous functions from $A$ to $B$. The next result gives rise to a Gaussian optimal transport which induces a measure-valued kinetic equation. This is the desired connection from optimal transport in the path-space with congestion to PDEs.

\begin{theorem}
\label{thm:path-implies-liouville}
Assume that $\pi_0$ satisfies \eqref{eqn: OT paths} and that it is concentrated on $W^{2,1}$ trajectories
satisfying
\[
\ddot\gamma(t)
=
K_{\pi_0}(t,\gamma(t))
\]
for almost every $t$, and suppose
\begin{equation}
\label{eq:kinetic-integrability}
\int_{\Omega^1}\int_0^1
\left(
|\dot\gamma(t)|
+
|K_{\pi_0}(t,\gamma(t))|
\right)
dt\,d\pi_0(\gamma)
<\infty.
\end{equation}
Then the weakly continuous family
\[
\phi_t=(E_t)_{\#}\pi_0
\]
satisfies
\begin{equation}
\label{eq:measure-liouville}
\partial_t\phi_t
+
\operatorname{div}_x(v\phi_t)
+
\operatorname{div}_v
\bigl(K_{\pi_0}(t,x)\phi_t\bigr)
=
0
\end{equation}
in $\mathcal D'((0,1)\times\mathbb R^{2d})$.

Equivalently, for every test function
$\psi\in C_c^1((0,1)\times\mathbb R^{2d})$,
\begin{equation}
\label{eq:weak-measure-liouville}
\int_0^1
\int_{\mathbb R^{2d}}
\left[
\partial_t\psi(t,x,v)
+
v\cdot\nabla_x\psi(t,x,v)
+
K_{\pi_0}(t,x)\cdot\nabla_v\psi(t,x,v)
\right]
d\phi_t(x,v)\,dt
=
0,
\end{equation}
which determines weak solutions of \eqref{eq:measure-liouville}.
\end{theorem}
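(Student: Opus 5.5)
The plan is to test the measure $\phi_t=(E_t)_{\#}\pi_0$ against a fixed $\psi\in C_c^1((0,1)\times\mathbb R^{2d})$ and reduce \eqref{eq:weak-measure-liouville} to the fundamental theorem of calculus along each trajectory, then integrate over $\pi_0$ using Fubini. By the change-of-variables formula for push-forwards,
\[
\int_0^1\int_{\mathbb R^{2d}} G(t,x,v)\,d\phi_t(x,v)\,dt
=
\int_{\Omega^1}\int_0^1 G\bigl(t,\gamma(t),\dot\gamma(t)\bigr)\,dt\,d\pi_0(\gamma)
\]
for every bounded Borel $G$, or every $G$ dominated by the integrand in \eqref{eq:kinetic-integrability}. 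I would apply this with $G=\partial_t\psi+v\cdot\nabla_x\psi+K_{\pi_0}\cdot\nabla_v\psi$. Joint measurability of $(t,\gamma)\mapsto(\gamma(t),\dot\gamma(t))$ on $[0,1]\times\Omega^1$ holds because it is continuous in the $C^1$ topology. Boundedness of $\psi$ and its derivatives, together with \eqref{eq:kinetic-integrability}, then justifies Fubini--Tonelli.

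Next, fix a $\gamma$ in the full-measure set where $\gamma\in W^{2,1}$ and $\ddot\gamma=K_{\pi_0}(t,\gamma)$ a.e. Then $t\mapsto(\gamma(t),\dot\gamma(t))$ is absolutely continuous, since $\dot\gamma\in W^{1,1}$. Because $\psi\in C^1$, the composition $g(t):=\psi(t,\gamma(t),\dot\gamma(t))$ is absolutely continuous, and the chain rule gives, for a.e. $t$,
\[
g'(t)=\partial_t\psi+\dot\gamma\cdot\nabla_x\psi+\ddot\gamma\cdot\nabla_v\psi
=\bigl[\partial_t\psi+v\cdot\nabla_x\psi+K_{\pi_0}(t,x)\cdot\nabla_v\psi\bigr]_{(x,v)=(\gamma(t),\dot\gamma(t))}.
\]
Since $\psi$ has compact support in $(0,1)$ in time, $g(0)=g(1)=0$, so $\int_0^1 g'(t)\,dt=0$ for every such $\gamma$. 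Integrating against $\pi_0$ and using the identity above yields \eqref{eq:weak-measure-liouville}, which is by definition \eqref{eq:measure-liouville} in $\mathcal D'$.

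Weak continuity of $t\mapsto\phi_t$ follows from dominated convergence. For $f\in C_b(\mathbb R^{2d})$, the map $t\mapsto f(\gamma(t),\dot\gamma(t))$ is continuous for each $C^1$ path, and it is bounded by $\|f\|_\infty$.

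The main obstacle I expect is the chain rule for $\psi\circ(t,\gamma,\dot\gamma)$ when $\dot\gamma$ is only absolutely continuous. I would handle it using $\psi\in C^1$ with compact support, so that $\psi$ is Lipschitz and the standard chain rule for $C^1$ functions composed with $AC$ curves applies. In the Gaussian setting, Theorem~\ref{thm:gaussian-path-euler-lagrange} gives $W^{2,\infty}$, which makes this step and the integrability hypothesis immediate.
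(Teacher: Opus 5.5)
Your proposal is correct and follows the same route as the paper. It applies the chain rule to $t\mapsto\psi(t,\gamma(t),\dot\gamma(t))$ along each $W^{2,1}$ trajectory, uses the compact time support of $\psi$ to kill the boundary terms, and integrates against $\pi_0$ via Fubini and the push-forward identity for $\phi_t=(E_t)_{\#}\pi_0$. You also supply two checks the paper takes for granted: joint measurability of $(t,\gamma)\mapsto E_t(\gamma)$, and weak continuity of $t\mapsto\phi_t$ by dominated convergence. Your closing aside overreaches slightly, however. Per-path $W^{2,\infty}$ bounds alone do not give the $|\dot\gamma|$ part of \eqref{eq:kinetic-integrability} without a moment or energy bound on $\pi_0$.
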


\begin{proof}
For $\pi_0$-almost every $\gamma$, set $Z_\gamma(t):=(\gamma(t),\dot\gamma(t)).$
Then $Z_\gamma\in AC([0,1];\mathbb R^{2d})$
and
\[
\dot Z_\gamma(t)
=
\bigl(
\dot\gamma(t),
K_{\pi_0}(t,\gamma(t))
\bigr)
\]
for almost every $t$.  Therefore an application of the chain-rule yields,
\begin{align*}
\frac{d}{dt}
\psi(t,Z_\gamma(t))=
\partial_t\psi(t,Z_\gamma(t))+
\dot\gamma(t)\cdot
\nabla_x\psi(t,Z_\gamma(t))+K_{\pi_0}(t,\gamma(t))
\cdot
\nabla_v\psi(t,Z_\gamma(t))
\end{align*}
for almost every $t$.
Integrating the preceding identity first with respect to \(t\) and then with respect to \(\pi_0\), and using the integrability assumption \eqref{eq:kinetic-integrability}, we obtain \[ \begin{aligned} 0 = \int_0^1\int_{\Omega_1} \Big[ &\partial_t\psi\bigl(t,\gamma(t),\dot{\gamma}(t)\bigr) + \dot{\gamma}(t)\cdot \nabla_x\psi\bigl(t,\gamma(t),\dot{\gamma}(t)\bigr) \\ &+ K_{\pi_0}\bigl(t,\gamma(t)\bigr)\cdot \nabla_v\psi\bigl(t,\gamma(t),\dot{\gamma}(t)\bigr) \Big] \,d\pi_0(\gamma)\,dt. \end{aligned} \] The boundary terms vanish since \(\psi\) is compactly supported in \((0,1)\). For each fixed \(t\), the definition \[ \phi_t=(E_t)_{\#}\pi_0, \qquad E_t(\gamma) = \bigl(\gamma(t),\dot{\gamma}(t)\bigr), \] implies that, for every integrable Borel function \(F:\mathbb{R}^{2d}\to\mathbb{R}\), \[ \int_{\mathbb{R}^{2d}} F(x,v)\,d\phi_t(x,v) = \int_{\Omega_1} F\bigl(\gamma(t),\dot{\gamma}(t)\bigr) \,d\pi_0(\gamma). \] Applying this identity with \[ F_t(x,v) = \partial_t\psi(t,x,v) + v\cdot\nabla_x\psi(t,x,v) + K_{\pi_0}(t,x)\cdot\nabla_v\psi(t,x,v), \] we conclude that \[ \int_0^1 \int_{\mathbb{R}^{2d}} \left[ \partial_t\psi(t,x,v) + v\cdot\nabla_x\psi(t,x,v) + K_{\pi_0}(t,x)\cdot\nabla_v\psi(t,x,v) \right] \,d\phi_t(x,v)\,dt = 0, \] which is precisely \eqref{eq:weak-measure-liouville}.
\end{proof}

\subsection{Propagation of phase-space densities and regularity}
The preceding result is naturally formulated at the level of measures and does
not require the phase-space measure \eqref{eq:phase-marginal-from-path-measure} to be absolutely continuous with respect to Lebesgue measure on
\(\mathbb R^{2d}\). In fact, such absolute continuity need not follow from the
path-space formulation alone. It is therefore useful to distinguish the
measure-valued kinetic equation from the additional situation in which the
initial phase-space measure possesses a density.

We now show that, for the Gaussian interaction, absolute continuity is
propagated by the characteristic flow. More precisely, if $\phi_0=f_0\,\mathcal L^{2d},$ then $\phi_t=f_t\,\mathcal L^{2d}$ for every $t\in[0,1]$. Moreover, the divergence-free structure of the
phase-space vector field implies preservation of the $L^p$-norms of the
density.

\begin{theorem}
\label{thm:gaussian-density-propagation}
Let $\mathcal K_G(z)=\lambda e^{-\beta |z|^2},
\quad \lambda,\beta>0,$
and let
\begin{align*}
    K_{\pi_0}(t,x)
=
-\nabla_xV(t,x)
+
\nabla_xW_{\pi_0}(t,x),
\end{align*}
where $W_{\pi_0}(t,x)
=
\int_{\mathbb R^d}
\mathcal K_G(x-y)\,d\mu_t(y).$ Assume that $\nabla_xV\in L^\infty([0,1]\times\mathbb R^d)$ and that there exists \(L_V>0\) such that
\[
|\nabla_xV(t,x)-\nabla_xV(t,y)|
\le L_V|x-y|
\]
for every $x,y\in\mathbb R^d$ and almost every $t\in [0, 1]$. Suppose that the phase-space measures $\phi_{t}$, given by \eqref{eq:phase-marginal-from-path-measure}
satisfy
\[
\partial_t\phi_t
+
\operatorname{div}_x(v\phi_t)
+
\operatorname{div}_v
\bigl(K_{\pi_0}(t,x)\phi_t\bigr)
=0
\]
in the sense of distributions.

If the initial phase-space measure is absolutely continuous,
\[
\phi_0=f_0\,\mathcal L^{2d},
\qquad
f_0\in L^1(\mathbb R^{2d}),
\]
then for every \(t\in[0,1]\), $\phi_t=f_t\,\mathcal L^{2d}$ for some $f_t\in L^1(\mathbb R^{2d})$. Moreover, \(f=f(x,v,t)\) is the unique distributional solution of
\[
\partial_t f
+
v\cdot\nabla_x f
+
K_{\pi_0}(t,x)\cdot\nabla_v f
=0,
\]
with initial data $f(\cdot,\cdot,0)=f_0.$

More precisely, if

\begin{align*}
    \Psi_t(x,v)
=
\bigl(X_t(x,v),V_t(x,v)\bigr)
\end{align*}
denotes the characteristic flow satisfying $\dot X_t=V_t,
\quad
\dot V_t=K_{\pi_0}(t,X_t),$ then $\phi_t=(\Psi_t)_{\#}\phi_0$ and $f_t=f_0\circ\Psi_t^{-1}
\qquad\text{for a.e. }(x,v)\in\mathbb R^{2d}.$
In particular,
\[
\|f_t\|_{L^p(\mathbb R^{2d})}
=
\|f_0\|_{L^p(\mathbb R^{2d})},
\qquad 1\le p\le\infty,\quad\text{whenever}\quad f_0\in L^p(\mathbb R^{2d}).
\]

\end{theorem}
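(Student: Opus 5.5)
The plan is to treat the Liouville equation as a linear transport equation driven by a fixed, frozen vector field. Once $\pi_0$ is fixed, the family $\mu_t=(e_t)_\#\pi_0$ is given, so $K_{\pi_0}$ does not depend on the unknown. The phase-space field is
\[
b(t,x,v):=\bigl(v,K_{\pi_0}(t,x)\bigr).
\]

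\textbf{Step 1: $b$ is Lipschitz.} By Lemma~\ref{lem:gaussian-force-regularity}, applied with $\rho=\mu_t$, the map $x\mapsto\nabla_xW_{\pi_0}(t,x)$ is globally Lipschitz with constant $2\lambda\beta$, uniformly in $t$. Adding the hypothesis on $\nabla_xV$, the force $x\mapsto K_{\pi_0}(t,x)$ is bounded and Lipschitz with constant $L:=L_V+2\lambda\beta$. Hence $b$ is globally Lipschitz in $(x,v)$ with constant $1+L$, uniformly in $t$. In $t$ it is only measurable, which is handled by Carathéodory's theorem. It also has linear growth, $|b|\le|v|+C$.

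\textbf{Step 2: the flow preserves Lebesgue measure.} Carathéodory's theorem together with Grönwall gives a unique global flow $\Psi_t$. Each $\Psi_t$ is a bi-Lipschitz homeomorphism of $\mathbb R^{2d}$. The key structural fact is
\[
\operatorname{div}_{(x,v)}b=\operatorname{div}_x v+\operatorname{div}_v K_{\pi_0}(t,x)=0.
\]
To make this rigorous at Lipschitz regularity, I would use Liouville's formula $\partial_t\det D\Psi_t=(\operatorname{div}b)\circ\Psi_t\,\det D\Psi_t$. Alternatively, and more directly, note that the Jacobian of $b$ has block form $\begin{pmatrix}0&I\\ DK&0\end{pmatrix}$, whose trace vanishes almost everywhere. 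Either route shows $(\Psi_t)_\#\mathcal L^{2d}=\mathcal L^{2d}$. A mollification of $b$ in $x$ followed by stability of flows would also do.

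\textbf{Step 3: uniqueness identifies $\phi_t$.} This is the step I expect to be the main obstacle. The given $\phi_t$ is only a distributional measure-valued solution, so we must show it coincides with $(\Psi_t)_\#\phi_0$. I would invoke uniqueness for continuity equations with Lipschitz fields within the class of narrowly continuous measure solutions satisfying $\int_0^1\!\int|b|\,d\phi_t\,dt<\infty$ (Ambrosio–Gigli–Savaré, Prop.~8.1.7/8.1.8). The required integrability follows from boundedness of $K_{\pi_0}$ and $\int_0^1\!\int|\dot\gamma(t)|\,dt\,d\pi_0<\infty$, which holds because $\pi_0$ has finite kinetic energy. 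Narrow continuity of $\phi_t$ follows from $C^1$ continuity of the trajectories $\gamma\in W^{2,\infty}$ and dominated convergence. A concrete alternative is a duality argument: solve the backward transport equation with a smooth compactly supported terminal datum via the flow, test it against the difference of the two solutions, and localize using the linear growth of $b$ with cutoffs.

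\textbf{Step 4: conclusions.} By Step 3, $\phi_t=(\Psi_t)_\#(f_0\mathcal L^{2d})$. Measure preservation from Step 2 gives $\phi_t=f_t\mathcal L^{2d}$ with $f_t=f_0\circ\Psi_t^{-1}$ almost everywhere. The same change of variables yields $\|f_t\|_{L^p}=\|f_0\|_{L^p}$ for $p<\infty$. For $p=\infty$ it follows because $\Psi_t^{-1}$ maps null sets to null sets. Since $\operatorname{div}b=0$, the continuity equation $\partial_t f+\operatorname{div}(bf)=0$ coincides distributionally with $\partial_tf+v\cdot\nabla_xf+K_{\pi_0}\cdot\nabla_vf=0$. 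Uniqueness of $f$ among $L^1$ distributional solutions is the density version of Step 3, since any such $f$ defines a measure solution with the same integrability.
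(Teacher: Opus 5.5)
Your proposal is correct and follows essentially the same route as the paper. The uniform Hessian bound from Lemma~\ref{lem:gaussian-force-regularity} makes $(v,K_{\pi_0}(t,x))$ globally Lipschitz, the vanishing divergence with Liouville's formula gives a Lebesgue-measure-preserving flow $\Psi_t$, uniqueness of measure solutions gives $\phi_t=(\Psi_t)_{\#}\phi_0$, and a change of variables yields $f_t=f_0\circ\Psi_t^{-1}$ and the $L^p$ identities; the only difference is that you explicitly check the hypotheses for that uniqueness step (narrow continuity of $t\mapsto\phi_t$ and $\int_0^1\!\int|b|\,d\phi_t\,dt<\infty$), which the paper invokes without verification.
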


\begin{proof}
Since

$$
\nabla\mathcal K_G(z)
=
-2\lambda\beta\,z\,e^{-\beta|z|^2},
$$

we have
\[
\nabla_xW_{\pi_0}(t,x)
=
-2\lambda\beta
\int_{\mathbb R^d}
(x-y)e^{-\beta|x-y|^2}\,d\mu_t(y).
\]
Furthermore to this,
\[
D^2\mathcal K_G(z)
=
-2\lambda\beta e^{-\beta|z|^2}I
+
4\lambda\beta^2e^{-\beta|z|^2}z\otimes z.
\]
Since $\sup_{z\in\mathbb R^d}|D^2\mathcal K_G(z)|<\infty$, thanks to Lemma \ref{lem:gaussian-force-regularity} there exists a constant \(C_{\lambda,\beta}<\infty\), depending only on
\(\lambda\) and \(\beta\), such that

$$
\|D^2W_{\pi_0}(t,\cdot)\|_{L^\infty}
\le C_{\lambda,\beta}
$$

for every \(t\in[0,1]\). Therefore

$$
|\nabla W_{\pi_0}(t,x)-\nabla W_{\pi_0}(t,y)|
\le
C_{\lambda,\beta}|x-y|.
$$

Using $K_{\pi_{0}}(t,\cdot)$ as given in \eqref{eq:exact-force-definition} and combining the latter estimate with the Lipschitz assumption on $\nabla V$,  we obtain
\[
|K_{\pi_0}(t,x)-K_{\pi_0}(t,y)|
\le
(L_V+C_{\lambda,\beta})|x-y|.
\]
Thus \(K_{\pi_0}(t,\cdot)\) is globally Lipschitz uniformly in \(t\).

Define the phase-space vector field $B: \mathbb{R}^{2d}\to \mathbb{R}^{2d}$ by
\[
B(t,x,v)
=
\bigl(v,K_{\pi_0}(t,x)\bigr).
\]
Then
\[
|B(t,x,v)-B(t,y,w)|
\le
|v-w|
+
(L_V+C_{\lambda,\beta})|x-y|.
\]
Hence \(B(t,\cdot,\cdot)\) is globally Lipschitz on phase space, uniformly in \(t\). The characteristic system
\[
\dot X_t=V_t,
\qquad
\dot V_t=K_{\pi_0}(t,X_t)
\]
therefore generates a unique flow
\[
\Psi_t:\mathbb R^{2d}\longrightarrow\mathbb R^{2d}.
\]
The phase-space vector field is divergence-free. Indeed,
\[
\operatorname{div}_{(x,v)}B(t,x,v)
=
\operatorname{div}_x v
+
\operatorname{div}_vK_{\pi_0}(t,x).
\]
Since $v$is independent of $x$, $\operatorname{div}_xv=0,$
and since $K_{\pi_0}(t,x)$ is independent of $v$, $\operatorname{div}_vK_{\pi_0}(t,x)=0.$
Consequently,
\[
\operatorname{div}_{(x,v)}B=0.
\]
The Jacobian of the characteristic flow therefore satisfies Liouville's formula \cite{ArnoldODE}
\[
\frac{d}{dt}
\det D\Psi_t(z)
=
(\operatorname{div}B)(t,\Psi_t(z))
\det D\Psi_t(z)
=0.
\]
So since $\det D\Psi_0(z)=1,$ we obtain $\det D\Psi_t(z)=1.$ Thus the phase-space flow preserves Lebesgue measure.

As the continuity equation associated with the globally Lipschitz field \(B\)
has a unique measure solution, one has the appropriate measure $\phi_t=(\Psi_t)_{\#}\phi_0.$
Using $\phi_0=f_0\,\mathcal L^{2d},$ for every bounded Borel function $h$,
\[
\int_{\mathbb R^{2d}}h(z)\,d\phi_t(z)
=
\int_{\mathbb R^{2d}}
h(\Psi_t(z))f_0(z)\,dz.
\] Making the change of variables $y=\Psi_t(z)$ and using $\det D\Psi_t=1,$ we obtain
\[
\int_{\mathbb R^{2d}}h(y)\,d\phi_t(y)
=
\int_{\mathbb R^{2d}}
h(y)f_0(\Psi_t^{-1}(y))\,dy.
\]
Therefore
\[
\phi_t=f_t\,\mathcal L^{2d},
\qquad
f_t(y)=f_0(\Psi_t^{-1}(y)).
\]
Hence $f$ satisfies the PDE
\[
\partial_t f+
\operatorname{div}_{(x,v)}(Bf)=0
\]
in the distributional sense. Since $\operatorname{div}_{(x,v)}B=0,$ this is equivalent to
\[
\partial_t f
+
v\cdot\nabla_x f
+
K_{\pi_0}(t,x)\cdot\nabla_v f
=0.
\]
This indicates that $B$ is divergence free, and the above equation stems from 
\[
\partial_{t}f+B\cdot\nabla_{x,v}f=0.
\]
This is an equivalent construction from \cite{ambrosio2014lagrangian} where the authors observe that the Vlasov-Poisson system has a transport strucuture which is given exactly by the above equation.

Finally, since \(\Psi_t\) preserves Lebesgue measure, we have the following $L^{p}$-norm equality
\[
\|f_t\|_{L^p}^{p}
=
\int_{\mathbb R^{2d}}
|f_0(\Psi_t^{-1}(z))|^p\,dz
=
\int_{\mathbb R^{2d}}
|f_0(z)|^p\,dz
\]
for $1\leq p<\infty$, and the same change-of-variables argument gives $\|f_t\|_{L^\infty}
=
\|f_0\|_{L^\infty}.$
This proves the theorem.
\end{proof}

In particular, this result offers a partial answer to the following question: Given $\phi_{t}$ as Theorem \ref{thm:gaussian-density-propagation}, when can we say (1) there is a vector field $K:\mathbb{R}^{n}\to \mathbb{R}^{n}$  and a function $f(x,v,t)$ such that 
\[
\phi_{t}=f(\cdot, \cdot, t)dxdv\quad\forall t
\]
and $f$ solves (in some weak sense )
\begin{align*}
\partial_{t}f(x,v,t)+v\cdot\nabla_{x}f(x,v,t)+K(x)\cdot\nabla_{v}f(x,v,t)=0.
\end{align*}
However, what this theorem answers is the density/PDE aspect of the question more directly, with one important caveat: it merely applies to the time-dependent self-consistent force $K_{\pi_0}(t,x)$; not $K(x)$.

For the Gaussian kernel, the force in \eqref{eq:measure-liouville} is
\begin{equation}
\label{eq:gaussian-self-consistent-force}
K_{\pi_0}(t,x)
=
-\nabla_xV(t,x)
-
2\lambda\beta
\int_{\mathbb R^d}
(x-y)e^{-\beta|x-y|^2}\,d\mu_t(y),
\qquad
\mu_t=(p_x)_{\#}\phi_t.
\end{equation}
Consequently,
\begin{equation}
\label{eq:gaussian-kinetic-system}
\begin{cases}
\displaystyle
\partial_t\phi_t
+
\operatorname{div}_x(v\phi_t)
+
\operatorname{div}_v\!\left(K_{\pi_0}(t,x)\phi_t\right)=0,
\\[2mm]
\displaystyle
K_{\pi_0}(t,x)
=
-\nabla_xV(t,x)
-
2\lambda\beta
\int_{\mathbb R^d}
(x-y)e^{-\beta|x-y|^2}\,d\mu_t(y),
\\[3mm]
\displaystyle
\mu_t=(p_x)_{\#}\phi_t.
\end{cases}
\end{equation}
Thus the Gaussian path-space minimizer generates a measure-valued
solution of a nonlinear, nonlocal kinetic equation.

\begin{remark}
\label{rem:no-phase-density}
Theorem \ref{thm:path-implies-liouville} is deliberately stated for
probability measures.  Even if the initial spatial measure is
absolutely continuous, a deterministic family
\[
x\longmapsto
(\gamma_x(t),\dot\gamma_x(t))
\]
maps a $d$-dimensional parameter space into the $2d$-dimensional
phase space.  Its image measure may therefore be singular with respect
to $\mathcal L^{2d}$.  Consequently, the representation, a differential measure with respect to Lebesgue measure,
\[
d\phi_t(x,v)=f(t,x,v)\,dx\,dv
\]
must not be assumed without an additional hypothesis.
\end{remark}

\subsection{The converse lifting and superposition principle}

We now formulate the converse statement in the appropriate phase-space
setting. At each time $t$, a probability measure on trajectories induces
a distribution of particles by evaluation at time $t$. Namely, starting from a path measure $\pi$, not necessarily optimal, the phace-space distribution at time $t$ is obtained through prjection of path-space $\phi_{t}=(e_{t},\dot{e}_{t})_{\#}\pi$. Conversely, given a dynamical family of probability measures in the phase-space, $\{\phi_{t}\}_{t\in[0,1]}$, seek to reconstruct a probability measure on trajectories whoe time marginals match with $\phi_{t}$. That is starting
from an evolution of phase-space distributions, we seek to represent that
evolution as a superposition of individual characteristic curves.

It is important to distinguish the measure arising in this superposition
principle from the endpoint coupling introduced previously. The end-point coupling introduced previously is $\eta^{*}:=(e_{0}, e_{1})_{\#}\pi_{0}=(\text{id}, T^{*})_{\#}\mu\in\mathcal{P}(\mathbb{R}^{2d})$, and records only the initial and final positions of the trajectories under the optimal path measure $\pi_{0}$. By contrast, the superposition principle of the dynamics of curves in euclidean space, produces a measure
 $\eta_{\mathrm{ph}}$. Thus,
\[
\eta_{\mathrm{ph}}
\in
\mathcal P\bigl(AC([0,1];\mathbb R^{2d})\bigr)
\]
is a probability measure on phase-space trajectories
\[
Z(\cdot)=(X(\cdot),V(\cdot)).
\]
Thus $\eta^{*}$ is the end-point coupling, while $\eta_{\textbf{ph}}$ is a probability measure on position-velocity trajectories.

For each $t\in [0,1 ]$, the phase-space superposition principle, define the evaluation map
\[
\operatorname{ev}_t:
AC([0,1];\mathbb R^{2d})
\longrightarrow
\mathbb R^{2d}
\]
by
\[
\operatorname{ev}_t(Z)
=
Z(t)
=
\bigl(X(t),V(t)\bigr).
\]
Thus, the identity
\begin{align}\label{eq:phase-superposition-marginal}
(\operatorname{ev}_t)_{\#}\eta_{\mathrm{ph}}
=
\phi_t
\end{align}
means that the time-$t$ marginal of the probability measure
$\eta_{\mathrm{ph}}$ on phase-space trajectories is precisely the
phase-space distribution $\phi_t$. In other words, the above measure identiy \emph{is} the time-$t$ marginal of the superposition measure which precisely presrcibes pahse-space distribution. After estabilishing the required regularity of the charactersistcs, we project $\eta_{\textbf{ph}}$ onto its spatial component by setting $P(X, V)$ and $\pi:=P_{\#}\eta_{\textbf{ph}}$.

The resulting path measure $\pi$ is not asserted to be optimal; rather, it realzes the prescribed phase-space evolution in the sense that $(e_{t}, \dot{e}_{t})_{\#}\pi=\phi_{t}$ for each $t\in [0, 1]$.

We now pass from the Eulerian phase-space description to a Lagrangian representation by characteristics. Let \[ t\longmapsto \phi_t\in\mathcal P(\mathbb R^{2d}) \] be weakly continuous, and let \[ K:(0,1)\times\mathbb R^d\longrightarrow\mathbb R^d \] be Borel measurable. We associate to $K$ the phase-space vector field \begin{equation} \label{eq:phase-vector-field} B(t,x,v):=(v,K(t,x)). \end{equation} We assume the integrability condition \begin{equation} \label{eq:superposition-integrability} \int_0^1\int_{\mathbb R^{2d}} \bigl(|v|+|K(t,x)|\bigr) \,d\phi_t(x,v)\,dt<\infty, \end{equation} and that $\phi$ solves the phase-space continuity equation \begin{equation} \label{eq:phase-continuity-assumed} \partial_t\phi_t + \operatorname{div}_{(x,v)} \bigl(B(t,x,v)\phi_t\bigr) =0 \end{equation} in the sense of distributions. The characteristics associated with \eqref{eq:phase-vector-field} are phase-space curves \[ Z(t)=(X(t),V(t)) \] satisfying \begin{equation} \label{eq:characteristic-system} \dot X(t)=V(t), \qquad \dot V(t)=K(t,X(t)) \quad\text{for a.e. }t\in(0,1). \end{equation} For a phase-space curve $Z=(X,V)$, we denote its evaluation at time $t$ by \[ \operatorname{ev}_t(Z) = Z(t) = (X(t),V(t)). \] The superposition principle applied to \eqref{eq:phase-continuity-assumed} now yields the following Lagrangian representation. That is, the following result provides the corresponding lifting of the evolving
phase-space distributions $(\phi_t)_{t\in[0,1]}$ to a probability measure
on characteristic trajectories.

\begin{theorem}
\label{thm:phase-space-superposition}
Let $t\mapsto\phi_t\in\mathcal P(\mathbb R^{2d})$ be weakly
continuous and suppose that $\phi$ satisfies
\eqref{eq:phase-continuity-assumed} with the phase-space vector field
\eqref{eq:phase-vector-field}, under the integrability condition
\eqref{eq:superposition-integrability}.
Then there exists
\[
\eta_{\mathrm{ph}}
\in\mathcal P\bigl(AC([0,1];\mathbb R^{2d})\bigr)
\]
such that
\[
(\operatorname{ev}_t)_\#\eta_{\mathrm{ph}}=\phi_t,
\qquad t\in[0,1],
\]
and $\eta_{\mathrm{ph}}$ is concentrated on solutions of
\eqref{eq:characteristic-system}.
\end{theorem}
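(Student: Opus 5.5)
The plan is to reduce the statement to the Ambrosio superposition principle for continuity equations on $\mathbb R^{N}$ with $N=2d$, applied to the Borel vector field $B(t,z)=(v,K(t,x))$, $z=(x,v)$. That theorem (see Ambrosio--Gigli--Savaré, Theorem 8.2.1, or Ambrosio's Lagrangian formulation cited above as \cite{ambrosio2014lagrangian}) asserts the following. Suppose $t\mapsto\phi_t$ is a narrowly continuous distributional solution of $\partial_t\phi_t+\operatorname{div}(B\phi_t)=0$ and
\[
\int_0^1\int|B(t,z)|\,d\phi_t\,dt<\infty .
\]
Then there is a probability measure on $C([0,1];\mathbb R^N)$, concentrated on absolutely continuous solutions of $\dot Z=B(t,Z)$, whose time marginals are $\phi_t$. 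The proof therefore consists of checking these hypotheses in the present setting and translating the conclusion.

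\textbf{Checking the hypotheses.} The integrability hypothesis follows immediately from \eqref{eq:superposition-integrability}, because $|B(t,x,v)|\le |v|+|K(t,x)|$. Weak continuity of $\phi_t$ is assumed, and \eqref{eq:phase-continuity-assumed} is exactly the continuity equation for $B$. Since $K$ is only Borel, no flow or uniqueness is available. This is why I will invoke the general (non-Lipschitz) version rather than the flow-based argument used in Theorem~\ref{thm:gaussian-density-propagation}.

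\textbf{Sketch of the superposition step, to be included for self-containedness.} I would follow the standard three-stage route.
\begin{enumerate}
\item Mollify in $z$: set $\phi^\varepsilon_t=\phi_t*\rho_\varepsilon$ and $B^\varepsilon=(B\phi_t)*\rho_\varepsilon/\phi^\varepsilon_t$. Then $B^\varepsilon$ is locally Lipschitz, satisfies $\partial_t\phi^\varepsilon+\operatorname{div}(B^\varepsilon\phi^\varepsilon)=0$, and, by Jensen's inequality,
\[
\int|B^\varepsilon|\,d\phi^\varepsilon_t\le\int|B|\,d\phi_t .
\]
\item Set $\eta^\varepsilon=(z\mapsto Z^\varepsilon_z)_\#\phi^\varepsilon_0$, where $Z^\varepsilon_z$ is the flow of $B^\varepsilon$. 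Tightness in $C([0,1])$ comes from two facts: the time marginals are tight, and the functional $\int_0^1\theta(|\dot Z|)\,dt$ (with $\theta$ superlinear, chosen by de la Vallée-Poussin from the integrability of $|B|$) is uniformly integrable. This gives equi-absolute-continuity.
\item Pass to a limit $\eta$. Show that $\eta$ is concentrated on curves with
\[
Z(t)=Z(0)+\int_0^tB(s,Z(s))\,ds .
\]
The method is to prove
\[
\int\Bigl|Z(t)-Z(0)-\int_0^tB(s,Z(s))\,ds\Bigr|\,d\eta=0 .
\]
Replace $B$ by a continuous $\tilde B$ close in $L^1(\phi_t\,dt)$, which is possible because this space is a finite measure space. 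Then use the fact that $\eta$ and $\eta^\varepsilon$ have the marginals $\phi_t$ and $\phi^\varepsilon_t$ to control the error terms uniformly.
\end{enumerate}

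I expect step 3 to be the main obstacle: the identification of the limit trajectories as integral curves of a merely Borel field. The approximation by continuous fields must be done against the marginals, not pathwise. Within it, the delicate estimate is
\[
\int|B^\varepsilon-\tilde B|\,d\phi_t^\varepsilon\le\int|B-\tilde B|\,d\phi_t+o(1),
\]
again via Jensen's inequality and the convolution structure.

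\textbf{Translating the conclusion.} Write $Z=(X,V)$. The integral identity componentwise gives, for a.e.\ $t$ and $\eta_{\mathrm{ph}}$-a.e.\ curve,
\[
\dot X=V,\qquad \dot V=K(t,X),
\]
which is \eqref{eq:characteristic-system}. The marginal identity $(\operatorname{ev}_t)_\#\eta_{\mathrm{ph}}=\phi_t$ holds for every $t$, not only a.e., by weak continuity of both sides. For the remark preceding the theorem, note that on such curves $X\in C^1$ with $\dot X=V$. Hence $(e_t,\dot e_t)\circ P=\operatorname{ev}_t$, and $\pi=P_\#\eta_{\mathrm{ph}}$ satisfies \eqref{eq:phase-marginal-from-path-measure}.
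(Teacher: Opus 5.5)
Your proposal is correct and follows the paper's route. Both reduce the statement to the superposition principle for the Borel field $B=(v,K(t,x))$ on $\mathbb R^{2d}$, check the integrability hypothesis via $|B|\le |v|+|K|$, and read off $\dot X=V$, $\dot V=K(t,X)$ componentwise; the only difference is that you also sketch the mollification, tightness and identification proof of that principle (including the de la Vallée-Poussin superlinear function needed for mere $L^1$ integrability), whereas the paper simply cites it.
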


\begin{proof}
Apply the superposition principle for the continuity equation on
$\mathbb R^{2d}$ to the weakly continuous solution
$(\phi_t)_{t\in[0,1]}$ of
\eqref{eq:phase-continuity-assumed}, with the Borel vector field
$B$ defined by \eqref{eq:phase-vector-field}. Assumption
\eqref{eq:superposition-integrability} gives precisely the required
space-time integrability:
\[
\int_0^1
\int_{\mathbb R^{2d}}
|B(t,x,v)|
\,d\phi_t(x,v)\,dt
<\infty,
\]
up to the equivalence of norms on $\mathbb R^{2d}$.

The superposition principle therefore yields a probability measure
\[
\eta_{\mathrm{ph}}
\in
\mathcal P\bigl(AC([0,1];\mathbb R^{2d})\bigr)
\]
such that
\[
(\operatorname{ev}_t)_{\#}\eta_{\mathrm{ph}}
=
\phi_t
\]
for every $t\in[0,1]$, and such that
$\eta_{\mathrm{ph}}$ is concentrated on integral curves of $B$.
Writing such a curve as
\[
Z=(X,V),
\]
we obtain
\[
\dot Z(t)
=
B(t,Z(t))
=
\bigl(V(t),K(t,X(t))\bigr)
\]
for almost every $t$. Equivalently,
\[
\dot X(t)=V(t),
\qquad
\dot V(t)=K(t,X(t))
\]
for almost every $t$.

We next verify the additional regularity. From
\eqref{eq:phase-superposition-marginal} and Tonelli's theorem, we have 
\begin{align*}
\int_{AC([0,1];\mathbb R^{2d})}
\left(
\int_0^1 |K(t,X(t))|\,dt
\right)
d\eta_{\mathrm{ph}}(X,V)
 =
\int_0^1
\int_{\mathbb R^{2d}}
|K(t,x)|
\,d\phi_t(x,v)\,dt
<\infty.
\end{align*}
Hence
\[
K(\cdot,X(\cdot))\in L^1(0,1;\mathbb R^d)
\]
for $\eta_{\mathrm{ph}}$-almost every trajectory.

Since
\[
\dot V(t)=K(t,X(t))
\]
almost everywhere, it follows that
\[
V\in AC([0,1];\mathbb R^d).
\]
In particular, $V$ is continuous. Moreover,
\[
X(t)
=
X(0)+\int_0^t V(s)\,ds,
\]
and therefore
\[
X\in C^1([0,1];\mathbb R^d)
\]
with
\[
\dot X(t)=V(t)
\]
for every $t\in[0,1]$.

Now define the following path measure, not necessarily optimal, 
\[
\pi:=P_{\#}\eta_{\mathrm{ph}},
\qquad
P(X,V)=X.
\]
For every bounded Borel function
\[
\zeta:\mathbb R^{2d}\longrightarrow\mathbb R,
\]
we have
\begin{align*}
\int_{\mathbb R^{2d}}
\zeta(x,v)\,
d\bigl((e_t,\dot e_t)_{\#}\pi\bigr)(x,v)
&=
\int
\zeta\bigl(X(t),\dot X(t)\bigr)
\,d\eta_{\mathrm{ph}}(X,V)
\\
&=
\int
\zeta\bigl(X(t),V(t)\bigr)
\,d\eta_{\mathrm{ph}}(X,V)
\\
&=
\int_{\mathbb R^{2d}}
\zeta(x,v)\,d\phi_t(x,v),
\end{align*}
where the last equality follows from $(\operatorname{ev}_t)_{\#}\eta_{\mathrm{ph}}
=
\phi_t.$
Hence $(e_t,\dot e_t)_{\#}\pi
=
\phi_t.$

Finally, projecting onto the spatial coordinate gives
\[
(e_t)_{\#}\pi
=
(p_x)_{\#}
\bigl((e_t,\dot e_t)_{\#}\pi\bigr)
=
(p_x)_{\#}\phi_t,
\]
which proves \eqref{eq:spatial-marginal-projection}.
\end{proof}

As a consequence, we have the recovery of the path measure.

\begin{corollary}
\label{cor:path-measure-recovery}
Let $\eta_{\mathrm{ph}}$ be given by
Theorem~\ref{thm:phase-space-superposition}. Then its spatial
trajectories are of class $C^1$, and the path measure
\[
\pi:=P_\#\eta_{\mathrm{ph}}
\]
satisfies \eqref{eq:phase-marginal-from-path-measure}.
In particular,
\begin{equation}
\label{eq:spatial-marginal-projection}
(e_t)_\#\pi=(p_x)_\#\phi_t.
\end{equation}
\end{corollary}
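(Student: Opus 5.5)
The corollary follows by reading off the regularity and marginal computations from Theorem~\ref{thm:phase-space-superposition}; it needs no new construction. I would first fix a full-measure set $\mathcal G\subset AC([0,1];\mathbb R^{2d})$ of trajectories $Z=(X,V)$ on which two things hold. The first is the characteristic system \eqref{eq:characteristic-system} for a.e.\ $t$. The second is $K(\cdot,X(\cdot))\in L^1(0,1;\mathbb R^d)$. The latter comes from Tonelli's theorem combined with \eqref{eq:phase-superposition-marginal} and the integrability hypothesis \eqref{eq:superposition-integrability}:
\[
\int\!\!\int_0^1|K(t,X(t))|\,dt\,d\eta_{\mathrm{ph}}=\int_0^1\!\!\int_{\mathbb R^{2d}}|K(t,x)|\,d\phi_t\,dt<\infty .
\]

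On $\mathcal G$, $V$ is absolutely continuous and hence continuous. The identity $X(t)=X(0)+\int_0^tV(s)\,ds$ then holds for every $t$, because $X$ is absolutely continuous with $\dot X=V$ a.e. The fundamental theorem of calculus for continuous integrands gives $X\in C^1([0,1];\mathbb R^d)$ with $\dot X(t)=V(t)$ for \emph{every} $t\in[0,1]$, not just almost every $t$. Since $\eta_{\mathrm{ph}}(\mathcal G)=1$, the measure $\pi=P_\#\eta_{\mathrm{ph}}$ is concentrated on $C^1$ paths. As a result the map $(e_t,\dot e_t)$ is defined $\pi$-a.e. at every fixed time, and it is Borel on $C^1([0,1];\mathbb R^d)$ because it is continuous in the $C^1$ topology. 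This is the step where care is needed, since a merely a.e.\ identification $\dot X=V$ would not allow evaluation at a fixed $t$.

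Next I would verify \eqref{eq:phase-marginal-from-path-measure}. On $\mathcal G$ we have $(e_t,\dot e_t)\circ P=\operatorname{ev}_t$ for every $t$. For bounded Borel $\zeta$ this gives
\[
\int\zeta\,d\bigl((e_t,\dot e_t)_\#\pi\bigr)=\int\zeta\bigl(X(t),\dot X(t)\bigr)\,d\eta_{\mathrm{ph}}=\int\zeta\bigl(X(t),V(t)\bigr)\,d\eta_{\mathrm{ph}}=\int\zeta\,d\phi_t .
\]
The last equality is \eqref{eq:phase-superposition-marginal}.

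Finally, \eqref{eq:spatial-marginal-projection} follows from $p_x\circ(e_t,\dot e_t)=e_t$ and functoriality of push-forwards:
\[
(e_t)_\#\pi=(p_x)_\#(e_t,\dot e_t)_\#\pi=(p_x)_\#\phi_t .
\]

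The only genuine obstacle is the measurability and full-measure bookkeeping. The exceptional set of $\eta_{\mathrm{ph}}$ must be chosen independently of $t$, so that the identities hold simultaneously for all $t\in[0,1]$. This is ensured by deriving the pathwise $L^1$ bound and the characteristic equation once per trajectory, rather than once per time.
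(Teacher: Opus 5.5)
Your proposal is correct and follows the paper's proof essentially step for step. It uses the same trajectory-wise full-measure set on which $V$ is continuous, the same upgrade to $X\in C^1$ with $\dot X=V$ at every $t$ via the fundamental theorem of calculus, the same identity $(e_t,\dot e_t)\circ P=\operatorname{ev}_t$ holding $\eta_{\mathrm{ph}}$-a.e., and the same final projection by $p_x$. (As in the paper, the Tonelli bound on $K(\cdot,X(\cdot))$ is actually redundant: $\eta_{\mathrm{ph}}$ is concentrated on $AC([0,1];\mathbb R^{2d})$, so $V$ is already absolutely continuous and $\dot V=K(\cdot,X(\cdot))\in L^1$ automatically.)
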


\begin{proof}
By Theorem~\ref{thm:phase-space-superposition} and
\eqref{eq:superposition-integrability}, we have $K(\cdot,X(\cdot))\in L^1(0,1;\mathbb R^d)$
for $\eta_{\mathrm{ph}}$-a.e. $(X,V)$. Note that $P(X,V)=X.$ Hence
\eqref{eq:characteristic-system} gives
\[
V\in AC([0,1];\mathbb R^d)\subset C([0,1];\mathbb R^d).
\]
Since $\dot X=V$ a.e., it follows that
\[
X\in C^1([0,1];\mathbb R^d),
\qquad
\dot X=V.
\]

Therefore, for every $t\in[0,1]$,
\[
(e_t,\dot e_t)\circ P
=
\operatorname{ev}_t
\qquad
\eta_{\mathrm{ph}}\text{-a.e.},
\]
where $\operatorname{ev}_t(X,V)=(X(t),V(t))$. Thus
\[
(e_t,\dot e_t)_\#\pi
=
(e_t,\dot e_t)_\#P_\#\eta_{\mathrm{ph}}
=
(\operatorname{ev}_t)_\#\eta_{\mathrm{ph}}
=
\phi_t,
\]
which proves \eqref{eq:phase-marginal-from-path-measure}.
Applying the spatial projection $p_x$ gives
\[
(e_t)_\#\pi
=
(p_x)_\#\phi_t,
\]
and hence \eqref{eq:spatial-marginal-projection}.
\end{proof}

\begin{remark}
\label{rem:answer-lifting}
Theorem \ref{thm:phase-space-superposition} gives the appropriate
converse statement.  An arbitrary collection
$\{\phi_t\}$ of one-time probability measures is not enough.
A sufficient dynamical compatibility condition is that the family be
weakly continuous and solve a phase-space continuity equation whose
first component is exactly the velocity variable $v$.  The
superposition principle then reconstructs a probability measure on
position--velocity characteristics and hence, after projection, a
probability measure on $C^1$ position paths.
\end{remark}
\section{Acknowledgments} The author is deeply grateful to his advisor, Nestor Guillen, for introducing this problem and for insightful discussions that took place at The University of Texas at Austin during Spring 2023 and Fall 2023. 

The author also acknowledges the use of \textbf{artificial intelligence} tools such as ChatGPT and Copilot in the preparation of this manuscript: These tools assisted in generating Figure 1, improving the organization of the paper, and helping with literature search and producing exploratory computations.

\bibliography{OTCF}
\bibliographystyle{plain}

\end{document}